\def\etc{{\it etc.\ }}
\def\ie{{\it i.e.\ }}
\def\eg{{\it e.g.\ }}
\def\cf{{\it cf.\ }}
\def\rhs{{\it r.h.s.\ }}
\def\lhs{{\it l.h.s.\ }}
\def\End{\mathop{{\rm End}}\nolimits}
\def\Hom{\mathop{{\rm Hom}}\nolimits}
\def\Ext{\mathop{{\rm Ext}}\nolimits}
\def\deg{ \mathop{{\rm deg}}\nolimits }
\def\max{\mathop{{\rm max}}\nolimits}
\def\End{\mathop{{\rm End}}\nolimits}
\def\inbar{\vrule height1.5ex width.4pt depth0pt}
\def\IC{\relax\,\hbox{$\inbar\kern-.3em{\rm C}$}}
\def\IN{\relax{\rm I\kern-.18em N}}
\def\IQ{\relax\,\hbox{$\inbar\kern-.3em{\rm Q}$}}
\def\IR{\relax{\rm I\kern-.18em R}}
\def\ZZ{\relax{\sf Z\kern-.4em Z}}
\def\etc{{\it etc\/}}
\newtheorem{theorem}{Theorem}[section]
\newtheorem{corollary}[theorem]{Corollary}
\newtheorem{lemma}[theorem]{Lemma}
\newtheorem{remark}[theorem]{Remark}
\newif\if@fewtab\@fewtabtrue
\newif\if@fewtab\@fewtabtrue
\xdef\hourmin{\number\count255} \multiply\count255
\xdef\hourmin{\hourmin:\ifnum\count255<10 0\fi\the\count255}}
\def\ps@draft{\let\@mkboth\@gobbletwo
    \def\@oddhead{}
    \def\@oddfoot
      {\hbox to 7 cm{\footnotesize {\em Draft of \jobname:} \draftdate
       \hfil}\hskip -7cm\hfil\rm\thepage \hfil}
    \def\@evenhead{}\let\@evenfoot\@oddfoot}
\def\ceqno{\global\@fewtabfalse
    \ifcase\@eqcnt \def\@tempa{& & &}\or \def\@tempa{& &}
      \or \def\@tempa{&}
      \or\def\@tempa{}\fi\@tempa
{\rm(\theequation)}}
\def\aeqno#1{\global\@fewtabfalse
    \ifcase\@eqcnt \def\@tempa{& & &}\or \def\@tempa{& &}
      \or \def\@tempa{&}
      \or\def\@tempa{}\fi\@tempa
{\rm(\theequation,#1)}}
\def\label#1{\ifnum\draftcontrol=1
 \global\def\draftnote{$\scriptstyle #1$}\fi
 \@bsphack\if@filesw {\let\thepage\relax
   \def\protect{\noexpand\noexpand\noexpand}%
\xdef\@gtempa{\write\@auxout{\string
      \newlabel{#1}{{\@currentlabel}{\thepage}}}}}\@gtempa
   \if@nobreak \ifvmode\nobreak\fi\fi\fi
  \@esphack}
\def\alabel#1#2{\label{#1}\global\@fewtabfalse
    \ifcase\@eqcnt \def\@tempa{& & &}\or \def\@tempa{& &}
      \or \def\@tempa{&}
      \or\def\@tempa{}\fi\@tempa
{\hbox to 3cm{\phantom{\rm(\theequation,#2)} \draftnote
\hfil}\hskip -3cm {\rm(\theequation,#2)}}}
\def\clabel#1{\label{#1}\global\@fewtabfalse
    \ifcase\@eqcnt \def\@tempa{& & &}\or \def\@tempa{& &}
      \or \def\@tempa{&}
      \or\def\@tempa{}\fi\@tempa
{\hbox to 3cm{\phantom{\rm(\theequation)} \draftnote \hfil}\hskip
-3cm{\rm(\theequation)}}}
\def\eqnarray{\def\draftnote{{}}\global\@fewtabtrue
\stepcounter{equation}\let\@currentlabel=\theequation
\global\@eqnswtrue
\global\@eqcnt\z@\tabskip\@centering\let\\=\@eqncr
$$\halign to \displaywidth\bgroup\@eqnsel\hskip\@centering\@eqcnt\z@
  $\displaystyle\tabskip\z@{##}$&\global\@eqcnt\@ne
  \hskip 1\arraycolsep \hfil$\displaystyle{##}$\hfil
  &\global\@eqcnt\tw@ \hskip 1\arraycolsep
$\displaystyle\tabskip\z@{##}$ \hfil
\tabskip\@centering&\global\@eqcnt\thr@@\llap{##}\tabskip\z@ \cr}
\def\endeqnarray{\@@eqncr\egroup
      \global\advance\c@equation\m@ne$$\global\@ignoretrue}
\def\@eqnnum{\hbox to 3cm{\phantom{\rm(\theequation)} \draftnote
                         \hfil}\hskip -3cm {\rm(\theequation)}}
\def\@@eqncr{\let\@tempa\relax
    \ifcase\@eqcnt \def\@tempa{& & &}\or \def\@tempa{& &}
      \or \def\@tempa{&}
      \or\def\@tempa{}
\fi\@tempa \if@eqnsw \if@fewtab\@eqnnum\fi
\stepcounter{equation}\fi\global
\@eqnswtrue\global\@eqcnt\z@\global\@fewtabtrue\cr}
\def\draftcite#1{\ifnum\draftcontrol=1#1\else{}\fi}
\def\@lbibitem[#1]#2{\item{}\hskip -3cm \hbox to 2cm
{\hfil$\scriptstyle\draftcite{#2}$}\hskip
1cm[\@biblabel{#1}]\if@filesw
     {\def\protect##1{\string ##1\space}\immediate
      \write\@auxout{\string\bibcite{#2}{#1}}}\fi\ignorespaces}
\def\@bibitem#1{\item\hskip -3cm \hbox to 2cm
{\hfil $\scriptstyle\draftcite{#1}$}\hskip 1cm \if@filesw
\immediate\write\@auxout
       {\string\bibcite{#1}{\the\value{\@listctr}}}\fi\ignorespaces}
\def\draftdate{\number\month/\number\day/\number\year\ \ \ \hourmin }
 \global\def\draftcontrol{0}
\def\theequation{{\thesection.\arabic{equation}}}
\def\qq{\begin{eqnarray}}
\def\qqq{\end{eqnarray}}
\def\ee{\begin{eqnarray}}
\def\eee{\end{eqnarray}}
\def\rx#1{~(\ref{#1})}
\def\ex#1{eq.\hspace*{-3pt}\rx{#1}}
\def\xlee#1{ \begin{eqnarray} \label{#1} }
\def\xeee{ \end{eqnarray} }
\def\ylee#1{ \begin{eqnarray}\nonumber }
\def\yeee{ \end{eqnarray} }
\def\zlee#1{ \begin{displaymath} }
\def\zleee{ \end{displaymath} }
\def\wlee#1{ $ }
\newlength{\shiftwidth}
\def\shift#1{&&\hbox to \shiftwidth{\hfill $\displaystyle#1$}}
\newlength{\sshiftwidth}
\def\sshift#1{\lefteqn{\hbox to
\sshiftwidth{\hfill$\displaystyle#1$}}}
\def\qbezier{\bezier{120}}
\def\DottedCircle{
\bezier{4}(0.966,-0.259)(1.04,0)(0.966,0.259)
\bezier{4}(0.966,0.259)(0.897,0.518)(0.707,0.707)
\bezier{4}(0.707,0.707)(0.518,0.897)(0.259,0.966)
\bezier{4}(0.259,0.966)(0,1.04)(-0.259,0.966)
\bezier{4}(-0.259,0.966)(-0.518,0.897)(-0.707,0.707)
\bezier{4}(-0.707,0.707)(-0.897,0.518)(-0.966,0.259)
\bezier{4}(-0.966,0.259)(-1.04,0)(-0.966,-0.259)
\bezier{4}(-0.966,-0.259)(-0.897,-0.518)(-0.707,-0.707)
\bezier{4}(-0.707,-0.707)(-0.518,-0.897)(-0.259,-0.966)
\bezier{4}(-0.259,-0.966)(0,-1.04)(0.259,-0.966)
\bezier{4}(0.259,-0.966)(0.518,-0.897)(0.707,-0.707)
\bezier{4}(0.707,-0.707)(0.897,-0.518)(0.966,-0.259) }
\def\Endpoint[#1]{
\ifcase#1 \put(1,0){\circle*{0.15}}
\or\put(0.866,0.5){\circle*{0.15}}
\or\put(0.5,0.866){\circle*{0.15}} \or\put(0,1){\circle*{0.15}}
\or\put(-0.5,0.866){\circle*{0.15}}
\or\put(-0.866,0.5){\circle*{0.15}} \or\put(-1,0){\circle*{0.15}}
\or\put(-0.866,-0.5){\circle*{0.15}}
\or\put(-0.5,-0.866){\circle*{0.15}} \or\put(0,-1){\circle*{0.15}}
\or\put(0.5,-0.866){\circle*{0.15}}
\or\put(0.866,-0.5){\circle*{0.15}} \fi}
\def\Arc[#1]{
\thicklines         
\ifcase#1 \bezier{25}(0.966,-0.259)(1.04,0)(0.966,0.259) \or
\bezier{25}(0.966,0.259)(0.897,0.518)(0.707,0.707) \or
\bezier{25}(0.707,0.707)(0.518,0.897)(0.259,0.966) \or
\bezier{25}(0.259,0.966)(0,1.04)(-0.259,0.966) \or
\bezier{25}(-0.259,0.966)(-0.518,0.897)(-0.707,0.707) \or
\bezier{25}(-0.707,0.707)(-0.897,0.518)(-0.966,0.259) \or
\bezier{25}(-0.966,0.259)(-1.04,0)(-0.966,-0.259) \or
\bezier{25}(-0.966,-0.259)(-0.897,-0.518)(-0.707,-0.707) \or
\bezier{25}(-0.707,-0.707)(-0.518,-0.897)(-0.259,-0.966) \or
\bezier{25}(-0.259,-0.966)(0,-1.04)(0.259,-0.966) \or
\bezier{25}(0.259,-0.966)(0.518,-0.897)(0.707,-0.707) \or
\bezier{25}(0.707,-0.707)(0.897,-0.518)(0.966,-0.259) \fi}
\def\DottedArc[#1]{
\ifcase#1 \bezier{4}(0.966,-0.259)(1.04,0)(0.966,0.259) \or
\bezier{4}(0.966,0.259)(0.897,0.518)(0.707,0.707) \or
\bezier{4}(0.707,0.707)(0.518,0.897)(0.259,0.966) \or
\bezier{4}(0.259,0.966)(0,1.04)(-0.259,0.966) \or
\bezier{4}(-0.259,0.966)(-0.518,0.897)(-0.707,0.707) \or
\bezier{4}(-0.707,0.707)(-0.897,0.518)(-0.966,0.259) \or
\bezier{4}(-0.966,0.259)(-1.04,0)(-0.966,-0.259) \or
\bezier{4}(-0.966,-0.259)(-0.897,-0.518)(-0.707,-0.707) \or
\bezier{4}(-0.707,-0.707)(-0.518,-0.897)(-0.259,-0.966) \or
\bezier{4}(-0.259,-0.966)(0,-1.04)(0.259,-0.966) \or
\bezier{4}(0.259,-0.966)(0.518,-0.897)(0.707,-0.707) \or
\bezier{4}(0.707,-0.707)(0.897,-0.518)(0.966,-0.259) \fi}
\def\Chord[#1,#2]{
\thinlines \ifnum#1>#2\Chord[#2,#1] \else\ifnum#1<#2 \ifcase#1
\ifcase#2 \or\qbezier(1,0)(0.516,0.138)(0.866,0.5)
\or\qbezier(1,0)(0.45,0.26)(0.5,0.866)
\or\qbezier(1,0)(0.327,0.327)(0,1)
\or\qbezier(1,0)(0.179,0.311)(-0.5,0.866)
\or\qbezier(1,0)(0.0536,0.2)(-0.866,0.5) \or\put(1, 0){\line(-2,
0){2}} \or\qbezier(1,0)(0.0536,-0.2)(-0.866,-0.5)
\or\qbezier(1,0)(0.179,-0.311)(-0.5,-0.866)
\or\qbezier(1,0)(0.327,-0.327)(0,-1)
\or\qbezier(1,0)(0.45,-0.26)(0.5,-0.866)
\or\qbezier(1,0)(0.516,-0.138)(0.866,-0.5) \fi \or\ifcase#2\or
\or\qbezier(0.866,0.5)(0.378,0.378)(0.5,0.866)
\or\qbezier(0.866,0.5)(0.26,0.45)(0,1)
\or\qbezier(0.866,0.5)(0.12,0.446)(-0.5,0.866)
\or\qbezier(0.866,0.5)(0,0.359)(-0.866,0.5)
\or\qbezier(0.866,0.5)(-0.0536,0.2)(-1,0) \or\put(0.866,
0.5){\line(-5, -3){1.73}}
\or\qbezier(0.866,0.5)(0.146,-0.146)(-0.5,-0.866)
\or\qbezier(0.866,0.5)(0.311,-0.179)(0,-1)
\or\qbezier(0.866,0.5)(0.446,-0.12)(0.5,-0.866)
\or\qbezier(0.866,0.5)(0.52,0)(0.866,-0.5) \fi \or\ifcase#2\or\or
\or\qbezier(0.5,0.866)(0.138,0.516)(0,1)
\or\qbezier(0.5,0.866)(0,0.52)(-0.5,0.866)
\or\qbezier(0.5,0.866)(-0.12,0.446)(-0.866,0.5)
\or\qbezier(0.5,0.866)(-0.179,0.311)(-1,0)
\or\qbezier(0.5,0.866)(-0.146,0.146)(-0.866,-0.5) \or\put(0.5,
0.866){\line(-3, -5){1}} \or\qbezier(0.5,0.866)(0.2,-0.0536)(0,-1)
\or\qbezier(0.5,0.866)(0.359,0)(0.5,-0.866)
\or\qbezier(0.5,0.866)(0.446,0.12)(0.866,-0.5) \fi
\or\ifcase#2\or\or\or \or\qbezier(0,1.)(-0.138,0.516)(-0.5,0.866)
\or\qbezier(0,1.)(-0.26,0.45)(-0.866,0.5)
\or\qbezier(0,1.)(-0.327,0.327)(-1,0)
\or\qbezier(0,1.)(-0.311,0.179)(-0.866,-0.5)
\or\qbezier(0,1.)(-0.2,0.0536)(-0.5,-0.866) \or\put(0, 1){\line(0,
-2){2}} \or\qbezier(0,1.)(0.2,0.0536)(0.5,-0.866)
\or\qbezier(0,1.)(0.311,0.179)(0.866,-0.5) \fi
\or\ifcase#2\or\or\or\or
\or\qbezier(-0.5,0.866)(-0.378,0.378)(-0.866,0.5)
\or\qbezier(-0.5,0.866)(-0.45,0.26)(-1,0)
\or\qbezier(-0.5,0.866)(-0.446,0.12)(-0.866,-0.5)
\or\qbezier(-0.5,0.866)(-0.359,0)(-0.5,-0.866)
\or\qbezier(-0.5,0.866)(-0.2,-0.0536)(0,-1) \or\put(-0.5,
0.866){\line(3, -5){1}}
\or\qbezier(-0.5,0.866)(0.146,0.146)(0.866,-0.5) \fi
\or\ifcase#2\or\or\or\or\or
\or\qbezier(-0.866,0.5)(-0.516,0.138)(-1,0)
\or\qbezier(-0.866,0.5)(-0.52,0)(-0.866,-0.5)
\or\qbezier(-0.866,0.5)(-0.446,-0.12)(-0.5,-0.866)
\or\qbezier(-0.866,0.5)(-0.311,-0.179)(0,-1)
\or\qbezier(-0.866,0.5)(-0.146,-0.146)(0.5,-0.866) \or\put(-0.866,
0.5){\line(5, -3){1.73}} \fi \or\ifcase#2\or\or\or\or\or\or
\or\qbezier(-1,0)(-0.516,-0.138)(-0.866,-0.5)
\or\qbezier(-1,0)(-0.45,-0.26)(-0.5,-0.866)
\or\qbezier(-1,0)(-0.327,-0.327)(0,-1)
\or\qbezier(-1,0)(-0.179,-0.311)(0.5,-0.866)
\or\qbezier(-1,0)(-0.0536,-0.2)(0.866,-0.5) \fi
\or\ifcase#2\or\or\or\or\or\or\or
\or\qbezier(-0.866,-0.5)(-0.378,-0.378)(-0.5,-0.866)
\or\qbezier(-0.866,-0.5)(-0.26,-0.45)(0,-1)
\or\qbezier(-0.866,-0.5)(-0.12,-0.446)(0.5,-0.866)
\or\qbezier(-0.866,-0.5)(0,-0.359)(0.866,-0.5) \fi
\or\ifcase#2\or\or\or\or\or\or\or\or
\or\qbezier(-0.5,-0.866)(-0.138,-0.516)(0,-1)
\or\qbezier(-0.5,-0.866)(0,-0.52)(0.5,-0.866)
\or\qbezier(-0.5,-0.866)(0.12,-0.446)(0.866,-0.5) \fi
\or\ifcase#2\or\or\or\or\or\or\or\or\or
\or\qbezier(0,-1.)(0.138,-0.516)(0.5,-0.866)
\or\qbezier(0,-1.)(0.26,-0.45)(0.866,-0.5) \fi
\or\ifcase#2\or\or\or\or\or\or\or\or\or\or
\or\qbezier(0.5,-0.866)(0.378,-0.378)(0.866,-0.5) \fi\fi\fi\fi}
\def\FullChord[#1,#2]{
\Endpoint[#1] \Endpoint[#2] \Arc[#1] \Arc[#2] \Chord[#1,#2] }
\def\EndChord[#1,#2]{
\Endpoint[#1] \Endpoint[#2] \Chord[#1,#2] }
\def\Picture#1{
\begin{picture}(2,1)(-1,-0.167)
#1
\end{picture}
}
\def\DottedChordDiagram[#1,#2]{
\Picture{\DottedCircle \FullChord[#1,#2]} }
\def\ZZ{ \mathbb{Z} }
\def\IQ{ \mathbb{Q} }
\def\IC{ \mathbb{C} }
\def\IR{ \mathbb{R} }
\def\bfx{ \mathbf{x} }
\def\bfy{ \mathbf{y} }
\def\Hom{ \mathop{\mathrm{Hom}}\nolimits }
\def\xId{ \mathbbm{1} }
\def\xIdv#1{ \xId_{#1} }
\def\xIdbstr{ \xIdv{\bstr} }
\def\tmcn{multi-cone}
\def\tmtdg{multi-degree}
\def\xdmm{ - }
\def\Conv#1{ \mathrm{Cone} (#1) }
\def\Hm{ \mathrm{H} }
\def\Hmiev#1{ \Hm_{\mathrm{#1}} }
\def\Hmint{ \Hmiev{in} }
\def\Hmext{ \Hmiev{out} }
\def\dgq{ \deg_{\mathrm{q}} }
\def\dgb{ \deg_{\mathrm{b}} }
\def\dga{ \deg_{\mathrm{a}} }
\def\dgt{ \deg_{\mathrm{t}} }
\def\dgtot{ \deg_{[\mathrm{tot}]} }
\def\tqdgr{$q$-degree}
\def\tadgr{$a$-degree}
\def\Zgrdg{$\ZZ$-grading}
\def\xIdv#1{ \xId_{#1} }
\def\xL{ L }
\def\xLb{ \bar{\xL} }
\def\sgmm{ s }
\def\betbr{ \beta }
\def\xLv#1{ \xL_{#1} }
\def\xLb{ \xLv{\betbr} }
\def\xfLbo{ \xLb }
\def\xfLbmo{ \xLb }
\def\Sgl{Soergel}
\def\Kszl{Koszul}
\def\bmdl{bimodule}
\def\Sbmdl{\Sgl\ \bmdl}
\def\Rq{Rouquier}
\def\Rqc{\Rq\ complex}
\def\tgrd{triply graded}
\def\tghm{\tgrd\ homology}
\def\ftghm{filtered \tghm}
\def\compl{complex}
\def\fcompl{filtered \compl}
\def\lkhm{link homology}
\def\tglkhm{\tgrd\ \lkhm}
\def\HMFp{HOMFLY-PT polynomial}
\def\frmd{framed}
\def\orntd{oriented}
\def\orfr{\orntd\ \frmd}
\def\Hch{Hochschild}
\def\Hchh{\Hch\ homology}
\def\brwd{braid word}
\def\brwdm{\brwd\ monoid}
\def\vrcr{virtual crossing}
\def\SOtN{\mathrm{SO}(2N)}
\def\tSOtN{$\SOtN$}
\def\SUN{\mathrm{SU}(N)}
\def\tSUN{$\SUN$}
\def\brwb{ \beta }
\def\brwbv#1{ \brwb_{#1} }
\def\brwbo{ \brwbv{1} }
\def\brwbt{ \brwbv{2} }
\def\brwbp{ \brwb' }
\def\brwbpp{ \brwb'' }
\def\brdb{ \beta }
\def\brdbv#1{ \brdb_{#1} }
\def\brdbo{ \brdbv{1} }
\def\brdbt{ \brdbv{2} }
\def\sg{ \sigma }
\def\sgn{ \sg^{-1} }
\def\sgv#1{ \sg_{#1} }
\def\sgnv#1{ \sgn_{#1} }
\def\sgi{ \sgv{i} }
\def\sgni{ \sgnv{i} }
\def\sgnio{ \sgnv{i+1} }
\def\sgo{ \sgv{1} }
\def\sgomo{ \sgo^{-1} }
\def\sqbr{ \sqcup }
\def\bstr{ n }
\def\blbn{ \Ngr }
\def\bfx{ \mathbf{x} }
\def\bfy{ \mathbf{y} }
\def\bfz{ \mathbf{z} }
\def\bfw{ \mathbf{w} }
\def\Qv#1{ \IQ[#1] }
\def\Qxyo{ \Qv{x_1,y_1} }
\def\Qz{ \Qv{z} }
\def\Qw{ \Qv{w} }
\def\otQv#1{ \otimes_{\Qv{#1}} }
\def\otQby{ \otQv{\bfy} }
\def\otQbx{ \otQv{\bfx} }
\def\otQbz{ \otQv{\bfz} }
\def\zcn{ \mathrm{cn} }
\def\zfr{ \otimes }
\def\xctv#1{ [\![ #1 ]\!] }
\def\xectv#1{ \left[\!\left[ #1 \right]\!\right] }
\def\zectv#1{ \left[\!\left[ #1 \right]\!\right] }
\def\zectcnv#1{ \zectv{#1}^{\zcn} }
\def\yectv#1{ \left[\!\!\left[ #1 \right]\!\!\right] }
\def\yectcnv#1{ \yectv{#1}^{\zcn}}
\def\yectvv#1#2#3{ \tensor[_{#2}]{\yectv{#1}}{_{#3}} }
\def\xxectvv#1#2#3{ \tensor[_{#2}]{\xectv{#1}}{_{#3}} }
\def\yectfrvv#1#2#3{ \tensor*[_{#2}]{\yectv{#1}}{_{#3}^{
\zfr}} }
\def\yectfrbzx#1{  \yectfrvv{#1}{\bfz}{\bfx} }
\def\yectvv#1#2#3{ \tensor[_{#2}]{\yectv{#1}}{_{#3}} }
\def\yectvxyt#1{ \yectvv{#1}{y_2}{x_2} }
\def\yectvyxt#1{ \yectvv{#1}{x_2}{y_2} }
\def\xctfrv#1{ \xctv{#1}^{\zfr} }
\def\xctcnv#1{ \xctv{#1}^{\zcn } }
\def\xctmv#1{ \xctv{#1}^{-} }
\def\bfxp{ \bfx' }
\def\bfyp{ \bfy' }
\def\xctvv#1#2#3{ {}_{#2}\xctv{#1}_{#3} }
\def\xectvv#1#2#3{ {}_{#2}\xectv{#1}_{#3} }
\def\xctvbyx#1{ \xctvv{#1}{\bfy}{\bfx} }
\def\xctvbyxp#1{ \xctvv{#1}{\bfyp}{\bfxp} }
\def\xctvbxy#1{ \xctvv{#1}{\bfx}{\bfy} }
\def\xctvbzx#1{ \xctvv{#1}{\bfz}{\bfx} }
\def\xctvbzy#1{ \xctvv{#1}{\bfz}{\bfy} }
\def\xctvbwz#1{ \xctvv{#1}{\bfw}{\bfz} }
\def\xctvbwx#1{ \xctvv{#1}{\bfw}{\bfx} }
\def\xctvbyxo#1{ \xctvv{#1}{y_1}{x_1} }
\def\xctvbyxt#1{ \xctvv{#1}{y_2}{x_2} }
\def\ctsgi{ \xctv{\sgi} }
\def\ctsgni{ \xctv{\sgni} }
\def\ctbrwb{ \xctv{\brwb} }
\def\ctbrwbo{ \xctv{\brwbo} }
\def\ctbrwbt{ \xctv{\brwbt} }
\def\xctcnvv#1#2#3{ {}_{#2}\xctcnv{#1}_{#3} }
\def\xctcnvbyx#1{ \xctcnvv{#1}{\bfy}{\bfx} }
\def\ctdm{ \xctv{\dmmy} }
\def\xLbv#1{ \xL_{#1} }
\def\xLbb{ \xLbv{\brwb} }
\def\xLbbo{ \xLbv{\brwbo} }
\def\xLbbt{ \xLbv{\brwbt} }
\def\xLbbp{ \xLbv{\brwbp} }
\def\xLbbpp{ \xLbv{\brwbpp} }
\def\xLsgmtn{ \xLbv{\ysgmmtn} }
\def\xLsgmtno{ \xLbv{\ysgmmtno} }
\def\xLsgmtnmo{ \xLbv{\ysgmmtnmo} }
\def\ctxLbb{ \xctv{\xLbb} }
\def\ctxLbbo{ \xctv{\xLbbo} }
\def\ctxLbbt{ \xctv{\xLbbt} }
\def\ctxLbbp{ \xctv{\xLbbp} }
\def\ctxLbbpp{ \xctv{\xLbbpp} }
\def\ctxtLbb{ \xctv{\xtLbb} }
\def\ctxtLbbxyo{ \xctvbyxo{\xtLbb} }
\def\ctxtLbbxyt{ \xctvbyxt{\xtLbb} }
\def\ctbrwbbxy{ \xctvbyx{\brwb} }
\def\xtLbv#1{ \widetilde{\xL}_{#1} }
\def\xtLbb{ \xtLbv{\brwb} }
\def\HHm{ \Hm\Hm }
\def\xIdbrn{ \xIdv{\bstr} }
\def\xIdbro{ \xIdv{1} }
\def\Rdt{R2}
\def\Rdta{\Rdt a}
\def\Rdtb{\Rdt b}
\def\Rdth{R3}
\def\Mo{M1}
\def\Mta{M2a}
\def\Mtb{M2b}
\def\Rmv{Reidemeister move}
\def\sRm{second \Rmv}
\def\tRm{third \Rmv}
\def\het{homotopy equivalent}
\def\hec{homotopy equivalence}
\def\fhet{filtered homotopy equivalent}
\def\fhec{filtered homotopy equivalence}
\def\mfltd{multi-filtered}
\def\nfltd{$\Ngr$-filtered}
\def\smspl{semi-split}
\def\mfspl{\mfltd\ \smspl}
\def\nfspl{\nfltd\ \smspl}
\def\mcpl{multi-complex}
\def\Htg{ \Hm^{(3)} }
\def\dmmy{ - }
\def\lkeq{ = }
\def\ccv#1{#1}
\def\ccA{ \ccv{A} }
\def\ccB{ \ccv{B} }
\def\fhmvv#1#2{ f_{#1#2} }
\def\fAB{\fhmvv{\ccA}{\ccB} }
\def\fBA{\fhmvv{\ccB}{\ccA} }
\def\hmv#1{ h_{#1} }
\def\hmA{ \hmv{\ccA} }
\def\hmB{ \hmv{\ccB} }
\def\FAB{ F_{AB} }
\def\FBA{ F_{BA} }
\def\HmA{ H_A }
\def\HmB{ H_B }
\def\dcv#1{ d_{#1} }
\def\dcA{ \dcv{\ccA} }
\def\dcB{ \dcv{\ccB} }
\def\cwdv#1#2{ (#1,#2) }
\def\cwdA{ \cwdv{\ccA}{\dcA} }
\def\cwdB{ \cwdv{\ccB}{\dcB} }
\def\xIA{ \xIdv{\ccA} }
\def\xIB{ \xIdv{\ccB} }
\def\ctC{ \mathcal{C} }
\def\nmv#1{ |#1| }
\def\Qv#1{ \IQ[#1] }
\def\Qbxy{ \Qv{\bfx,\bfy} }
\def\Qbx{ \Qv{\bfx} }
\def\Qby{ \Qv{\bfy} }
\def\Qxy{ \Qv{x,y} }
\def\Qmxy{ \Qv{\xm,\ym} }
\def\gq{q}
\def\ga{a}
\def\qgrdg{$\gq$-grading}
\def\agrdg{$\ga$-grading}
\def\xdg{ \mathrm{deg} }
\def\xdgv#1{ \xdg_{#1} }
\def\xdgq{ \xdgv{\gq} }
\def\xdga{ \xdgv{\ga} }
\def\qgrdd{$\gq$-graded}
\def\Zgrdd{$\ZZ$-graded}
\def\mcat#1{ \mathbf{#1} }
\def\Drv{ \mcat{D} }
\def\ctCh{ \mcat{Ch} }
\def\ctK{ \mcat{K} }
\def\ctChv#1{ \ctCh(#1) }
\def\ctKv#1{ \ctK(#1) }
\def\ctChuv#1{ \ctCh^{[#1]} }
\def\ctKuv#1{ \ctK^{[#1]} }
\def\Chum{ \ctChuv{\blbn} }
\def\Kum{ \ctKuv{\blbn} }
\def\Chuo{ \ctChuv{1} }
\def\Kuo{ \ctKuv{1} }
\def\ChumA{ \Chum(\ctA) }
\def\KumA{ \Kum(\ctA) }
\def\ChuoA{ \Chuo(\ctA) }
\def\ctChbuv#1{ \ctCh^{#1} }
\def\Chbum{ \ctChbuv{\blbn} }
\def\ChbumA{ \Chbum(\ctA) }
\def\ctA{ \mcat{A} }
\def\ctsg{ \mcat{sg} }
\def\ctsgv#1{ \ctsg^{#1} }
\def\ctsgvv#1#2{ #1-\ctsgv{#2} }
\def\ctsgmv#1{ \ctsgvv{#1}{\blbn} }
\def\ctsgov#1{ \ctsgvv{#1}{1} }
\def\ctsgcv#1{ #1-\ctsg }
\def\cfmod{ \mcat{fm} }
\def\cfmod{ \mcat{fi,g} }
\def\cfree{ \mcat{fr,g} }
\def\ctpt{ \mcat{Pt} }
\def\DQfv#1{ \Drv(\Qv{#1}-\cfmod) }
\def\DQfbxy{ \DQfv{\bfx,\bfy} }
\def\ChDQfbxy{ \ctChv{\DQfbxy} }
\def\KQfb{ \ctKv{\IQ-\cfmod}}
\def\ChQfb{ \ctChv{\IQ-\cfmod}}
\def\ChKQfb{ \ctChv{\KQfb}}
\def\ChFnA{ \ChumA }
\def\ChA{ \ctChv{\ctA} }
\def\Qvfr#1{ \Qv{#1}-\cfree }
\def\Qbxyfr{ \Qvfr{\bfx,\bfy} }
\def\ChFnQbxyfr{ \Chum\bigl(\Qbxyfr\bigr) }
\def\KFnQbxyfr{ \Kum\bigl(\Qbxyfr\bigr) }
\def\ChFoQbxyfr{ \Chuo\bigl(\Qbxyfr\bigr) }
\def\KFoQbxyfr{ \Kuo\bigl(\Qbxyfr\bigr) }
\def\ChbmFnQbxyfr{ \Chbum\bigl(\ChFnQbxyfr\bigr) }
\def\ChboFnQbxyfr{ \ctCh\bigl(\ChFnQbxyfr\bigr) }
\def\xcpAp{ B }
\def\FnA{ \ctsgmv{\ctA} }
\def\FA{ \ctsgcv{\ctA} }
\def\FoA{ \ctsgov{\ctA} }
\def\ptob#1{ (#1) }
\def\pto{ \ptob{1} }
\def\ptt{ \ptob{2} }
\def\ptn{ \ptob{\bstr} }
\def\ptnv#1{ \ptob{\bstr_{#1} } }
\def\ptno{ \ptnv{1} }
\def\ptnt{ \ptnv{2} }
\def\ptpr{ \sqcup }
\def\Eptv#1{ \End\ptob{#1} }
\def\Eptn{ \Eptv{\bstr} }
\def\Epto{ \Eptv{1} }
\def\Eptt{ \Eptv{2} }
\def\Smgv#1{ \mathrm{S}_{#1} }
\def\Smgn{ \Smgv{\bstr} }
\def\Smgt{ \Smgv{2} }
\def\tgct{target category}
\def\tgcts{target categories}
\def\Idpto{ \xIdv{\pto} }
\def\Idptn{ \xIdv{\ptn} }
\def\ene{\epsilon}
\def\grnv#1{ \mathbf{#1} }
\def\sha{ \grnv{a}}
\def\shq{ \grnv{q}}
\def\sht{ \grnv{t} }
\def\shs{ \grnv{s} }
\def\shst{ \shs^2 }
\def\shsh{ \shs^3 }
\def\gnst{ s }
\def\gnstp{ \gnst' }
\def\gnstv#1{ \gnst_{#1} }
\def\gnstot{ \gnstv{12} }
\def\gnstij{ \gnstv{ij} }
\def\gnsti{ \gnstv{i} }
\def\vert{ v }
\def\vertp{ \vert' }
\def\gnstvert{ \gnstv{\vert} }
\def\gnstvertp{ \gnstv{\vertp} }
\def\wert{ v }
\def\wertp{ \vert' }
\def\edg{ e }
\def\gnstev#1{ \gnstv{(#1)} }
\def\gnstedg{ \gnstev{\edg} }
\def\tsf{semi-free}
\def\tfsf{filtered \tsf}
\def\xFl{\mathrm{F}}
\def\xFlv#1{ \xFl_{#1} }
\def\xFli{ \xFlv{i} }
\def\xFlz{ \xFlv{0} }
\def\xFlo{ \xFlv{1} }
\def\xQl{\mathrm{Q}}
\def\xFlmv#1{ \xFl^{(#1)} }
\def\xFlmi{ \xFlmv{i} }
\def\xFlmimo{ \xFlmv{i-1} }
\def\xFlmvv#1#2{ \xFlmv{#1}_{#2} }
\def\xFlmij{ \xFlmvv{i}{j} }
\def\xQlmv#1{ \xQl^{(#1)} }
\def\xQlmvv#1#2{ \xQlmv{#1}_{#2} }
\def\xQlmij{ \xQlmvv{i}{j} }
\def\tot{ \mathrm{tot} }
\def\xFlmtotv#1{ \xFlmvv{\tot}{#1} }
\def\xFlmtoti{ \xFlmtotv{i} }
\def\xFlmtotio{ \xFlmtotv{i+1} }
\def\xGl{\mathrm{G}}
\def\xGlmv#1{ \xGl^{(#1)} }
\def\xGlmvv#1#2{ \xGlmv{#1}_{#2} }
\def\xGlmtotv#1{ \xGlmvv{\tot}{#1} }
\def\xGlmtoti{ \xGlmtotv{i} }
\def\blb{blob}
\def\blgn{\blb\ generator}
\def\blbm{\blb\ \bmdl}
\def\Prr{\mathrm{P}}
\def\Prrv#1{ \Prr(#1) }
\def\mcan{ \mathrm{cn} }
\def\Prcnxv#1{ #1^{\mathrm{cn}} }
\def\xCbv#1{ \bigl[ #1 \bigr] }
\def\xCsv#1{ \left[ #1 \right] }
\def\fres{filtered resolution}
\def\cres{canonical resolution}
\def\cfres{canonical \fres}
\def\xpar{ \shortparallel }
\def\xvcr{\!\times }
\def\xblb{\text{\textcurrency}}
\def\mMv#1{ M_{#1} }
\def\mMblb{ \mMv{\xblb} }
\def\tvsd{virtual saddle}
\def\vsd{ \phi }
\def\vsdv#1{ \vsd_{#1} }
\def\vsdp{ \vsdv{\xpar} }
\def\vsdm{ \vsdv{\xvcr} }
\def\vsde{ \vsdv{(\edg)} }
\def\vsdi{ \vsdv{i} }
\def\vsdvv#1#2{ \vsd_{#1}^{(#2)}}
\def\vsdijs{ \vsdvv{ij}{\gnst}}
\def\dgb{ \Delta }
\def\dgbv#1#2{ {}_{#1}\dgb_{#2} }
\def\dgbyx{ \dgbv{y}{x} }
\def\dgbxyt{ \dgbv{y_2}{x_2} }
\def\dgbyxp{ \dgbv{\yp}{\xp} }
\def\xp{ x_+ }
\def\xm{ x_- }
\def\yp{y_+}
\def\ym{ y_- }
\def\tctfn{categorification functor}
\def\dgbm{diagonal bimodule}
\def\xMcm{ M_{\mathrm{cm}} }
\def\zcn{ \mathrm{cn} }
\def\tflcn{filtered-canonical}
\def\tflcnm{\tflcn\ morphism}
\def\chp{ \chi_+ }
\def\chm{\chi_- }
\def\ssqt{ \shs_{q} }
\def\ssqti{ \ssqt^{-1} }
\def\saqt{ \sha_{q} }
\def\stat{ \sht_{a} }
\def\stot{ \sht_{\mathrm{tot}} }
\def\stati{ \stat^{-1} }
\def\stas{ \sht_{s} }
\def\stashf{\stas}
\def\xsht{ \stas }
\def\stasaqt{ \xsht\saqt }
\def\stasaqti{ (\stasaqt)^{-1} }
\def\xsmmt#1{
\left(\!
\begin{smallmatrix}
#1
\end{smallmatrix}
\!
\right)
}
\def\tbrgr{braid-graph}
\def\fdg{filtration diagram}
\def\brgr{\gamma}
\def\brgrv#1{ \brgr_{#1} }
\def\brgrw{ \brgrv{\wert} }
\def\brgrwp{ \brgrv{\wertp} }
\def\Ngr{m}
\def\cbrgr{ \xctv{\brgr} }
\def\cbrwb{ \xctv{\brwb} }
\def\cfrbr{ \xctfrv{\brgr} }
\def\cfrbraw{ \xctfrv{\brgrw} }
\def\cfrbrawp{ \xctfrv{\brgrwp} }
\def\cfrbw{ \xctfrv{\brwb} }
\def\cfrbrvr{ \cfrbr_\vert }
\def\ccnbr{ \xctcnv{\brgr} }
\def\ccnbri{ \xctcnv{\brgr_i} }
\def\ccnbrw{ \xctcnv{\brgrw} }
\def\ccnbw{ \xctcnv{\brwb} }
\def\cgnst{ \xctv{\gnst} }
\def\cgnstvert{ \xctv{\gnstvert} }
\def\cgnstvertp{ \xctv{\gnstvertp} }
\def\cmbai#1{
\underbrace{\zverlv{\lncf}{5}\cdots \zverlv{\lncf}{5}}_{i-1}\;
#1\;
\underbrace{\zverlv{\lncf}{5}\cdots \zverlv{\lncf}{5}}_{\bstr -i-1}
}
\def\fblb{ b^+ }
\def\fblg{ b^- }
\def\fblbv#1{ \fblb_{#1} }
\def\fblgv#1{ \fblg_{#1} }
\def\fblbi{ \fblbv{i} }
\def\fblgi{ \fblgv{i} }
\def\fblbj{ \fblbv{j} }
\def\fblgj{ \fblgv{j} }
\def\ctcnblbi{ \xctcnv{\fblbi} }
\def\ctcnblgi{ \xctcnv{\fblgi} }
\def\ctcnIdptn{ \xctcnv{\Idptn} }
\def\ctcngnsti{ \xctcnv{\gnsti} }
\def\sgm{ \sigma }
\def\sgmp{ \sgm }
\def\sgmm{ \sgm^- }
\def\Kr{Koszul resolution}
\def\cKr{canonical \Kr}
\def\varc{vertical arc}
\def\Zbstr{\ZZ^\blbn}
\def\cpv#1{ #1 }
\def\cpA{ A }
\def\cpB{ B }
\def\cpC{ C }
\def\cpCsm{ \cpC_{\mathrm{sym}} }
\def\cpd{ \cpv{d} }
\def\cqd{ d }
\def\cqdv#1{ \cqd_{#1} }
\def\cqdo{ \cqdv{1} }
\def\cqdm{ \cqdv{\blbn} }
\def\cqdi{ \cqdv{i} }
\def\cqdj{ \cqdv{j} }
\def\cqde{ \cnqdte }
\def\cnqd{ \cqd^{\mathrm{cn} } }
\def\cnqdv#1{ \cnqd_{#1} }
\def\etdg{ e }
\def\cnqdte{ \cnqdv{\etdg} }
\def\xcmpv#1#2{ (#1,#2) }
\def\xcmpAd{ \xcmpv{\cpA}{\cpd} }
\def\xcmpAdz{ \xcmpv{A}{\xbdz} }
\def\xcmpAdv{ \xcmpv{\yAvrt}{\yydvrt} }
\def\xcmqAd{ \xcmpv{\cpA}{\cqd} }
\def\yAv#1{ A_{#1} }
\def\yAvrt{ \yAv{\vert} }
\def\yydv#1{ d_{#1} }
\def\yydvrt{ \yydv{\vert} }
\def\yydvrtp{ \yydv{\vertp} }
\def\xdv#1{ d_{#1} }
\def\xdA{ \xdv{A} }
\def\xdB{ \xdv{B} }
\def\xdAv#1{ \xdv{A_{#1} } }
\def\xdAo{ \xdAv{1} }
\def\xdAt{ \xdAv{2} }
\def\cpfbvv#1#2{ \bigl( #1,#2 \bigr) }
\def\ttdg{total degree}
\def\tdgv#1{ |#1| }
\def\xbdv#1{ d_{(#1)} }
\def\xbdz{ \xbdv{0} }
\def\xbdo{ \xbdv{1} }
\def\xbdt{ \xbdv{2} }
\def\xbdi{ \xbdv{i} }
\def\xbdj{ \xbdv{j} }
\def\xbde{ d_{\edg} }
\def\xdot{ d^{\otimes} }
\def\xdotv#1{ \xdot_{(#1)} }
\def\xdoto{ \xdotv{1} }
\def\xdotz{ \xdotv{0} }
\def\xdote{ \xdot_{\edg} }
\def\dudvv#1#2{ d_{#2;#1} }
\def\dudvvpA{ \dudvv{\vert,\vertp}{A} }
\def\dudvvpB{ \dudvv{\vert,\vertp}{B} }
\def\fun#1{ \mathcal{#1} }
\def\fF{ \fun{F} }
\def\fR{ \fun{R} }
\def\fFtot{ \fF^{[\mathrm{tot}]} }
\def\fFtt{ \fF^{\mathrm{tot}} }
\def\fFtotD{ \fFtot_{\mcat{D} } }
\def\fFttD{ \fF^{\mathrm{tot}}_{\mcat{D} } }
\def\xCbv#1 { \sbsC^{#1} }
\def\xCbm{ \xCbv{\blbn} }
\def\xCbmp{ \xCbm }
\def\svbrcn{ \Prcnxv{\cgnstvert} }
\def\ydavv#1#2{ d_{#1,#2} }
\def\ydvvp{ \ydavv{\vert}{\vertp} }
\def\mtset#1{ \mathfrak{#1} }
\def\sbsG{ \mtset{G} }
\def\sbsF{ \mtset{F} }
\def\sbsX{ \mtset{X} }
\def\sbsC{ \mtset{C} }
\def\sbsGv#1{ \sbsG_{#1} }
\def\sbsFv#1{ \sbsF_{#1} }
\def\sbsXv#1{ \sbsX_{#1} }
\def\sbsGi{ \sbsGv{i} }
\def\sbsFi{ \sbsFv{i} }
\def\sbsXA{ \sbsXv{A} }
\def\sbsXAi{ \sbsXv{A;i} }
\def\cnst{constituent}
\def\cnstc{\cnst\ complex}
\def\tdmn{domain}
\def\zdvv#1#2{ d^{#1}_{#2} }
\def\zdotv#1{ \zdvv{#1}{12} }
\def\zdAot{ \zdotv{A} }
\def\zdBot{ \zdotv{B} }
\def\Acnv#1{ A_{#1} }
\def\Avr{\Acnv{\vert} }
\def\Bcnv#1{ B_{#1} }
\def\Bvr{\Bcnv{\vert} }
\def\imx{i_{\mathrm{max}} }
\def\imn{i_{\mathrm{min}} }
\def\xhen{ h }
\def\xhenv#1{ \xhen_{(#1)} }
\def\xheni{ \xhenv{i} }
\def\xhensv#1{ \xhen_{#1} }
\def\EnChA{ \End_{\ChFnA}}
\def\gnC{ \ccnbr }
\def\frF{ F }
\def\frFv#1{ \frF_{#1} }
\def\frFi{ \frFv{i} }
\def\yhm{ h }
\def\tdv{ \tdgv{\vert} }
\def\tdvp{ \tdgv{\vertp} }
\def\dgtt{ \deg_{\mathrm{tot} } }
\def\xpsi{ \psi }
\def\xpsiv#1{ \xpsi_{#1} }
\def\xpsim{ \xpsiv{-} }
\def\xpsip{ \xpsiv{+} }
\def\xxi{ \xi }
\def\xei{ \eta }
\def\xxit{ \tilde{\xxi}}
\def\xeit{ \tilde{\xei}}
\def\xkap{ \kappa }
\def\xkapt{ \tilde{\xkap} }
\def\prg{ p_{\circ}}
\def\xfxi{f_{\times\shortmid} }
\def\xfAas{ f_{A\ast} }
\def\xfasA{ f_{\ast A} }
\def\ftv#1{ f_{\mathrm{#1} } }
\def\flr{ \ftv{lr} }
\def\frl{ \ftv{rl} }
\def\fls{ \ftv{ls} }
\def\fsl{ \ftv{sl} }
\def\frs{ \ftv{rs} }
\def\fsr{ \ftv{sr} }
\def\cpbl{ [ }
\def\cpbr{ ] }
\def\Bcpbl{ \Bigl\cpbl }
\def\Bcpbr{ \Bigr\cpbr }
\def\bgcpbl{ \biggl\cpbl }
\def\bgcpbr{ \biggr\cpbr }
\def\dfbxyv#1{ p_#1(\bfy)-p_#1(\bfx) }
\def\qiso{quasi-isomorphism}
\def\qisc{quasi-isomorphic}
\def\xcpCs{ C_{\ast} }
\def\zzf{ f }
\def\zzg{ g }
\def\zzh{ h }
\def\zzfz{ \zeta }
\def\zzfu{ \zzfz_{\mathrm{u}} }
\def\zzfd{ \zzfz_{\mathrm{d}} }
\def\otdr{ \stackrel{\mathrm{L}}{\otimes} }
\def\otlgt{ \otdr_{\Qv{x_2,y_2}}\ctxtLbbxyt }
\def\dgMv#1{ \tensor*[_{y_{#1}}]{M}{^{\Delta}_{x_{#1}}} }
\def\dgMo{ \dgMv{1} }
\def\dgMbv#1#2 { \tensor*[_{#1}]{M}{_{#2}} }
\def\dgMbxy{ \dgMbv{\bfy}{\bfx} }
\def\zQz{ z\Qz }
\def\Qzz{\Qz/(z)}
\def\Qzzbr{ \bigl(\Qzz \bigr)  }
\def\Qclcz{ \saqt\Qz\oplus\Qz }
\def\Qclczbr{ \bigl(\Qclcz\bigr) }
\def\tinn{inner}
\def\tout{outer}
\def\xars{\ar}
\def\xari{\ar}
\def\ysgmm{\sigma^{-1}}
\def\ysgmmn{\sigma^{-n}}
\def\ysgmmtn{\sigma^{-2n}}
\def\ysgmmtno{\sigma^{-(2n+1)}}
\def\ysgmmtnmo{\sigma^{-2n+1}}
\def\ctsgmtn{ \xctv{\ysgmmtn} }
\def\ctsgmtno{ \xctv{\ysgmmtno} }
\def\xthet{ \theta }
\def\xdmz{ z }
\def\xdmw{ w }
\def\tdmmy{dummy} 
\def\shar{0.25em}
\def\lncf{0.4}
\def\lntx{0.3}
\def\lnty{0.3}
\def\lnte{-9}
\def\zvirtv{
\begin{tikzpicture} \draw + (1,0) -- ++(0,1);
\draw (0,0) -- ++(1,1);
\end{tikzpicture}
 }
\def\zvirtv#1#2{
 \begin{tikzpicture}[scale=#1,baseline=11*#1-#2*#1]
 \path[use as bounding box] (-0.1,-0.1) rectangle (1.1,1.1);
 \draw (1,0) to [crvbr]  (0,1);
 \draw (0,0) to [crvbr] (1,1);
\end{tikzpicture}
}
\def\zbrdpv#1#2{
\begin{tikzpicture}[scale=#1,baseline=11*#1-#2*#1]
\path[use as bounding box] (-0.1,-0.1) rectangle (1.1,1.1);
\draw (1,0) to [crvbr] (0,1);
\draw[line width=6pt, draw=white] (0,0) to [crvbr] (1,1);
\draw (0,0) to [crvbr] (1,1);
\end{tikzpicture}
}
\def\pcrcl{\draw (0,1) to [out=90,in=0] (-0.25,1.25) to [out=180, in=90] (-0.5,1)
to [out=270, in =90] (-0.5,0) to [out = -90, in = 180] (-0.25,-0.25) to [out=0,in=270] (0,0)}
\def\zbrdpcv#1#2{
\begin{tikzpicture}[scale=#1,baseline=11*#1-#2*#1]
\path[use as bounding box] (-0.75,-0.5) rectangle (1.1,1.5);
\draw (1,0) to [crvbr] (0,1);
\draw[line width=6pt, draw=white] (0,0) to [crvbr] (1,1);
\draw (0,0) to [crvbr] (1,1);
\pcrcl;
\end{tikzpicture}
}
\def\zbrdmcv#1#2{
\begin{tikzpicture}[scale=#1,baseline=11*#1-#2*#1]
\path[use as bounding box] (-0.75,-0.5) rectangle (1.1,1.5);
\draw (0,0) to [crvbr] (1,1);
\draw[line width=6pt, draw=white] (1,0) to [crvbr] (0,1);
\draw (1,0) to [crvbr] (0,1);
\pcrcl;
\end{tikzpicture}
}
\def\zdgrclv#1#2{
\begin{tikzpicture}[scale=#1,baseline=11*#1-#2*#1]
 \path[use as bounding box] (-0.75,-0.5) rectangle (1.1,1.5);
 \draw (1,0) to [crvbr]  (0,1);
 \draw (0,0) to [crvbr] (1,1);
 \draw [flwt] (0.5,0.5) circle [cirdgf];
 \pcrcl;
\end{tikzpicture}
}
\def\zparlclv#1#2{
 \begin{tikzpicture}[scale=#1,baseline=11*#1-#2*#1]
\path[use as bounding box] (-0.75,-0.5) rectangle (1.1,1.5);
\draw (1,0) to [crvbr] (0.8,0.5) to [crvbr] (1,1);
\draw (0,0) to [crvbr] (0.2,0.5) to [crvbr] (0,1);
\pcrcl;
\end{tikzpicture}
}
\def\zvirtclv#1#2{
 \begin{tikzpicture}[scale=#1,baseline=11*#1-#2*#1]
 \path[use as bounding box] (-0.75,-0.5) rectangle (1.1,1.5);
 \draw (1,0) to [crvbr]  (0,1);
 \draw (0,0) to [crvbr] (1,1);
 \pcrcl;
\end{tikzpicture}
}
\def\zblbbclv#1#2{
 \begin{tikzpicture}[scale=#1,baseline=11*#1-#2*#1]
 \path[use as bounding box] (-0.75,-0.5) rectangle (1.1,1.5);
 \draw (1,0) to [crvbr]  (0,1);
 \draw (0,0) to [crvbr] (1,1);
 \draw [fill] (0.5,0.5) circle [cirrad];
 \pcrcl;
\end{tikzpicture}
\end{tikzpicture}
}
\def\zbrdmv#1#2{
\begin{tikzpicture}[scale=#1,baseline=11*#1-#2*#1]
\path[use as bounding box] (-0.1,-0.1) rectangle (1.1,1.1);
\draw (0,0) to [crvbr] (1,1);
\draw[line width=6pt, draw=white] (1,0) to [crvbr] (0,1);
\draw (1,0) to [crvbr] (0,1);
\end{tikzpicture}
}
\def\zdgrfv#1#2{
 \begin{tikzpicture}[scale=#1,baseline=11*#1-#2*#1]
 \path[use as bounding box] (-0.1,-0.1) rectangle (1.1,1.1);
 \draw (1,0) to [crvbr]  (0,1);
 \draw (0,0) to [crvbr] (1,1);
 \draw [flwt] (0.5,0.5) circle [cirdgf];
\end{tikzpicture}
}
\def\zblbbv#1#2{
 \begin{tikzpicture}[scale=#1,baseline=11*#1-#2*#1]
 \path[use as bounding box] (-0.1,-0.1) rectangle (1.1,1.1);
 \draw (1,0) to [crvbr]  (0,1);
 \draw (0,0) to [crvbr] (1,1);
 \draw [fill] (0.5,0.5) circle [cirrad];
\end{tikzpicture}
}
\def\zblbgv#1#2{
 \begin{tikzpicture}[scale=#1,baseline=11*#1-#2*#1]
 \path[use as bounding box] (-0.1,-0.1) rectangle (1.1,1.1);
 \draw (1,0) to [crvbr]  (0,1);
 \draw (0,0) to [crvbr] (1,1);
 \draw [flgr] (0.5,0.5) circle [cirrad];
\end{tikzpicture}
}
\def\zblbgpv#1#2{
 \begin{tikzpicture}[scale=#1,baseline=11*#1-#2*#1]
 \path[use as bounding box] (-0.1,-0.1) rectangle (1.6,1.1);
 \draw (1,0) to [crvbr]  (0,1);
 \draw (0,0) to [crvbr] (1,1);
 \draw (1.5,0) to (1.5,1);
 \draw [flgr] (0.5,0.5) circle [cirrad];
\end{tikzpicture}
}
\def\zblbbpv#1#2{
 \begin{tikzpicture}[scale=#1,baseline=11*#1-#2*#1]
 \path[use as bounding box] (-0.1,-0.1) rectangle (1.6,1.1);
 \draw (1,0) to [crvbr]  (0,1);
 \draw (0,0) to [crvbr] (1,1);
 \draw (1.5,0) to (1.5, 1);
 \draw [fill] (0.5,0.5) circle [cirrad];
\end{tikzpicture}
}
\def\zlblbgpv#1#2#3{
\begin{tikzpicture}[xscale=#1,yscale=#2,baseline=11*#2-#3*#1]
\path[use as bounding box] (-0.2,-0.1) rectangle (2.2,2.1);
\draw (0,0) to [crvbr] (1,2);
\draw (1,0) to [crvbr] (0,2);
\draw (2,0) to [crvbr] (2,2);
\draw [flgr] (0.5,1) circle [cirrad];
\end{tikzpicture}
}
\def\zparlv#1#2{
 \begin{tikzpicture}[scale=#1,baseline=11*#1-#2*#1]
\path[use as bounding box] (-0.1,-0.1) rectangle (1.1,1.1);
\draw (1,0) to [crvbr] (0.8,0.5) to [crvbr] (1,1);
\draw (0,0) to [crvbr] (0.2,0.5) to [crvbr] (0,1);
\end{tikzpicture}
}
\def\zparlpv#1#2{
 \begin{tikzpicture}[scale=#1,baseline=11*#1-#2*#1]
\path[use as bounding box] (-0.1,-0.1) rectangle (1.6,1.1);
\draw (1,0) to [crvbr] (0.8,0.5) to [crvbr] (1,1);
\draw (0,0) to [crvbr] (0.2,0.5) to [crvbr] (0,1);
\draw (1.5,0) to (1.5,1);
\end{tikzpicture}
}
\def\zparlpv#1#2{
 \begin{tikzpicture}[scale=#1,baseline=11*#1-#2*#1]
\path[use as bounding box] (-0.1,-0.1) rectangle (1.6,1.1);
\draw (1,0) to [crvbr] (0.8,0.5) to [crvbr] (1,1);
\draw (0,0) to [crvbr] (0.2,0.5) to [crvbr] (0,1);
\draw (1.5,0) to (1.5,1);
\end{tikzpicture}
}
\def\ztparl#1#2#3{
\begin{tikzpicture}[xscale=#1,yscale=#2,baseline=11*#2-#3*#1]
\path[use as bounding box] (-0.2,-0.1) rectangle (2.2,2.1);
\draw (0,0) to [crvbr] (0,2);
\draw (1,0) to [crvbr] (1,2);
\draw (2,0) to [crvbr] (2,2);
\end{tikzpicture}
}
\def\ztvirp#1#2#3{
\begin{tikzpicture}[xscale=#1,yscale=#2,baseline=11*#2-#3*#1]
\path[use as bounding box] (-0.2,-0.1) rectangle (2.2,2.1);
\draw (0,0) to [crvbr] (1,2);
\draw (1,0) to [crvbr] (0,2);
\draw (2,0) to [crvbr] (2,2);
\end{tikzpicture}
}
\def\ztpvir#1#2#3{
\begin{tikzpicture}[xscale=#1,yscale=#2,baseline=11*#2-#3*#1]
\path[use as bounding box] (-0.2,-0.1) rectangle (2.2,2.1);
\draw (0,0) to [crvbr] (0,2);
\draw (1,0) to [crvbr] (2,2);
\draw (2,0) to [crvbr] (1,2);
\end{tikzpicture}
}
\def\ztrilo#1#2#3{
\begin{tikzpicture}[xscale=#1,yscale=#2,baseline=11*#2-#3*#1]
\path[use as bounding box] (-0.2,-0.1) rectangle (2.2,2.1);
\draw (0,0) to [crvbr] (1,2);
\draw (1,0) to [crvbr] (2,2);
\draw (2,0) to [crvbr] (0,2);
\end{tikzpicture}
}
\def\ztlelo#1#2#3{
\begin{tikzpicture}[xscale=#1,yscale=#2,baseline=11*#2-#3*#1]
\path[use as bounding box] (-0.2,-0.1) rectangle (2.2,2.1);
\draw (0,0) to [crvbr] (2,2);
\draw (1,0) to [crvbr] (0,2);
\draw (2,0) to [crvbr] (1,2);
\end{tikzpicture}
}
\def\ztvrtr#1#2#3{
\begin{tikzpicture}[xscale=#1,yscale=#2,baseline=11*#2-#3*#1]
\path[use as bounding box] (-0.2,-0.1) rectangle (2.2,2.1);
\draw (0,0) to [crvbr] (2,2);
\draw (1,0) to [crvbr] (1,2);
\draw (2,0) to [crvbr] (0,2);
\end{tikzpicture}
}
\def\ztvrtb#1#2#3{
\begin{tikzpicture}[xscale=#1,yscale=#2,baseline=11*#2-#3*#1]
\path[use as bounding box] (-0.2,-0.1) rectangle (2.2,2.1);
\draw (0,0) to [crvbr] (2,2);
\draw (1,0) to [crvbr] (1,2);
\draw (2,0) to [crvbr] (0,2);
\draw [fill] (1,1) circle [cirrad];
\end{tikzpicture}
}
\def\ztvrbb#1#2#3{
\begin{tikzpicture}[xscale=#1,yscale=#2,baseline=11*#2-#3*#1]
\path[use as bounding box] (-0.2,-0.1) rectangle (2.2,2.1);
\draw [name path=A] (0,0) to [out=25, in=245] (2,2);
\draw [name path=B] (1,0) to [out=140,in=-140] (1,2);
\draw [name path=C] (2,0) to [out=115, in=-25] (0,2);
\path [name intersections={of=A and B,by={AB}}];
\path [name intersections={of=B and C,by={BC}}];
\path [name intersections={of=A and C,by={AC}}];
\draw [fill] (AB) circle [cirrad];
\draw [fill] (AC) circle [cirrad];
\draw [fill] (BC) circle [cirrad];
\end{tikzpicture}
}
\def\ztvrbu#1#2#3{
\begin{tikzpicture}[xscale=#1,yscale=#2,baseline=11*#2-#3*#1]
\path[use as bounding box] (-0.2,-0.1) rectangle (2.2,2.1);
\draw [name path=A] (0,0) to [out=25, in=245] (2,2);
\draw [name path=B] (1,0) to [out=140,in=-140] (1,2);
\draw [name path=C] (2,0) to [out=115, in=-25] (0,2);
\path [name intersections={of=A and B,by={AB}}];
\path [name intersections={of=B and C,by={BC}}];
\path [name intersections={of=A and C,by={AC}}];
\draw [fill] (BC) circle [cirrad];
\end{tikzpicture}
}
\def\ztvrbd#1#2#3{
\begin{tikzpicture}[xscale=#1,yscale=#2,baseline=11*#2-#3*#1]
\path[use as bounding box] (-0.2,-0.1) rectangle (2.2,2.1);
\draw [name path=A] (0,0) to [out=25, in=245] (2,2);
\draw [name path=B] (1,0) to [out=140,in=-140] (1,2);
\draw [name path=C] (2,0) to [out=115, in=-25] (0,2);
\path [name intersections={of=A and B,by={AB}}];
\path [name intersections={of=B and C,by={BC}}];
\path [name intersections={of=A and C,by={AC}}];
\draw [fill] (AB) circle [cirrad];
\end{tikzpicture}
}
\def\ztlelob#1#2#3{
\begin{tikzpicture}[xscale=#1,yscale=#2,baseline=11*#2-#3*#1]
\path[use as bounding box] (-0.2,-0.1) rectangle (2.2,2.1);
\draw [name path=A] (0,0) to [crvbr] (2,2);
\draw [name path=B] (1,0) to [crvbr] (0,2);
\draw [name path=C] (2,0) to [crvbr] (1,2);
\path [name intersections={of=A and B, by={AB}}];
\path [name intersections={of=A and C, by={AC}}];
\draw [fill] (AB) circle [cirrad];
\draw [fill] (AC) circle [cirrad];
\end{tikzpicture}
}
\def\ztrilob#1#2#3{
\begin{tikzpicture}[xscale=#1,yscale=#2,baseline=11*#2-#3*#1]
\path[use as bounding box] (-0.2,-0.1) rectangle (2.2,2.1);
\draw [name path=B] (0,0) to [crvbr] (1,2);
\draw [name path=C] (1,0) to [crvbr] (2,2);
\draw [name path=A] (2,0) to [crvbr] (0,2);
\path [name intersections={of=A and B, by={AB}}];
\path [name intersections={of=A and C, by={AC}}];
\draw [fill] (AB) circle [cirrad];
\draw [fill] (AC) circle [cirrad];
\end{tikzpicture}
}
\def\zlblbbp#1#2#3{
\begin{tikzpicture}[xscale=#1,yscale=#2,baseline=11*#2-#3*#1]
\path[use as bounding box] (-0.2,-0.1) rectangle (2.2,2.1);
\draw (0,0) to [crvbr] (1,2);
\draw (1,0) to [crvbr] (0,2);
\draw (2,0) to [crvbr] (2,2);
\draw [fill] (0.5,1) circle [cirrad];
\end{tikzpicture}
}
\def\zlpblbb#1#2#3{
\begin{tikzpicture}[xscale=#1,yscale=#2,baseline=11*#2-#3*#1]
\path[use as bounding box] (-0.2,-0.1) rectangle (2.2,2.1);
\draw (0,0) to [crvbr] (0,2);
\draw (1,0) to [crvbr] (2,2);
\draw (2,0) to [crvbr] (1,2);
\draw [fill] (1.5,1) circle [cirrad];
\end{tikzpicture}
}
\def\zdblbbp#1#2#3{
\begin{tikzpicture}[xscale=#1,yscale=#2,baseline=11*#2-#3*#1]
\path[use as bounding box] (-0.2,-0.1) rectangle (2.2,2.1);
\draw [name path=A] (0,0) to [crvbr] (1,1);
\draw [name path=B] (1,0) to [crvbr] (0,1);
\draw [name path=C] (1,1) to [crvbr] (0,2);
\draw [name path=D] (0,1) to [crvbr] (1,2);
\draw (2,0) to (2,2);
\path [name intersections={of=A and B, by={AB}}];
\path [name intersections={of=D and C, by={DC}}];
\draw [fill] (AB) circle [cirrad];
\draw [fill] (DC) circle [cirrad];
\end{tikzpicture}
}
\def\sftparl{ \ztparl{\lntx}{\lnty}{\lnte} }
\def\sftpvir{ \ztpvir{\lntx}{\lnty}{\lnte} }
\def\sftrilo{ \ztprilo{\lntx}{\lnty}{\lnte} }
\def\sfcntparl{ \zectcnv{\ztparl{\lntx}{\lnty}{\lnte} } }
\def\sfcntvirp{ \zectcnv{\ztvirp{\lntx}{\lnty}{\lnte} } }
\def\sfcntpvir{ \zectcnv{\ztpvir{\lntx}{\lnty}{\lnte} } }
\def\sfcntlelo{ \zectcnv{\ztlelo{\lntx}{\lnty}{\lnte} } }
\def\sfcntrilo{ \zectcnv{\ztrilo{\lntx}{\lnty}{\lnte} } }
\def\sfcntvrtr{ \zectcnv{\ztvrtr{\lntx}{\lnty}{\lnte} } }
\def\sfcntvrtb{ \zectcnv{\ztvrtb{\lntx}{\lnty}{\lnte} } }
\def\sfcnlblbgp{ \zectcnv{\zlblbgpv{\lntx}{\lnty}{\lnte} } }
\def\sftparl{ \zectv{\ztparl{\lntx}{\lnty}{\lnte} } }
\def\sftpvir{ \zectv{\ztpvir{\lntx}{\lnty}{\lnte} } }
\def\sftrilo{ \zectv{\ztrilo{\lntx}{\lnty}{\lnte} } }
\def\sftvrtb{ \zectv{\ztvrtb{\lntx}{\lnty}{\lnte} } }
\def\sftlelob{ \zectv{\ztlelob{\lntx}{\lnty}{\lnte} } }
\def\sftrilob{ \zectv{\ztrilob{\lntx}{\lnty}{\lnte} } }
\def\sfdblbbp{ \zectv{\zdblbbp{\lntx}{\lnty}{\lnte} } }
\def\sflblbbp{ \zectv{\zlblbbp{\lntx}{\lnty}{\lnte} } }
\def\sflblbgp{ \zectv{\zlblbgpv{\lntx}{\lnty}{\lnte} } }
\def\sflpblbb{ \zectv{\zlpblbb{\lntx}{\lnty}{\lnte} } }
\def\sftvrbb{ \zectv{\ztvrbb{\lntx}{\lnty}{\lnte} } }
\def\sftvrbu{ \zectv{\ztvrbu{\lntx}{\lnty}{\lnte} } }
\def\sftvrbd{ \zectv{\ztvrbd{\lntx}{\lnty}{\lnte} } }
\def\dparl
\def\dvirt
\def\dbrdp
\def\dbrdm
\def\dblbb
\def\dblbg
\def\zvvv#1#2#3#4
\def\zpvvv#1#2#3#4
\def\zzvv#1#2#3#4#5
\def\zzmvv#1#2#3#4#5
\def\ztwv#1#2#3#4#5
\def\zbrpbrm#1#2{ \zvvv{#1}{#2}{\dbrdp}{\dbrdm} }
\def\zbrmbrp#1#2{ \zvvv{#1}{#2}{\dbrdm}{\dbrdp} }
\def\zbrmbrm#1#2{ \zvvv{#1}{#2}{\dbrdm}{\dbrdm} }
\def\zbrmbrmp#1#2{ \zpvvv{#1}{#2}{\dbrdm}{\dbrdm} }
\def\zblbblg#1#2{ \zvvv{#1}{#2}{\dblbb}{\dblbg} }
\def\zblgblb#1#2{ \zvvv{#1}{#2}{\dblbg}{\dblbb} }
\def\zblbblb#1#2{ \zvvv{#1}{#2}{\dblbb}{\dblbb} }
\def\zparblb#1#2{ \zvvv{#1}{#2}{\dparl}{\dblbb} }
\def\zparblg#1#2{ \zvvv{#1}{#2}{\dparl}{\dblbg} }
\def\zblgpar#1#2{ \zvvv{#1}{#2}{\dblbg}{\dparl} }
\def\zblbpar#1#2{ \zvvv{#1}{#2}{\dblbb}{\dparl} }
\def\zbrmblb#1#2{ \zvvv{#1}{#2}{\dbrdm}{\dblbb} }
\def\zbrmblg#1#2{ \zvvv{#1}{#2}{\dbrdm}{\dblbg} }
\def\zbrmpar#1#2{ \zvvv{#1}{#2}{\dbrdm}{\dparl} }
\def\zparpar#1#2{ \zvvv{#1}{#2}{\dparl}{\dparl} }
\def\zblbvir#1#2{ \zvvv{#1}{#2}{\dblbb}{\dvirt} }
\def\zvirblb#1#2{ \zvvv{#1}{#2}{\dvirt}{\dblbb} }
\def\zblgvir#1#2{ \zvvv{#1}{#2}{\dblbg}{\dvirt} }
\def\zvirblg#1#2{ \zvvv{#1}{#2}{\dvirt}{\dblbg} }
\def\zvirbrpvir#1#2{ \ztwv{#1}{#2}{\dvirt}{\dbrdp}{\dvirt} }
\def\zvirbrmvir#1#2{ \ztwv{#1}{#2}{\dvirt}{\dbrdm}{\dvirt} }
\def\zverlv#1#2{
\begin{tikzpicture}[scale=#1,baseline=11*#1-#2*#1]
 \path[use as bounding box] (-0.1,-0.1) rectangle (0.2,1.1);
 \draw (0.,0) -- ++(0,1);
\end{tikzpicture}
}
\def\cblth{1.3pt}
\def\ljwp{1pt}
\tikzset{ptzer/.style={line width=\ljwp} }
\tikzset{ptzert/.style={line width=\ljwp,fill=white}}
\tikzset{ptone/.style={line width=\ljwp,fill=gray!30}}
\tikzset{pttwo/.style={line width=\ljwp,fill=white,pattern=horizontal lines} }
\tikzset{ptthr/.style={line width=\ljwp,pattern=north east lines} }
\tikzset{ptfour/.style={line width=\ljwp,pattern=crosshatch} }
\tikzset{vthln/.style={line width=\ljwp} }
\tikzset{smmt/.style={matrix of math nodes, row sep=2em,
column sep=1.5em, text height=1.5ex, text depth=0.25ex} }
\tikzset{stmt/.style={matrix of math nodes, row sep=3em,
column sep=1.5em, text height=1.5ex, text depth=0.25ex} }
\tikzset{ssmt/.style={matrix of math nodes, row sep=3em,
column sep=3em, text height=1.5ex, text depth=0.25ex} }
\tikzset{ssmto/.style={matrix of math nodes, row sep=3em,
column sep=1.5em, text height=1.5ex, text depth=0.25ex} }
\tikzset{ssmtt/.style={matrix of math nodes, row sep=2em,
column sep=3.5em, text height=1.5ex, text depth=0.25ex} }
\tikzset{ssmth/.style={matrix of math nodes, row sep=3em,
column sep=4.5em, text height=2.5ex, text depth=1.25ex} }
\tikzset{ssmtf/.style={matrix of math nodes, row sep=3em,
column sep=3em, text height=2.5ex, text depth=1.25ex} }
\tikzset{ssmts/.style={matrix of math nodes, row sep=2em,
column sep=4.5em, text height=2.5ex, text depth=1.25ex} }
\tikzset{sxmt/.style={matrix of math nodes, row sep=4em,
column sep=3em, text height=3.5ex, text depth=2.25ex} }
\tikzset{ampre/.style={ampersand replacement=\&}}
\tikzset{menvone/.style={scale=0.5,baseline=-1.5} }
\tikzset{menvtwo/.style={scale=0.70,baseline=-1.5} }
\tikzset{menvthree/.style={scale=0.35, baseline=-1.5} }
\tikzset{menvfour/.style={scale=0.25, baseline=-1.5} }
\tikzset{menvzer/.style={scale=1,baseline=-1.5} }
\tikzset{menvrtone/.style={scale=0.45,rotate=-45,baseline=-3.5} }
\tikzset{menvrtsone/.style={scale=0.65,rotate=-45,baseline=-3.5} }
\tikzset{menvrtonep/.style={scale=0.5,baseline=-3.5} }
\tikzset{lnovr/.style={line width=6pt,color=white}}
\tikzset{lnovrtw/.style={line width=4pt,color=white}}
\tikzset{thkln/.style={line width=\cblth} }
\tikzset{thnln/.style={} }
\tikzset{thkc/.style={line width=0.6pt} }
\tikzset{bcrc/.style={line width=0.2cm,color=gray!40,opacity=0.5} }
\tikzset{bcrct/.style={line width=0.25cm,color=gray!20} }
\tikzset{crvbr/.style={out=90,in=270} }
\tikzset{cirrad/.style={radius=0.2}}
\tikzset{cirdgf/.style={radius=0.35}}
\tikzset{flgr/.style={fill=gray!30}}
\tikzset{flwt/.style={fill=white}}
\tikzset{middle segment/.style={decoration={middle},decorate, segment length=#1}}
\def\sfverlbxyo{ \xctvbyxo{ \zverlv{\lncf}{5}}}
\def\sfbrdcpyxt{\yectvyxt{\zbrdpcv{\lncf}{5}} }
\def\sfbrdcmyxt{\yectvyxt{\zbrdmcv{\lncf}{5}} }
\def\sfbrdmclxyt{\yectvxyt{\zbrdmcv{\lncf}{5} }}
\def\sfdgrclxyt{\yectvxyt{\zdgrclv{\lncf}{5}} }
\def\sfdgrfxy{\xctvbxy{\zdgrfv{\lncf}{5}} }
\def\sfcdgrfxy{\xctcnvbyx{\zdgrfv{\lncf}{5}} }
\def\sverlxyt{ \xctvbyxt{ \zverlv{\lncf}{5}}}
\def\sfvirtclxyt{ \yectvxyt{\zvirtclv{\lncf}{5}} }
\def\sfparlclxyt{ \yectvxyt{\zparlclv{\lncf}{5}} }
\def\sfvirtcl{ \yectv{\zvirtclv{\lncf}{5}} }
\def\sfparlcl{ \yectv{\zparlclv{\lncf}{5}} }
\def\sfbrdpcl{ \yectv{\zbrdpcv{\lncf}{5}} }
\def\sfbrdmcl{ \yectv{\zbrdmcv{\lncf}{5}} }
\def\sfblbb{\xctv{\zblbbv{\lncf}{5}} }
\def\sfblbg{\xctv{\zblbgv{\lncf}{5}} }
\def\sfvirt{\xctv{\zvirtv{\lncf}{5}} }
\def\sfparl{\xctv{\zparlv{\lncf}{5}} }
\def\sfverl{\xctv{\zverlv{\lncf}{5}} }
\def\sfbrdp{\xctv{\zbrdpv{\lncf}{5}} }
\def\sfbrdm{\xctv{\zbrdmv{\lncf}{5}} }
\def\sfblbgp{\xctv{\zblbgpv{\lncf}{5}} }
\def\sfblbbp{\xctv{\zblbbpv{\lncf}{5}} }
\def\sfparlp{\xctv{\zparlpv{\lncf}{5}} }
\def\sflcnblbgp{\zectcnv{\zlblbgpv{\lntx}{\lnty}{\lnte}} }
\def\sfmparl{\xctmv{\zparlv{\lncf}{5}} }
\def\sfmvirt{\xctmv{\zvirtv{\lncf}{5}} }
\def\sfmdgrf{\xctmv{\zdgrfv{\lncf}{5}} }
\def\sfmblbb{\xctmv{\zblbbv{\lncf}{5}} }
\def\sfmblbg{\xctmv{\zblbgv{\lncf}{5}} }
\def\sfnblbb{\xctcnv{\zblbbv{\lncf}{5}} }
\def\sfnblbg{\xctcnv{\zblbgv{\lncf}{5}} }
\def\sfnvirt{\xctcnv{\zvirtv{\lncf}{5}} }
\def\sfnparl{\xctcnv{\zparlv{\lncf}{5}} }
\def\sfndgrf{\xctcnv{\zdgrfv{\lncf}{5}} }
\def\sfbrpbrm{\yectv{\zbrpbrm{\lncf}{-11}} }
\def\sfbrmbrp{\yectv{\zbrmbrp{\lncf}{-11}} }
\def\sfbrmbrm{\yectv{\zbrmbrm{\lncf}{-11}} }
\def\sfblbblg{\yectv{\zblbblg{\lncf}{-11}} }
\def\sfblgblb{\yectv{\zblgblb{\lncf}{-11}} }
\def\sfblbblb{\yectv{\zblbblb{\lncf}{-11}} }
\def\sfparblb{\yectv{\zparblb{\lncf}{-11}} }
\def\sfparblg{\yectv{\zparblg{\lncf}{-11}} }
\def\sfblgpar{\yectv{\zblgpar{\lncf}{-11}} }
\def\sfblbpar{\yectv{\zblbpar{\lncf}{-11}} }
\def\sfbrmblb{\yectv{\zbrmblb{\lncf}{-11}} }
\def\sfbrmblg{\yectv{\zbrmblg{\lncf}{-11}} }
\def\sfbrmpar{\yectv{\zbrmpar{\lncf}{-11}} }
\def\sfparpar{\yectv{\zparpar{\lncf}{-11}} }
\def\sfcnbrpbrm{\yectcnv{\zbrpbrm{\lncf}{-11}} }
\def\sfcnblgblb{\yectcnv{\zblgblb{\lncf}{-11}} }
\def\sfcnparblb{\yectcnv{\zparblb{\lncf}{-11}} }
\def\sfcnparpar{\yectcnv{\zparpar{\lncf}{-11}} }
\def\sfcnblgpar{\yectcnv{\zblgpar{\lncf}{-11}} }
\def\sfcnblbblb{\yectcnv{\zblbblb{\lncf}{-11}} }
\def\sfbrmbrmp{\yectv{\zbrmbrmp{\lncf}{-11}} }
\def\sfvervv#1#2{ \xctvv{\zverlv{\lncf}{5}}{#1}{#2} }
\def\sfveryx{ \sfvervv{y}{x} }
\def\sthmmmo{ \yectv{\zzvv{\lncf}{-25}{\dbrdm}{\dbrdm}{\dbrdm}} }
\def\sthbcn{ \yectcnv{\zzvv{\lncf}{-25}{\dblbb}{\dblbb}{\dblbb} } }
\def\sthbbpcn{ \yectcnv{\zzvv{\lncf}{-25}{\dblbb}{\dparl}{\dblbb} } }
\def\sthbbvcn{ \yectcnv{\zzvv{\lncf}{-25}{\dblbb}{\dvirt}{\dblbb} } }
\def\sthmmb{ \yectv{\zzvv{\lncf}{-25}{\dbrdm}{\dblbb}{\dbrdm} } }
\def\sthmmp{ \yectv{\zzvv{\lncf}{-25}{\dbrdm}{\dparl}{\dbrdm} } }
\numberwithin{equation}{section}
\title[Virtual crossings and a filtration of the triply graded homology]
{Virtual crossings and a filtration of the triply graded homology of a link diagram}
\author[M.~Abel]{Michael Abel}
\address{
M.~Abel\\
Department of Mathematics and Applied Mathematics\\
Virginia Commonwealth University\\
Harris Hall 4103\\
Richmond, VA 23284}
\email{maabel@vcu.edu}
\author[L.~Rozansky]{Lev Rozansky}
\address{
L.~Rozansky\\
Department of Mathematics\\
University of North Carolina at Chapel Hill\\
CB \# 3250, Phillips Hall\\
Chapel Hill, NC 27599
}
\email{rozansky@math.unc.edu}
\thanks{The work of L.R. was supported in part by the NSF grant DMS-1108727}
\xydef@\PATH{\afterPATH{}}
\next@\expandafter{\afterPATH@}%
 \let\PATHfail@@=\empty
 \let\PATHbefore@@=\empty
 \let\PATHafter@@=\empty
 \let\PATHlabelsevery@@=\empty
 \let\PATHlabelsnext@@=\empty
 \let\PATHlabelslast@@=\empty
 \def\PATHcontinue@@{\xyFN@\PATH@}%
\xydef@\PATH@{%
 \ifx \space@\next \expandafter\DN@\space{\xyFN@\PATH@}%
 \else\ifPATHsingle@ \let\next@=\PATH@single
 \else \let\next@=\PATH@normal \fi\fi \next@}
\xydef@\PATH@normal{%
 \ifx ~\next \DN@ ~{\xyFN@\PATHsetting@}%
 \else \addRQ@\ifx \next \addRQ@\DN@{\xy@'{}\xyFN@\PATHstraight@}%
 \else \addLQ@\ifx \next \addLQ@\DN@{\xy@`{}\xyFN@\PATHturn@}%
 \else\ifx \PATHfail@@\PATH@x \DN@{\xyFN@\PATHfinal@}%
 \else
 \DNii@{\let\PATHfail@@=\PATH@x \xyFN@\PATH@}%
 \DN@{\expandafter\nextii@\PATHfail@@}%
 \fi\fi\fi\fi \next@}
\xydef@\PATHsetting@{%
 \ifx \space@\next \expandafter\DN@\space{\xyFN@\PATHsetting@}%
 \else\ifx \bgroup\next
 \DN@##1{\xy@{~{##1}}{}\def\PATHfail@@{##1}\xyFN@\PATH@}%
 \else \addEQ@\ifx \next
 \addEQ@\DN@##1{\xy@{~={##1}}{}\def\PATHbefore@@{##1}\xyFN@\PATH@}%
 \else \addLT@\ifx \next
 \addLT@\DN@##1{\xy@{~<{##1}}{}\def\PATHlabelsnext@@{##1}\xyFN@\PATH@}%
 \else \addGT@\ifx \next
 \addGT@\DN@##1{\xy@{~>{##1}}{}\def\PATHlabelslast@@{##1}\xyFN@\PATH@}%
 \else \addPLUS@\ifx \next
 \addPLUS@\DN@##1{\xy@{~+{##1}}{}\def\PATHlabelsevery@@{##1}\xyFN@\PATH@}%
 \else \ifx /\next
 \DN@/##1{\xy@{~/{##1}}{}\def\PATHafter@@{##1}\xyFN@\PATH@}%
 \else \addDASH@\ifx \next
 \xywarning@{Obsolete `-' PATH <action> translated to `='.}%
 \addDASH@\DN@##1{\xy@{~-{##1}}{}\def\PATHbefore@@{##1}\xyFN@\PATH@}%
 \else
 \xyerror@{Unknown \string~ setting: \meaning\next}%
 {See the Xy-pic arrow feature documentation for help.}%
 \fi\fi\fi\fi\fi\fi\fi\fi \next@}
\xydef@\PATHstraight@{%
 \def\PATHinit@@{\PATHinitstraight@}%
 \let\PATHextra@@=\empty
 \let\PATHpost@@=\empty
 \let\PATHlabelsextra@@=\relax
 \xy@@\pfromc@ \PATHafterPOS{\xyFN@\PATHsegment@}}
\xydef@\PATHfinal@{%
 \def\PATHinit@@{\PATHinitstraight@}%
 \def\PATHextra@@{\let\PATHcontinue@@=\afterPATH@}%
 \let\PATHpost@@=\empty
 \let\PATHlabelsextra@@=\PATHlabelsextralast@
 \xy@@\pfromc@ \PATHafterPOS{\xyFN@\PATHsegment@}}
\xydef@\PATHlabelsextralast@{\let\PATHlabelsextra@@=\relax
 \expandafter\xyFN@\expandafter\PATHlabels@\PATHlabelslast@@}
\xydef@\PATHinitstraight@{\xy@@{\setupDirection@ \dimen@=\PATHslide@@
 \dimen@ii=-\sinDirection\dimen@
 \ifPATHomitslide@@\else\advance\X@p\dimen@ii\fi \advance\X@c\dimen@ii
 \dimen@ii= \cosDirection\dimen@
 \ifPATHomitslide@@\else\advance\Y@p\dimen@ii\fi \advance\Y@c\dimen@ii
 \PATHomitslide@@false \resetupDirection@}}
\xydef@\PATHslide@@{\z@}
\xydef@\PATHsegment@{%
 \addLT@\ifx\next
 \addGT@{\addLT@\DN@##1}{%
 \xy@{<##1>}{\dimen@=##1\relax \edef\PATHslide@@{\the\dimen@}}%
 \xyFN@\PATHsegment@@}%
 \else \let\next@=\PATHsegment@@
 \fi \next@}
\xydef@\PATHsegment@@{\PATHinit@@ \addEQ@\PATHaction@\PATHbefore@@ \PATHextra@@
 \expandafter\toks@\expandafter{\PATHlabelsnext@@}\let\PATHlabelsnext@@=\empty
 \expandafter\addtotoks@\expandafter{\PATHlabelsevery@@}%
 \expandafter\def\expandafter\PATHlabels@@\expandafter{\the\toks@}%
 \toks@={}\expandafter\xyFN@\expandafter\PATHlabels@\PATHlabels@@}
\xydef@\PATHlabels@{%
 \ifx \space@\next \expandafter\DN@\space{\xyFN@\PATHlabels@}%
 \else \ifx ^\next 
 \DN@##1{\xy@^{}\let\PATHlabelit@@=\PATHlabelabove@@
 \DNii@{}\xyFN@\PATHanchor@}%
 \else \ifx _\next
 \DN@##1{\xy@_{}\let\PATHlabelit@@=\PATHlabelbelow@@
 \DNii@{}\xyFN@\PATHanchor@}%
 \else \ifx |\next
 \DN@##1{\xy@|{}\let\PATHlabelit@@=\PATHlabelbreak@
 \DNii@{}\xyFN@\PATHanchor@}%
 \else \let\next@=\PATHfinishsegments@ \fi\fi\fi\fi \next@}
\xydef@\PATHanchor@{\begingroup \toks@={}\PATHanchor@i}
\xydef@\PATHanchor@i{%
 \ifx \space@\next \expandafter\DN@\space{\xyFN@\PATHanchor@i}%
 \else\addDASH@\ifx \next
 \addDASH@\DN@{\expandafter\addtotoks@\expandafter{\PATHanchor@toks}%
 \xyFN@\PATHanchor@i}%
 \else
 \DNii@##1{\endgroup\afterPLACE{\xyFN@\PATHit@}##1}%
 \DN@{\expandafter\nextii@\expandafter{\the\toks@}}%
 \fi\fi \next@}
{\xyuncatcodes \gdef\next{<>(.5)}}
\xydef@\PATHit@{\let\PATHitshape@@=\empty \xyFN@\PATHit@i}
\xydef@\PATHit@i{%
 \ifx \space@\next \expandafter\DN@\space{\xyFN@\PATHit@i}%
 \else \ifx *\next \DN@*##1##{\PATHlabelit@@{!C##1}}%
 \else \addAT@\ifx\next \addAT@\DN@##1##{\PATHlabelit@@{\dir##1}}%
 \else \ifx [\next \DN@[##1]{%
 \expandafter\def\expandafter\PATHitshape@@\expandafter{\PATHitshape@@[##1]}%
 \xyFN@\PATHit@i}%
 \else \DN@{\PATHlabelit@@{}}%
 \fi\fi\fi\fi \next@}
\xydef@\PATHfinishsegments@{%
 \ifx\PATHlabelsextra@@\relax \expandafter\PATHfinishsegments@i
 \else
 \expandafter\PATHlabelsextra@@ \fi}
\xydef@\PATHfinishsegments@i{%
 \xy@@{\Clast@@}\the\toks@ \toks@={}%
 \xy@@\setupDirection@
 \PATHpost@@ \PATHaction@/\PATHafter@@
 \PATHcontinue@@}
\xydef@\PATHlastout@@{3}
\xydef@\PATHturn@{\afterCIRorDIAG\PATHturn@cir\PATHturn@diag}
\xydef@\PATHturn@cir{\toks@={\xy@@{%
 \count@=\CIRin@@ \ifnum\count@<4\else\advance\count@-4\fi
 \count@@=\CIRout@@ \ifnum\count@@<4\else\advance\count@@-4\fi
 \ifnum\count@=\count@@ \xyerror@{<turn> cannot be half or full}{%
You asked for a <turn>ed segment with parallel in- and out-direction.^^J%
This is not allowed because it is not possible to position it uniquely.}\fi}}%
 \edef\next@{{\CIRin@@}{\expandafter\noexpand\CIRorient@@}{\CIRout@@}}%
 \expandafter\PATHturn@i\next@}
\xydef@\PATHturn@diag{\toks@={\xy@@{%
 \setupDirection@ \count@=\CIRin@@ \dimen@=\xydashl@ \ABfromdiag@
 \ifdim \sinDirection\A@>\cosDirection\B@ \def\CIRorient@@{\CIRacw@}%
 \advance\count@\ifnum\count@<6 \tw@ \else -6\fi
 \else \def\CIRorient@@{\CIRcw@}%
 \advance\count@\ifnum\count@>\@ne -\tw@ \else 6\fi
 \fi
 \edef\CIRout@@{\the\count@}}}%
 \edef\next@{{\CIRin@@}{}{}}%
 \expandafter\PATHturn@i\next@}
 \def\CIRin@@{#1}\def\CIRorient@@{#2}\def\CIRout@@{#3}%
\let\CIRin@@=\PATHlastout@@ \fi
 \edef\nextii@{\expandafter\removePT@\the\B@}%
 \drop@\literal@{\hbox\bgroup\cir@i}}%
 \edef\PATHpostpos@@{\X@c=\the\X@p \Y@c=\the\Y@p \noexpand\czeroEdge@
 \noexpand\PATHomitslide@@true}}%
 \edef\PATHlastout@@{\CIRout@@}%
\next@\expandafter{\the\toks@}\toks@={}%
 \let\PATHextra@@=\empty
 \def\PATHpost@@{\xy@@\PATHpostpos@@}%
 \let\PATHlabelsextra@@=\relax
\xydef@\PATHturn@ii{%
 \ifx /\next
 \DN@ /{\afterassignment\nextii@\dimen@=}%
 \DNii@{%
 \edef\next@{\noexpand\xy@@{\edef\noexpand\turnradius@{\the\dimen@}}}\next@
 \xy@@\pfromc@ \PATHafterPOS{\xyFN@\PATHsegment@}}%
 \else \DN@{\xy@@\pfromc@ \PATHafterPOS{\xyFN@\PATHsegment@}}\fi
 \next@}
\xydef@\ABfromdiag@{\ifcase\count@\relax
 \A@=-.7071\dimen@ \B@=-.7071\dimen@ \or \A@=\z@ \B@=-\dimen@
 \or \A@=+.7071\dimen@ \B@=-.7071\dimen@ \or \A@=\dimen@ \B@=\z@
 \or \A@=+.7071\dimen@ \B@=+.7071\dimen@ \or \A@=\z@ \B@=+\dimen@
 \or \A@=-.7071\dimen@ \B@=+.7071\dimen@ \or \A@=-\dimen@ \B@=\z@
 \else\xybug@{impossible <diag>?}\fi}
\xydef@\turnradius@{10pt}
\xydef@\turnradius{\afterADDOP{\Addop@@\turnradius@}}
 \let\afteraliases@@=\empty \xyFN@\PATHlabelalias@}
 \let\afteraliases@@=\empty \xyFN@\PATHlabelalias@}
\xy@@ix@\expandafter{\PATHitshape@@\labelbox{#3}}%
\xy@@ix@\expandafter{\PATHitshape@@#2{#3}}\fi
\drop\labelbox{#2}\else \drop#1{#2}\fi
 \def\afteraliases@@{\xy@@\Cbreak@@}\xyFN@\PATHlabelalias@}
\xydef@\labelmargin{\afterADDOP{\Addop@@\labelmargin@}}
\xydef@\hole{\hbox{\dimen@=\objectmargin@ \kern2\dimen@
 \vrule height\dimen@ depth\dimen@ width\z@}}
\xydef@\PATHlabelalias@{%
 \ifx \space@\next \expandafter\DN@\space{\xyFN@\PATHlabelalias@}%
 \else \addEQ@\ifx \next
 \addEQ@\DN@"##1"{\savealias@{##1}\xyFN@\PATHlabelalias@}%
 \else \DN@{\afteraliases@@\PATHlabels@}\fi\fi \next@}
\xydef@\ar{\relax\arSAFE}
\xydef@\arSAFE{%
 \let\arvariant@@=\empty
 \def\arstemprefix@@{\dir}%
 \edef\artail@@{\arvariant@@{}}%
 \edef\arstem@@{\arvariant@@{-}}%
 \edef\arhead@@{\arvariant@@{>}}%
 \def\armodifiers@@{}%
 \def\arafterPOS@@{}%
 \def\arlabels@@{}%
 \def\arinit@@{}%
 \def\arexit@@{}%
 \let\PATHlabelabove@@=\PATHlabelabove@
 \let\PATHlabelbelow@@=\PATHlabelbelow@
 \xyFN@\ar@}
\xydef@\ar@{%
 \ifx \space@\next \expandafter\DN@\space{\xyFN@\ar@}%
 \else \addAT@\ifx\next \addAT@\DN@{\xyFN@\ar@form}%
 \else\ifx |\next
 \DN@ |{\ar@anchor|}%
 \else\ifx ^\next
 \DN@ ^{\ar@anchor^}%
 \else\ifx _\next
 \DN@ _{\ar@anchor_}%
 \else \let\next@=\ar@x \fi\fi\fi\fi\fi \next@}
\def\expandafter\arinit@@\expandafter{\arinit@@ #1}}
\def\expandafter\arlabels@@\expandafter{\arlabels@@ #1}}
 \def\PATHlabelit@@##1##2{%
 \DN@{##1}\ifx\next@\empty
 \DN@####1{\expandafter\endgroup\expandafter\addtoarlabels@
 \expandafter{\the\toks@####1{##2}}\xyFN@\ar@}%
 \else
 \DN@####1{\expandafter\endgroup\expandafter\addtoarlabels@
 \expandafter{\the\toks@*!C####1##1{##2}}\xyFN@\ar@}\fi
 \expandafter\next@\expandafter{\PATHitshape@@}}%
 \def\xy@##1##2{\addtotoks@{##1}}\change@oxy@\xy@ \let\xy@@ix@=\eat@
\xydef@\ar@form{%
 \ifx \space@\next \expandafter\DN@\space{\xyFN@\ar@form}%
 \else\ifx ^\next \DN@ ^{\xyFN@\ar@style}\edef\arvariant@@{\string^}%
 \else\ifx _\next \DN@ _{\xyFN@\ar@style}\edef\arvariant@@{\string_}%
 \else\ifx 0\next \DN@ 0{\xyFN@\ar@style}\def\arvariant@@{0}%
 \else\ifx 1\next \DN@ 1{\xyFN@\ar@style}\def\arvariant@@{1}%
 \else\ifx 2\next \DN@ 2{\xyFN@\ar@style}\def\arvariant@@{2}%
 \else\ifx 3\next \DN@ 3{\xyFN@\ar@style}\def\arvariant@@{3}%
 \else\ifx \bgroup\next \let\next@=\ar@style
 \else\ifx [\next \DN@[##1]{\ar@modifiers{[##1]}}%
 \else\ifx *\next \DN@ *{\ar@modifiers}%
 \else\addLT@\ifx\next \let\next@=\ar@slide
 \else\ifx /\next \let\next@=\ar@curveslash
 \else\ifx (\next \let\next@=\ar@curveinout
 \else\addRQ@\ifx\next \addRQ@\DN@{\ar@curve@}%
 \else\addLQ@\ifx\next \addLQ@\DN@{\xyFN@\ar@curve}%
 \else\addDASH@\ifx\next \addDASH@\DN@{\defarstem@-\xyFN@\ar@}%
 \else\addEQ@\ifx\next \addEQ@\DN@{\def\arvariant@@{2}\defarstem@-\xyFN@\ar@}%
 \else\addDOT@\ifx\next \addDOT@\DN@{\defarstem@.\xyFN@\ar@}%
 \else\ifx :\next \DN@:{\def\arvariant@@{2}\defarstem@.\xyFN@\ar@}%
 \else\ifx ~\next \DN@~{\defarstem@~\xyFN@\ar@}%
 \else\ifx !\next \DN@!{\dasharstem@\xyFN@\ar@}%
 \else\ifx ?\next \DN@?{\ar@upsidedown\xyFN@\ar@}%
 \else \let\next@=\ar@error
 \fi\fi\fi\fi\fi\fi\fi\fi\fi\fi\fi\fi\fi\fi\fi\fi\fi\fi\fi\fi\fi\fi \next@}
 \edef\next@##1{\def\noexpand\artail@@{\arvariant@@{##1}}}%
 \edef\next@##1{\def\noexpand\arhead@@{\arvariant@@{##1}}}%
\xydef@\dasharstem@{%
 \DN@##1##{\nextii@{##1}}\DNii@##1##2{\def\arstem@@{##1{##2##2}}}%
 \expandafter\next@\arstem@@}
\xydef@\ar@style{%
 \ifx \bgroup\next \def\artail@@{{}}%
 \edef\arstem@@{\arvariant@@{-}}\edef\arhead@@{\arvariant@@{>}}%
 \expandafter\ar@i
 \else \resetvariant@\artail@@ \resetvariant@\arstem@@ \resetvariant@\arhead@@
 \expandafter\xyFN@\expandafter\ar@ \fi}
 \else \DN@{\nextii@{##1}}\fi \next@}%
\edef\arstem@@{\arvariant@@{}}\edef\arhead@@{\arvariant@@{}}%
 \let\arcomponent@@=\ar@ii \let\arcomponenttype@@=\artip@
\xydef@\ar@ii{\ifx $\next \let\next@=\ar@iv
 \else \expandafter\def\expandafter\artail@@\expandafter{\the\toks@}%
 \let\arcomponent@@=\ar@iii \let\arcomponenttype@@=\arconn@
 \DN@{\xyFN@\arcomponent@}\fi \next@}
\xydef@\ar@iii{%
 \expandafter\def\expandafter\arstem@@\expandafter{\the\toks@}%
 \resetvariant@\artail@@
 \let\arcomponent@@=\ar@iv \let\arcomponenttype@@=\artip@
 \xyFN@\arcomponent@}
\xydef@\ar@iv{%
 \expandafter\def\expandafter\arhead@@\expandafter{\the\toks@}%
 \ifx $\next \DN@ ${\xyFN@\ar@}%
 \else \xyerror@{illegal <arrow>: \meaning\next\space not valid}{}\fi \next@}
\xydef@\ar@x{%
 \let\arsavedPATHafterPOS@@=\PATHafterPOS \let\PATHafterPOS=\arafterPOS@
 \toks@={\ar@PATH}%
 \expandafter\addtotoks@\expandafter{\expandafter{\artail@@}}%
 \expandafter\addtotoks@\expandafter{\expandafter{\arstem@@}}%
 \expandafter\addtotoks@\expandafter{\expandafter{\arstemprefix@@}}%
 \expandafter\addtotoks@\expandafter{\expandafter{\arhead@@}}%
 \expandafter\addtotoks@\expandafter{\expandafter{\armodifiers@@}}%
 \expandafter\addtotoks@\expandafter{\expandafter{\arinit@@}}%
 \expandafter\addtotoks@\expandafter{\expandafter{\arexit@@}}%
 \expandafter\addtotoks@\expandafter{\expandafter{\arlabels@@}}%
 \addtotoks@{\afterar@@}%
 \expandafter\DNii@\expandafter{\the\toks@}\toks@={}%
 \nextii@}
{\xyuncatcodes \gdef\next#1#2#3#4#5#6#7#8#9{%
 \def\next{%
 \afterPATH{#9}%
 ~={#6\preconnect#5#3#2}%
 ~/{#7}%
 ~<{|<*h#5\dir#1}%
 ~>{|>*h#5\dir#4}%
 ~+{#8}%
 }%
 \next}}
\xydef@\arcomponent@{%
 \ifx ^\next \toks@ii={^}\DN@ ^{\xyFN@\arcomponent@i}%
 \else\ifx _\next \toks@ii={_}\DN@ _{\xyFN@\arcomponent@i}%
 \else\ifx 1\next \toks@ii={1}\DN@ 1{\xyFN@\arcomponent@i}%
 \else\ifx 2\next \toks@ii={2}\DN@ 2{\xyFN@\arcomponent@i}%
 \else\ifx 3\next \toks@ii={3}\DN@ 3{\xyFN@\arcomponent@i}%
 \else\ifx \bgroup\next \expandafter\toks@ii\expandafter{\arvariant@@}%
 \let\next@=\arcomponent@i
 \else\ifx *\next \DN@*##1##{\arcomponent@ii{##1}}%
 \else \expandafter\toks@ii\expandafter{\arvariant@@}\toks@={}%
 \let\next@=\arcomponenttype@@
 \fi\fi\fi\fi\fi\fi\fi
 \next@}
\xydef@\artip@{%
 \addGT@\ifx\next \addGT@\addtotoks@ \addGT@\DN@{\xyFN@\artip@}%
 \else\addLT@\ifx\next \addLT@\addtotoks@ \addLT@\DN@{\xyFN@\artip@}%
 \else\ifx (\next \addtotoks@(\DN@({\xyFN@\artip@}%
 \else\ifx )\next \addtotoks@)\DN@){\xyFN@\artip@}%
 \else\ifx |\next \addtotoks@|\DN@|{\xyFN@\artip@}%
 \else\addLQ@\ifx\next \addLQ@\addtotoks@ \addLQ@\DN@{\xyFN@\artip@}%
 \else\addRQ@\ifx\next \addRQ@\addtotoks@ \addRQ@\DN@{\xyFN@\artip@}%
 \else\addPLUS@\ifx \next \addPLUS@\addtotoks@ \addPLUS@\DN@{\xyFN@\artip@}%
 \else\ifx /\next \addtotoks@/\DN@/{\xyFN@\artip@}%
 \else\ifcat A\noexpand\next \DN@##1{\addtotoks@{##1}\xyFN@\artip@}%
 \else\ifx\space@\next \addtotoks@{ }\expandafter\DN@\space{\xyFN@\artip@}%
 \else \let\next@=\arcomponent@x
 \fi\fi\fi\fi\fi\fi\fi\fi\fi\fi\fi \next@}
\xydef@\arconn@{%
 \addDASH@\ifx\next \addDASH@\addtotoks@ \addDASH@\DN@{\xyFN@\arconn@}%
 \else\addEQ@\ifx\next \addEQ@\addtotoks@ \addEQ@\DN@{\xyFN@\arconn@}%
 \ifx\arvariant@@\empty \def\arvariant@@{2}\fi
 \else\addDOT@\ifx\next \addDOT@\addtotoks@ \addDOT@\DN@{\xyFN@\arconn@}%
 \else\ifx :\next \addtotoks@:\DN@:{\xyFN@\arconn@}%
 \ifx\arvariant@@\empty \def\arvariant@@{2}\fi
 \else\ifx ~\next \addtotoks@~\DN@~{\xyFN@\arconn@}%
 \else \let\next@=\arcomponent@x
 \fi\fi\fi\fi\fi \next@}
\xydef@\arcomponent@x{%
 \DN@##1{\toks@=\expandafter{\the\toks@ii{##1}}}%
 \expandafter\next@\expandafter{\the\toks@}%
 \xyFN@\arcomponent@@}
 \gdef\next(#1,#2){{+/#1 3pc/,p+/#2 3pc/}}}
\xydef@\ar@curveinout{\expandafter\ar@curve@load\ar@curveinout@}
\xydef@\ar@curve{%
 \ifx \space@\next \expandafter\DN@\space{\xyFN@\ar@curve}%
 \else\ifx \bgroup\next \let\next@=\ar@curve@load
 \else\ifx "\next \DN@"##1"{\ar@curve@{"##1"}}%
 \else \xyerror@{@= <form> must be followed by \string"<id>\string" or
 {<control point list>}}{}%
 \fi\fi\fi \next@}
{\xyuncatcodes \gdef\next#1#2{\def#1##1{#2{;@={##1}}}}}
 \def\arstemprefix@@{\crvi}\xyFN@\ar@}
\gdef\next#1{\save @(,#1\restore}}
\addtoarinit@\expandafter{\setcurvearinit@i{#1}}}
\gdef\next#1{\def#1{\POS @i @) }}}
\xydef@\curve@check{%
 \xyerror@{Forms @/.../, @(...), and @`{...}, only available when curve
 extension loaded}{}}
\def\expandafter\armodifiers@@\expandafter{\armodifiers@@#1}%
 \gdef\next<#1>{\def\arafterPOS@@{<#1>}\xyFN@\ar@}}
\xydef@\ar@upsidedown{\let\next=\PATHlabelabove@@
 \let\PATHlabelabove@@=\PATHlabelbelow@@ \let\PATHlabelbelow@@=\next}
\begin{document}

\maketitle
\begin{abstract}

A filtration of \Sbmdl s by virtual crossing \bmdl s extends to \Rq's complexes associated with braid words. We show that these complexes are invariant up to filtered homotopy with respect to the second Reidemeister move, and the filtration of the triply graded link diagram homology, constructed by Khovanov through the application of the Hochschild homology, is invariant under Markov moves. We also prove that the homotopy equivalence of the complexes of braid words related by the third Reidemeister move violates filtration by at most two units.
%
%

\end{abstract}

\begin{spacing}{0.65}
\tableofcontents
\end{spacing}

\section{Introduction}

In~\cite{KRO} the \vrcr\ categorification was introduced as a necessary tool in categorifying the \tSOtN\ Kauffman polynomial. In the same paper the categorification of \tSUN\ and \tglkhm\ were also extended to virtual links. The results of that paper inspired an attempt by Emmanuel Wagner~\cite{Wag} to introduce the third grading to the \tSUN\ link homology. Unfortunately, his approach contained substantial errors and the paper~\cite{Wag} was subsequently withdrawn.

In this paper we attempt, in the spirit of Wagner's ideas, to use virtual crossing complexes of~\cite{KRO} in order to introduce a \emph{filtration} in the \tglkhm. A similar filtration can be introduced in the \tSUN\ link homology.


Recall that two chain complexes $\cwdA$ and $\cwdB$ of objects of an additive category $\ctC$ are \het\ if there exists a diagram
\begin{equation}
\label{eq:dgheq}
\xymatrix@C=1.7cm{
\ccA\ar@/^/[r]^{\fAB}
\ar@(ul,dl)[]_-{\hmA}
&
\ccB \ar@/^/[l]^{\fBA}
\ar@(ur,dr)[]^-{\hmB}
}
\end{equation}
whose morphisms satisfy the relaitons
\begin{gather}
\label{eq:dghrel}
\dcB\,\fAB - \fAB\,\dcA = 0,\qquad
\dcA\,\fBA - \fBA\,\dcB = 0,
\\
\fBA\,\fAB - \xIA = [\dcA,\hmA],\qquad
\fAB\,\fBA - \xIB = [\dcB,\hmB].
\end{gather}
We call $\fAB$, $\fAB$ \emph{\hec s} and we call $\hmA$, $\hmB$ \emph{homotopies}.

Suppose that the complexes $\cwdA$ and $\cwdB$ are filtered, that is, the chain objects of $\ccA$, $\ccB$ are filtered and the differentials $\dcA$, $\dcB$ are filtered morphisms. We say that $\cwdA$ and $\cwdB$ are \emph{\fhet} if all morphisms of the diagram~\eqref{eq:dgheq} are filtered.

Let $\shs$ be the filtration shift functor: for a filtered object $A$ we have $\xFlmi (\shs A) = \xFlmimo A$. We say that a morphism between two filtered objects $f\colon A\rightarrow B$ \emph{violates} filtration by $k$ units if
its shifted version $f\colon M\rightarrow \shs^k N$ is filtered.

The definition of the \tghm\ requires a presentation of a link $\xL$ as a circular closure of a \brwd\ $\brwb$, which by definition is a finite sequence of positive and negative elementary braids $\sgi$ and $\sgni$, $1\leq i \leq \bstr-1$, where $\bstr$ is the number of strands in $\brwb$. Following Soergel~\cites{Sgl1,Sgl2} and Rouquier~\cite{Rou}, to elementary braids $\sgi$ and $\sgni$ we associate complexes $\ctsgi$ and $\ctsgni$ of \Sbmdl s. However this time each \Sbmdl\ in these complexes is assigned a descending filtration and the complexes are also filtered, that is, the differentials are filtered homomorphisms. Remarkably, this filtration is related to virtual crossings: an associated graded of a \Sbmdl\ is a sum of \bmdl s corresponding to purely virtual braids.

For an $\bstr$-strand \brwd\ $\brwb$ we introduce two sets of $\bstr$ variables: $\bfx = x_1,\ldots,x_{\bstr}$ and similarly $\bfy$. A filtered \Sbmdl\ $\dgMbxy$ is an object in the derived category $\DQfbxy$ of \Zgrdd\ (\qgrdg), filtered $\Qbxy$-modules. Then a complex of bimodules $\xctvbyx{\brwb}$ associated with $\brwb$ is an object in the category $\ChDQfbxy$ of chain complexes over an additive category $\DQfbxy$. As usual, the complex $\xctvbyx{\brwb}$ is determined by the choice of complexes $\ctsgi$ and $\ctsgni$ and by the composition rule: a complex associated to the product of \brwd s is the tensor product of their complexes over the intermediate variables: $\xctvbzx{\brwb_2\brwb_1} = \xctvbzy{\brwb_2}\otQby \xctvbyx{\brwb_1}$.

Note that an object of $\ChDQfbxy$ is a complex of complexes and, as such, it has two differentials: an \tinn\ differential corresponding to $\Drv(\xdmm)$ and an \tout\ one corresponding to $\ctChv{\xdmm}$. We denote the corresponding homologies as $\Hmint$ and $\Hmext$ respectively.

%
%

Let $\xLbb$ be the link diagram which is a circular closure of a \brwd\ $\brwb$. We define its associated complex $\ctxLbb$ as the result of replacing the \bmdl s of $\ctbrwbbxy$ by their derived tensor products with the `diagonal' \bmdl\ corresponding to the identity braid $\xIdbstr$ (see \ex{eq:cmplb}).
The complex $\ctxLbb$ is an object in the category of complexes over the homotopy category of complexes of filtered, graded vector spaces: $\ChKQfb$. The homology of $\ctxLbb$ with respect to the \tinn\ differential is the \Hchh\ of $\ctbrwbbxy$. The subsequent application of the \tout\ homology produces the filtered version of the \tgrd\ homology of $\xLbb$: $\Htg(\ctxLbb) = \Hmext\bigl(\Hmint(\ctxLbb)\bigr)$.


A braid is determined by a \brwd\ up to the second and third Reidemeister moves:
\[
\sgi \sgni = \xIdbrn\;\; \text{(\Rdta)},\qquad\sgni\sgi=\xIdbrn\;\;\text{(\Rdtb)},\qquad
\sgni\sgnio\sgni = \sgnio \sgni \sgnio\;\;\text{(\Rdth)},
\]
where $\xIdbrn$ is the unit braid with $\bstr$ strands. We will show that if two \brwd s
are related by a \sRm, then their complexes are \fhet\ (Theorem~\ref{th:R2}).
If \brwd s $\brwbo$ and $\brwbt$ are related by a \tRm, then we can \emph{not} prove the \fhec\ of their complexes. However, we prove that the \hec s established by \Rq\
\[
\xymatrix@C=1.7cm{
\ctbrwbo\ar@/^/[r]^{f_{12}}
&
\ctbrwbt \ar@/^/[l]^{f_{21}}
}
\]
violate filtration by at most two units (Theorem~\ref{th:trrx}).


An \orfr\ link is determined by a braid up to the first and second Markov moves:
\[
\xLv{\brdbo\brdbt} \lkeq \xLv{\brdbt\brdbo}\;\;\text{(\Mo)},\qquad
\xLv{(\sgo\sqbr\xIdbrn)(\xIdbro\sqbr\brdb)} = \xfLbo\;\;\text{(\Mta)},\qquad
\xLv{(\sgomo\sqbr\xIdbrn)(\xIdbro\sqbr\brdb)} = \xfLbmo\;\;\text{(\Mtb)},
\]
We will show that if two \brwd s $\brwbo$ and $\brwbt$ are related by \Mo\ or \Mta, then the complexes of their closures are \fhet: $\ctxLbbo\sim\ctxLbbt$, 
whereas if they are related by \Mtb, then the relation between their complexes is weaker: their \tinn\ homologies are \fhet\ (Remark~\ref{rm:mrmvo} and Theorem~\ref{th:mrkmvt}). As a consequence, if $\brwbo$ and $\brwbt$ are related by any sequence of Markov moves, then their \tinn\ homologies are \fhet: $\Hmint(\ctxLbbo)\sim\Hmint(\ctxLbbt)$ and their
\tgrd\ filtered homologies are isomorphic: $\Htg(\ctxLbbo)\cong\Htg(\ctxLbbt)$.
%


In Section~\ref{s:fhtbr} we find a simple presentation of the filtered complex of a two-strand \brwd\ (Theorem~\ref{th:fhtbr}) and compute the filtered homology of two-strand torus knots and links presented as their closures (Theorem~\ref{th:fhmtbr}).

\subsection{Acknowledgements}
M.A. and L.R. are thankful to Mikhail Khovanov for numerous illuminating discussions. L.R. is thankful to Eugene Gorsky for sharing the results of his unfinished research and presenting arguments in favor of the existence of an extra grading in the triply graded link homology.

The work of L.R. was supported in part by the NSF grant DMS-1108727.

\section{Filtered \Rqc\ of a \brwd}

\subsection{Overview}

A convenient description of algebraic constructions related to all versions of the \tglkhm\ uses various versions of a monoidal 2-category of points $\ctpt$. An object in this category is a finite number of ordered points: $\ptn$ is an object of $\bstr$ points. A monoidal product of two objects is their ordered disjoint union $\ptno\ptpr\ptnt$. In this paper we assume that the only morphisms between objects are endomorphisms. An endomorphism of an object is a 1-dimensional cobordism of some sort: a braid, a braid word, a graph-braid, a virtual braid, \etc. The endomorphisms are composed as cobordisms and their monoidal product is again a disjoint union. The set of endomorphisms $\End\ptn$ usually has a category structure: a morphism between two 1-dimensional cobordisms is a 2-dimensional cobordism.

We use a shortcut notation $\bfx=x_1,\ldots,x_{\bstr}$ for a list of indexed variables, with $\nmv{\bfx}=\bstr$ denoting the number of variables in a list.

A link homology construction involves a \tctfn\ $\ctdm$ from the 2-category $\ctpt$ to a monoidal algebraic 2-category.
To an object $\ptn$ the functor associates an algebra $\Qbx$, $\nmv{\bfx}=\bstr$, the variables $\bfx=x_1,\ldots,x_{\bstr}$ being associated with the point of $\ptn$.
Then the functor $\ctdm$ maps the category of endomorphisms $\Eptn$ to an appropriate category of (graded and filtered) bimodules over $\Qbx$, that is, to a derived category of $\Qbxy$-modules or to a category of chain complexes over it. We refer to these categories as \emph{\tgcts}.


To a composition of endomorphisms in $\Eptn$ the functor $\ctdm$ associates the composition of bimodules defined as the (derived) tensor product over the intermediate variables, whose action is then forgotten (we call such variables \emph{\tdmmy}), while to the disjoint union of endomorphisms one associates the tensor product of bimodules over $\IQ$. Finally, to a morphism between two endomorphisms in $\Eptn$ the functor $\ctdm$ associates a homomorphism of corresponding bimodules or a chain map between their complexes.

%
%

The language of the category $\ctpt$ allows a simple definition of the endomorphisms $\Eptn$: they are generated by the identity endomorphism $\Idpto\in\End\pto$, which we denote graphically as
$\Idpto=\zverlv{\lncf}{5}$,
and by a finite number of `simple' endomorphisms $\ene_i\in\End\ptt$ through composition and monoidal product modulo certain relations such as the symmetry group relations or the braid group relations. The functor $\ctdm$ is defined by its action on the generators: $\xctv{\Idpto}$ and $\xctv{\ene_i}$, and the main challenge is to verify that, thus defined, $\ctdm$ respects the relations between them.

In this paper we consider three types of endomorphisms in $\Eptn$. The endomorphisms of the first type form the group of virtual braids or, equivalently, the symmetric group $\Smgn$. Apart from the unit element $\zverlv{\lncf}{5}$, virtual braids are generated by the elementary virtual braid (\ie permutation) $\gnstot\in\Eptt$, presented graphically as $\gnstot=\zvirtv{\lncf}{5}$, modulo the relations of $\Smgn$ (\ie the purely virtual second and third Reidemeister moves). A generator $\gnsti$ of $\Smgn$ has a graphical presentation as a virtual braid:
\[
\gnsti = \cmbai{\zvirtv{\lncf}{5}}.
\]

The second type of endomorphisms in $\Eptn$ are \emph{\tbrgr s}. These endomorphisms are generated freely by $\zverlv{\lncf}{5}$ and by two \blgn s $\zblbbv{\lncf}{5},\zblbgv{\lncf}{5}\in\Eptt$. More precisely, the monoid of $\bstr$-strand \tbrgr s is freely generated by two elements:
\begin{equation}
\label{eq:brdgpm}
\fblbi = \cmbai{\zblbgv{\lncf}{5}},\qquad
\fblgi = \cmbai{\zblbbv{\lncf}{5}}.
\end{equation}


One can mix \tbrgr s with virtual braids modulo the relations
\begin{equation}
\label{eq:virblb}
\zblbvir{\lncf}{-11} \cong \zvirblb{\lncf}{-11} \cong \zblbgv{\lncf}{5},
\end{equation}
which, in view of the second virtual Reidemeister move, imply
\begin{equation}
\label{eq:virblg}
\zblgvir{\lncf}{-11} \cong \zvirblg{\lncf}{-11} \cong \zblbbv{\lncf}{5}.
\end{equation}

The third type of endomorphisms in $\Eptn$ are \brwd s which form a monoid. These monoids are generated freely by $\zverlv{\lncf}{5}$ and by two elementary braids $\zbrdpv{\lncf}{5},\zbrdmv{\lncf}{5}\in\Eptt$. In other words, the monoid of $\bstr$-strand \brwd s is generated by the elements
\[
\sgmp_i = \cmbai{\zbrdpv{\lncf}{5}},\qquad
\sgmm_i = \cmbai{\zbrdmv{\lncf}{5}}.
\]

One could pass from the \brwd\ monoid to the braid group by imposing the second and third Reidemeister move relations, but we do not do it, because in our filtered context we can not prove the invariance of the functor $\ctdm$ under the third Reidemeister move.

As usual, braids can be mixed with virtual braids and \tbrgr s, but we do not study possible relations, except the following two:
\begin{equation}
\label{eq:virbr}
\zvirbrpvir{\lncf}{-23} \cong \zbrdpv{\lncf}{5},\qquad
\zvirbrmvir{\lncf}{-23} \cong \zbrdmv{\lncf}{5}
\end{equation}

\subsection{Target categories}
Our target categories are the usual ones with an extra filtration. The first target category is the derived category of filtered, \Zgrdd\ $\Qbxy$-modules: $\DQfbxy$. The $\ZZ$-grading and its associated \tqdgr\ comes from assigning degrees to variables:
$\xdgq x_i = \xdgq y_i = 2$; this grading is always present and we drop it from the category notations. Apart from filtration and \qgrdg, the category $\DQfbxy$ has a homological \Zgrdg\ with \tadgr\ denoted as $\xdga$. It determines all sign factors associated with homological algebra. The category $\DQfbxy$ is endowed with three degree shift endofunctors related to the \qgrdg, \agrdg\ and filtration: $\shq$, $\sha$ and $\shs$.

A filtered $\Qbxy$-module is called \emph{\tfsf} if it is split-free as a $\Qbx$-module and as a $\Qby$-module. All bimodules in this paper are \tfsf, therefore one can use an ordinary tensor product over the intermediate variables rather than a derived one when composing \bmdl s by taking their tensor product over the intermediate variables.

The category $\DQfbxy$ is additive, and our second target category is the category of bounded chain complexes over it: $\ChDQfbxy$. An object of $\ChDQfbxy$ is a `complex of complexes'. This category has yet another shift endofunctor $\sht$ whose associated grading reflects the homological degree within the complexes of $\ctCh(\xdmm)$. However $\dgt$ is not a homological degree, the latter being $\dga$, and, in fact, $\dgt$ may take semi-integer values. In order to simplify our formulas, we introduce combined shift endofunctors:
\[
\saqt = \sha\shq^2,\qquad\ssqt=\shs\shq^2,\qquad\stat=\sht\sha,\qquad\stashf = (\sht\shs)^{1/2}.
\]

In order to distinguish the \tinn\ and \tout\ complexes within a complex of complexes, we put the \tinn\ complexes in boxes, while the \tout\ complexes are in square brackets. Here are two examples of this notation:
%
\[
\left[\;
\stat\,
\begin{tikzpicture}[baseline=-0.25em]
\matrix (m) [ssmtf,column sep=2em]
{\sha\,M_1 & M_2
&& \sha\,N_1 & N_2\\};
\node (pz) at (m-1-2) [xshift=1.25em] {};
\node (po) at (m-1-4) [xshift=-1.5em] {};
\path[->,font=\scriptsize]
(m-1-1) edge node[auto] {$g_M$} (m-1-2)
(m-1-4) edge node[auto] {$g_N$} (m-1-5)
(pz) edge node[auto] {$f$} (po);
\node [draw,fit=(m-1-1) (m-1-2)] {};
\node [draw,fit=(m-1-4) (m-1-5)] {};
\end{tikzpicture}
\;\right]
=
\left[\;
\stat\,
\begin{tikzpicture}[baseline=-0.25em]
\matrix (m) [ssmtf,column sep=5em, row sep=2em]
{\sha\,M_1 & \sha\,N_1 \\ M_2 & N_2\\};
\node (pz) at (m-1-1) [xshift=1.70em] {};
\node (po) at (m-1-2) [xshift=-1.65em] {};
\node (qz) at (m-2-1) [xshift=1.70em] {};
\node (qo) at (m-2-2) [xshift=-1.65em] {};
\path[->,font=\scriptsize]
(m-1-1) edge node[auto] {$g_M$} (m-2-1)
(m-1-2) edge node[auto] {$g_N$} (m-2-2)
(pz) edge node[auto] {$f_1$} (po)
(qz) edge node[auto] {$f_2$} (qo);
\node [draw,fit=(m-1-1) (m-2-1) ] {};
\node [draw,fit=(m-1-2) (m-2-2)] {};
\end{tikzpicture}
\;\right],
\]
where $M_1$, $M_2$, $N_1$ and $N_2$ are $\Qbxy$-modules, while $g_M$, $g_N$, $f_1$ and $f_2$ are homomorphisms between them. Note that the \rhs presents the chain morphism $f$ explicitly in terms of its components $f_1$ and $f_2$.


\subsection{Filtered modules and their resolutions}
Let $M = M_0\supset \cdots \supset M_m$
be a filtered $\Qbxy$-module representing an object in the derived category $\DQfbxy$. Denote $N_i = M_i/M_{i+1}$ the associated graded submodules. Then a filtered projective resolution $\Prrv{M}$ of $M$ can be constructed as a multiple cone, that is, a sum of projective resolutions $\Prrv{N_i}$ with added differentials $f_{ij}\colon N_i\rightarrow N_j$, $i<j$, which are not necessarily closed, except when $j=i+1$.
For example, if the filtration depth $m$ is three, then
\[
\Prrv{M}\sim
\boxed{
\xymatrix{
\Prrv{N_0}
\ar[r]^{f_{01}}
\ar@/^2pc/[rr]^-{f_{02}}
\ar@/^4pc/[rrr]^-{f_{03}}
&
\shs \Prrv{N_1}
\ar[r]^{f_{12}}
\ar@/_2pc/[rr]_-{f_{13}}
&
\shs^2 \Prrv{N_2}
\ar[r]^{f_{23}}
&
\shs^3 \Prrv{N_3}
}
}.
\]
We call this picture a \fdg\ of $M$.

Let the $\Qbxy$-module $M$ have a filtration of depth $m=1$: $M = M_0\supset M_1$, then $N_0 = M_0/M_1$ and these modules form an exact sequence
\begin{equation}
\label{eq:exsqo}
0\rightarrow M_1 \rightarrow M \rightarrow N_0\rightarrow 0.
\end{equation}
The latter turns into an exact triangle of filtered resolutions:
\[
\begin{tikzpicture}[baseline=-0.25em]
\matrix (m) [ssmts]
{& \Prrv{N_0} & \Prrv{N_0} & \Prrv{M_1}
\\
\shs\Prrv{M_1} & \shs \Prrv{M_1}
\\};
\node (xup) at (m-1-2) [xshift=+2.75em]{};
\node (xdn) at (m-2-2) [xshift=-2.75em]{};
\path[->,font=\scriptsize]
(m-2-1) edge node[auto] {$\xId$} (xdn)
(xup) edge node[auto] {$\xId$} (m-1-3)
(m-1-2) edge node[auto] {$f$} (m-2-2)
(m-1-3) edge node[auto] {$f$} (m-1-4);
\node [draw, fit=(m-1-2) (m-2-2) ] {};
\end{tikzpicture}
\]
where the chain map $f$ represents the first extension determined by the exact sequence~\eqref{eq:exsqo}.

\subsection{Symmetric group and special bimodules}

The simplest choice of $\Eptn$ is the symmetry group: $\Eptn = \Smgn$. From the topological point of view $\Smgn$ is the group of purely virtual braids (a categorification of the whole virtual braid group is discussed in~\cite{Th}).
From the $\ctpt$ point of view, the groups $\Smgn$ are generated by the identity 1-strand braid $\Idpto\in\Epto$ and by the elementary permutation $\gnstot\in\Eptt$ which generates $\Smgt$. These generators have a graphic presentation: $\Idpto=\zverlv{\lncf}{5}$,  $\gnstot = \zvirtv{\lncf}{5}$. Note that we do not put a circle around a virtual crossing.

The \tgct\ for $\Eptn=\Smgn$ is just $\DQfbxy$ and we set
\[
\sfverl=
\Qxy/(y-x),\qquad
\xctv{
 \zvirtv{\lncf}{5}
} = \Qbxy/(y_2 - x_1,y_1 - x_2), \quad\nmv{\bfx}=\nmv{\bfy} = 2.
\]
This choice dictates the bimodule to be associated with a general permutation $\gnst\in\Smgn$:
\begin{equation}
\label{eq:spbmdq}
\xctv{\gnst} = \Qbxy/(y_{\gnst(1)}-x_1,\ldots y_{\gnst(\bstr)} - x_\bstr).
\end{equation}
All bimodules $\xctv{\gnst}$ are endowed with the trivial filtration: $\xFlz\xctv{\gnst} = \xctv{\gnst}$ and $\xFli\xctv{\gnst} = 0$ if $i>0$. It is easy to see that the bimodules~\eqref{eq:spbmdq} satisfy $\Smgn$ relations, that it,
\begin{equation}
\label{eq:tpcmp}
\xctvbzy{\gnst'}\otQby\xctvbyx{\gnst}\cong\xctvbzx{\gnst'\gnst}
\end{equation}
for any $\gnst,\gnst'\in\Smgn$.
\begin{remark}
\label{rm:tpsb}
The tensor product functors $\xctvbyx{\gnst}\otQbx\dmmy$ and $\dmmy\otQby\xctvbyx{\gnst}$ act by permuting the corresponding variables.
\end{remark}


The bimodule~\eqref{eq:spbmdq} can be presented as a tensor product of $\sfverl$-like bimodules over $\IQ$:
$\xctv{\gnst} \cong \bigotimes_{i=1}^{\bstr} \xctvv{\zverlv{\lncf}{5}}{y_{\gnst(i)}}{x_i}$.
Denote the \emph{canonical} \Kr\ of the `diagonal' bimodule $\sfverl$ as $\dgbyx$:
\begin{equation}
\label{eq:stkzrdo}
\Prcnxv{\sfveryx}=\dgbyx =
\boxed{
\xymatrix@C=1.2cm{
\saqt\,\Qxy\ar[r]^-{y - x}
&
\Qxy
}
}.
\end{equation}
The index `$\mcan$'
means `canonical'.
The \cKr\ of the bimodule~\eqref{eq:spbmdq} is defined as the tensor product of resolutions~\eqref{eq:stkzrdo}:
\begin{equation}
\label{eq:cnrsvK}
\Prcnxv{\xctv{\gnst}} = \bigotimes_{i=1}^\bstr \dgbv{y_{\gnst(i)}}{x_i}.
\end{equation}

Denote
\begin{equation}
\label{eq:dfdfv}
\xp = x_1 + x_2,\qquad \xm = x_2 - x_1,\qquad \yp=y_1+y_2,\qquad \ym=y_2-y_1.
\end{equation}
Then the canonical \Kszl\ resolutions of $\sfparl$ and $\sfvirt$ split into tensor products
of complexes of $\Qv{\xp,\yp}$-modules and complexes of $\Qv{\xm,\ym}$-modules:
\begin{equation}
\label{eq:cmfct}
\sfnparl = \dgbyxp\otimes \sfmparl
,\qquad
\sfnvirt = \dgbyxp\otimes\sfmvirt
,
\end{equation}
where
\begin{equation}
\label{eq:dfvrtm}
\sfmparl=\dgbv{\ym}{\xm},\qquad\sfmvirt=\dgbv{\ym}{-\xm}.
\end{equation}
Note that $\dgbyxp$ is a common factor in these products.
We introduce special morphisms $\vsdp\in\Ext^1\bigl(\sfparl,\sfvirt\bigr)$ and
$\vsdm\in\Ext^1\bigl(\sfvirt,\sfparl\bigr)$: they act as identity on the common factor $\dgbv{\yp}{\xp}$, while acting on the second factors as follows:
\begin{equation}
\label{eq:mcns}
\begin{tikzpicture}[baseline=-0.25em]
\matrix (m) [ssmth, column sep=1.5em, row sep = 5em]
{\sfmparl
&
\saqt\,\Qmxy
&&
\Qmxy
\\
\sfmvirt
&
\saqt\,\Qmxy
&&
\Qmxy
\\};
\node (P12) at (m-1-2) [yshift=-1.75em] {};
\node (P24) at (m-2-4) [yshift=1.75em] {};
\path[->,font=\tiny]
(m-1-1) edge node[auto] {$\vsdp$} (m-2-1)
(m-1-2) edge node[auto] {$\ym-\xm$} (m-1-4)
(m-2-2) edge node[auto] {$\ym+\xm$} (m-2-4)
(P12) edge node[auto] {$1$} (P24);
\node[draw,fit=(m-1-2) (m-1-4) ] {};
\node[draw,fit=(m-2-2) (m-2-4) ] {};
\path (m-1-1) node [xshift=1.95em] {$\cong$};
\path (m-2-1) node [xshift=1.95em] {$\cong$};
\end{tikzpicture}
\qquad
\begin{tikzpicture}[baseline=-0.25em]
\matrix (m) [ssmth, column sep=1.5em, row sep = 5em]
{\sfmvirt
&
\saqt\,\Qmxy
&&
\Qmxy
\\
\sfmparl
&
\saqt\,\Qmxy
&&
\Qmxy
\\};
\node (P12) at (m-1-2) [yshift=-1.75em] {};
\node (P24) at (m-2-4) [yshift=1.75em] {};
\path[->,font=\tiny]
(m-1-1) edge node[auto] {$\vsdm$} (m-2-1)
(m-1-2) edge node[auto] {$\ym+\xm$} (m-1-4)
(m-2-2) edge node[auto] {$\ym-\xm$} (m-2-4)
(P12) edge node[auto] {$1$} (P24);
\node[draw,fit=(m-1-2) (m-1-4) ] {};
\node[draw,fit=(m-2-2) (m-2-4) ] {};
\path (m-1-1) node [xshift=1.95em] {$\cong$};
\path (m-2-1) node [xshift=1.95em] {$\cong$};

\end{tikzpicture}
\end{equation}
Obviously, $\dga\vsdp = \dga\vsdm = -1$, $\dgq\vsdp = \dgq\vsdm = -2$.

We refer to $\vsdp$ and $\vsdm$ as \tvsd\ morphisms, because they may be considered as the result of applying the \tctfn\ to the \tvsd\ cobordism connecting the virtual braids $\zparlv{\lncf}{5}$ and $\zvirtv{\lncf}{5}$.

For any permutation $\gnst\in\Smgn$ and any transposition $\gnstij$ we define a \tvsd\ morphism \begin{equation}
\label{eq:tvsd}
\vsdijs\in\Ext^1\left(\xctv{\gnst},\xctv{\gnst\gnstij}\right),
\end{equation}
where $\gnst,\gnstij\in\Smgn$ and $\gnstij$ is a transposition. The bimodules $\xctv{\gnst}$ and $\xctv{\gnst\gnstij}$ have a common factor $\xMcm=\bigotimes_{k\neq i,j}\sfvervv{y_{\gnst(k)}}{x_k}$:
\[
\xctvbyx{\gnst} = \xMcm \otimes \xectvv{ \zparlv{\lncf}{5}}
{y_{\gnst(i)},y_{\gnst(j)}}{x_i,x_j},
\qquad
\xctvbyx{\gnst\gnstij} = \xMcm\otimes
\xectvv{\zvirtv{\lncf}{5} }{y_{\gnst(i)},y_{\gnst(j)}}{x_i,x_j}
\]
and $\vsdijs$ acts as identity on $\xMcm$ and as $\vsdp$ on the remaining 2-strand factors.

 %
In view of Remark~\ref{rm:tpsb}, the tensor product with other permutation bimodules transforms the \tvsd\ morphism by relabeling:
\begin{equation}
\label{eq:cmdgs}
\xymatrix@C=2cm@R=1.5cm{
\xctvbwz{\gnst''}\otQbz\xctvbzy{\gnst}\otQby\xctvbyx{\gnst'}
\ar[d]^-{\cong}
\ar[r]^{\xId\otimes\vsdijs\otimes\xId}
&
\xctvbwz{\gnst''}\otQbz\xctvbzy{\gnst\gnst_{ij}}\otQby\xctvbyx{\gnst'}
\ar[d]^-{\cong}
\\
\xctvbwx{\gnst''\gnst\gnst'}
\ar[r]^-{\vsdvv{(\gnst')^{-1}(i),(\gnst')^{-1}(j)}{\gnst''\gnst\gnst''}}
&
\xctvbwx{\gnst''\gnst\gnst_{ij}\gnst'}
}
\end{equation}

%


\subsection{Filtered \Sbmdl s}
\subsubsection{Filtration}
The second step in building up $\Eptn$ is the addition of two \blgn s $\zblbbv{\lncf}{5},\zblbgv{\lncf}{5}\in\Eptt$. The corresponding filtered bimodules are defined by endowing the standard \Sgl\ blob bimodule
\[
\mMblb = \Qbxy/\bigl( (y_1+y_2) - (x_1+x_2),y_1y_2-x_1x_2\bigr) = \Qbxy/\bigl(\yp-\xp,(\ym+\xm)(\ym-\xm)\bigr)
\] with a filtration related to the following exact sequences:
\begin{equation}
\label{eq:exsq}
\xctv{\zblbbv{\lncf}{5}}\colon
\xymatrix@C=1.3cm{
\shq^2\xctv{\zvirtv{\lncf}{5} }
\ar[r]^-{\ym-\xm}
&
\mMblb
\ar[r]^-{1}
&
\xctv{\zparlv{\lncf}{5}},
}
\qquad
\xctv{\zblbgv{\lncf}{5}}\colon
\xymatrix@C=1.3cm{
\shq^2\xctv{\zparlv{\lncf}{5} }
\ar[r]^-{\ym+\xm}
&
\mMblb
\ar[r]^-{1}
&
\xctv{\zvirtv{\lncf}{5}}.
}
\end{equation}
In other words,
\[
\xFlz\sfblbb = \sfblbb,\qquad
\xFlo\sfblbb = \shq^2\sfvirt,\qquad
\xFlz\sfblbg = \sfblbg,\qquad
\xFlo\sfblbg = \shq^2\sfparl.
\]
It is easy to see that the exact sequences~\eqref{eq:exsq} correspond to the extensions $\vsdp$ and $\vsdm$ defined by the diagrams~\eqref{eq:mcns}. Canonical filtered resolutions
of the bimodules $\sfblbb$ and $\sfblbg$ are defined as cones:
\begin{equation}
\label{eq:fdgel}
\xctcnv{\zblbbv{\lncf}{5}} =
\boxed{
\xymatrix{
\xctcnv{\zparlv{\lncf}{5}}
\ar[r]^-{\vsdp}
&
\ssqt\,\xctcnv{\zvirtv{\lncf}{5}}
}
}\, ,
\qquad
\xctcnv{\zblbgv{\lncf}{5}} \simeq
\boxed{
\xymatrix{
\xctcnv{\zvirtv{\lncf}{5}}
\ar[r]^-{\vsdm}
&
\ssqt\,\xctcnv{\zparlv{\lncf}{5}}
}
}\, ,
\end{equation}
boxed diagrams being the \fdg s of the \blbm s.

The disjoint union with \varc s on the left and on the right
turns \ex{eq:fdgel} into \fdg s for the bimodules associated with diagrams~\eqref{eq:brdgpm}:
\begin{equation}
\label{eq:cnrsml}
\ctcnblbi \simeq
\boxed{
\xymatrix{
\ctcnIdptn
\ar[r]^-{\vsdi}
&
\ssqt\,\ctcngnsti
}
}\,,
\qquad
\ctcnblgi \simeq
\boxed{
\xymatrix{
\ctcngnsti
\ar[r]^-{\vsdi}
&
\ssqt\,\ctcnIdptn
}
}\,.
\end{equation}

\subsubsection{Filtered homomorphisms}

The bimodules $\sfblbb$ and $\sfblbg$ are related to the diagonal bimodule $\sfparl$ by two
canonical morphisms $\chp$ and $\chm$ associated with the filtration:
\[
\begin{tikzpicture}[baseline=-0.25em]
\matrix (m) [ssmth, column sep=2.5em]
{
\ssqt\sfparl
&
&
\ssqt\sfparl
\\
\sfblbg
&
\sfvirt
&
\ssqt\sfparl
\\
};
\node  (PH) at (m-2-3) [yshift=1.5em] {};
\path[->,font=\scriptsize]
(m-1-1) edge node[auto] {$\chp$} (m-2-1)
(m-1-3) edge node[auto] {$\xId$} (PH)
(m-2-2) edge node[auto] {$\vsdm$} (m-2-3);
\node[draw,fit=(m-2-2) (m-2-3) ] {};
\node at (m-2-1) [xshift=2.5em] {$\simeq$};
\node at (m-1-1) [xshift=2.5em] {$=$};
\end{tikzpicture}
\quad,
\qquad\qquad\quad
\begin{tikzpicture}[baseline=-0.25em]
\matrix (m) [ssmth, column sep=2.5em]
{
\sfblbb
&
\sfparl
&
\ssqt\sfvirt
\\
\sfparl
&
\sfparl
&
\\
};
\node  (PH) at (m-1-2) [yshift=-1.5em] {};
\path[->,font=\scriptsize]
(m-1-1) edge node[auto] {$\chm$} (m-2-1)
(PH) edge node[auto] {$\xId$} (m-2-2)
(m-1-2) edge node[auto] {$\vsdp$} (m-1-3);
\node[draw,fit=(m-1-2) (m-1-3) ] {};
\node at (m-2-1) [xshift=2.5em] {$=$};
\node at (m-1-1) [xshift=2.5em] {$\simeq$};
\end{tikzpicture}
\]
These morphisms appear in \Rq\ complexes associated with braid group generators.
Two other filtered homomorphisms relating \blbm s appear later in our constructions:
\[
\xpsim\colon \ssqt\,\sfblbb \xrightarrow{\;\ym-\xm\;}\sfblbg,\qquad
\xpsip\colon\ssqt\,\sfblbg\xrightarrow{\;\ym+\xm\;} \sfblbb.
\]
Their action on filtered resolutions is depicted by the diagrams
\[
\begin{tikzpicture}[baseline=-0.25em]
\matrix (m) [ssmth, row sep=4em, column sep=1.5em]
{
\ssqt\sfblbb
&
&
\sfparl
&
\ssqt\sfvirt
\\
\sfblbg
&
\sfvirt
&
\ssqt\sfparl
\\
};
\node  (P1) at (m-2-3) [yshift=1.5em] {};
\node (P2) at (m-1-3) [yshift=-1.5em] {};
\path[->,font=\scriptsize]
(m-1-1) edge node[auto] {$\xpsim$} (m-2-1)
(P2) edge node[auto] {$\xId$} (P1)
(m-2-2) edge node[auto] {$\vsdm$} (m-2-3)
(m-1-3) edge node[auto] {$\vsdp$} (m-1-4);
\node[draw,fit=(m-2-2) (m-2-3) ] {};
\node[draw,fit=(m-1-3) (m-1-4) ] {};
\node at (m-2-1) [xshift=2em] {$\simeq$};
\node at (m-1-1) [xshift=2em] {$\simeq$};
\node at (m-1-3) [xshift=-2.5em] {$\ssqt$};
\end{tikzpicture}
\quad,
\qquad
\begin{tikzpicture}[baseline=-0.25em]
\matrix (m) [ssmth, row sep=4em, column sep=1.5em]
{
\ssqt\sfblbg
&&
\sfvirt
&
\ssqt\sfparl
\\
\sfblbb
&
\sfparl
&
\sfvirt
&
\\
};
\node  (P1) at (m-1-3) [yshift=-1.5em] {};
\node (P2) at (m-2-3) [yshift=1.5em]{};
\path[->,font=\scriptsize]
(m-1-1) edge node[auto] {$\xpsip$} (m-2-1)
(P1) edge node[auto] {$\xId$} (P2)
(m-1-3) edge node[auto] {$\vsdm$} (m-1-4)
(m-2-2) edge node[auto] {$\vsdp$} (m-2-3);
\node[draw,fit=(m-1-3) (m-1-4) ] {};
\node[draw,fit=(m-2-2) (m-2-3) ] {};
\node at (m-2-1) [xshift=2em] {$\simeq$};
\node at (m-1-1) [xshift=2em] {$\simeq$};
\node at (m-1-3) [xshift=-2.5em] {$\ssqt$};
\end{tikzpicture}
\]

\subsection{Filtered \Rq\ complexes}
Now we can add \brwd s to $\Eptn$. We add $\sfbrdp,\sfbrdm\in\Eptt$ and these braids generate by composition and disjoint union the \brwdm\ whose elements, by definition, are finite sequences of elementary positive and negative braids $\sgi$ and $\sgi^{-1}$.

To elementary 2-strand braids we associate the complexes which are the cones of \tflcnm s within the category $\ChDQfbxy$:
\begin{align}
\label{eq:rqco}
\sfbrdp
&\;= \;
\stasaqt
\Bcpbl
\xymatrix{
\sfparl
\xari[r]^-{\chp}
&
\stati\ssqti\sfblbg
}
\Bcpbr
\;\cong\;
\stasaqt
\bgcpbl
\xymatrix{
\sfparl \xari[r]^-{\chp}
&
\stati
\boxed{\ssqti\sfvirt\xrightarrow{\vsdm}\sfparl}
}
\bgcpbr,
\\
\label{eq:rqct}
\sfbrdm
&\; = \;
\stasaqti
\Bcpbl
\xymatrix{
\stat\sfblbb
\xars[r]^-{\chm}
&
\sfparl
}
\Bcpbr
\;\cong\;
\stasaqti
\bgcpbl
\xymatrix{
\stat
\boxed{\sfparl\xrightarrow{\vsdp}\ssqt\sfvirt}
\xars[r]^-{\chm}
&
\sfparl
}
\bgcpbr.
\end{align}
Recall that the boxes around the complexes indicate that the latter are `inner' complexes, that is, they represent the objects of $\DQfbxy$, while the complexes in square brackets are `outer', that is, they are  the complexes within the category of chain complexes $\ChDQfbxy$.
%
%

\subsection{\HMFp}
In our conventions, in order to obtain the \HMFp\ of a link diagram as a graded Euler characteristic of its \tghm\ we introduce a factor
\[
(-1)^{\dga} a^{\dga-\dgt} q^{\dgq}
\]
multiplying the graded dimensions of the homology. As a result, the skein relation is
\[
a^{1/2}q \left\langle \zbrdmv{\lncf}{5}\right\rangle
-
a^{-1/2}q^{-1} \left\langle \zbrdpv{\lncf}{5}\right\rangle
= (q-q^{-1}) \left\langle \zparlv{\lncf}{5}\right\rangle
\]
and the \HMFp\ of the unknot is $(a^{1/2} q - a^{-1/2}q^{-1})/(q-q^{-1})$.

\subsection{Virtual relations}



The virtual relations~\eqref{eq:virblb},~\eqref{eq:virblg} and~\eqref{eq:virbr} are satisfied after the application of the functor $\xctv{\xdmm}$.
\begin{theorem}
There are isomorphisms of modules
\begin{equation}
\label{eq:cvirbl}
\yectv{\zblbvir{\lncf}{-11}} \cong \yectv{\zvirblb{\lncf}{-11}} \cong \sfblbg,\qquad
\yectv{\zblgvir{\lncf}{-11}} \cong \yectv{\zvirblg{\lncf}{-11}} \cong \sfblbb,
\end{equation}
and isomorphisms of complexes
\begin{equation}
\label{eq:cvircr}
\yectv{\zvirbrpvir{\lncf}{-23}} \cong \sfbrdp,\qquad
\yectv{\zvirbrmvir{\lncf}{-23}} \cong \sfbrdm.
\end{equation}
\end{theorem}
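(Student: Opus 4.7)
The plan is to deduce the module isomorphisms \ex{eq:cvirbl} from the defining exact sequences \ex{eq:exsq} together with Remark~\ref{rm:tpsb}, and then to obtain the complex isomorphisms \ex{eq:cvircr} by applying these module identifications term by term to the \Rq\ cones \ex{eq:rqco} and \ex{eq:rqct}.

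For the first isomorphism in \ex{eq:cvirbl}, I would show $\sfblbb\otimes\sfvirt \cong \sfblbg$. By Remark~\ref{rm:tpsb}, right-tensoring with $\sfvirt$ swaps $y_1$ and $y_2$, and since the relations $\yp-\xp$ and $(\ym+\xm)(\ym-\xm)$ defining $\mMblb$ are symmetric in the $y$-variables, the underlying module remains $\mMblb$. Applying the same swap to the filtration sequence of $\sfblbb$ in \ex{eq:exsq}, and using $\sfparl\otimes\sfvirt\cong\sfvirt$ and $\sfvirt\otimes\sfvirt\cong\sfparl$, yields the exact sequence
\[
0\to\shq^2\,\sfparl\to\sfblbb\otimes\sfvirt\to\sfvirt\to 0,
\]
which is precisely the filtration of $\sfblbg$ in \ex{eq:exsq}. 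The resulting extension class $\vsdp\otimes\xId$ is identified with $\vsdm$ via the naturality diagram \ex{eq:cmdgs}; both span the same one-dimensional filtered $\Ext^1(\sfvirt,\sfparl)$, so the extensions agree up to isomorphism. The remaining three cases in \ex{eq:cvirbl} follow by the same argument, reversing the tensoring side or exchanging the colors of the blobs.

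For \ex{eq:cvircr}, I would tensor the \Rq\ cone \ex{eq:rqco} defining $\sfbrdp$ on both sides with $\sfvirt$. Tensor product commutes with cones, so $\sfvirt\otimes\sfbrdp\otimes\sfvirt$ becomes the cone of $\xId\otimes\chp\otimes\xId$ from $\sfvirt\otimes\sfparl\otimes\sfvirt$ to $\stati\ssqti\,\sfvirt\otimes\sfblbg\otimes\sfvirt$. A double use of Remark~\ref{rm:tpsb} gives $\sfvirt\otimes\sfparl\otimes\sfvirt\cong\sfparl$, while two applications of \ex{eq:cvirbl} yield $\sfvirt\otimes\sfblbg\otimes\sfvirt\cong\sfblbb\otimes\sfvirt\cong\sfblbg$. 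Under these identifications the conjugated map $\xId\otimes\chp\otimes\xId$ matches $\chp$ by the functoriality of the canonical filtration morphism with respect to variable relabeling, so the cone reproduces \ex{eq:rqco} itself. The case of $\sfbrdm$ is strictly analogous via \ex{eq:rqct}.

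The main technical hurdle I anticipate is ensuring that the canonical chain maps $\chp$ and $\chm$ are preserved on the nose, and not merely up to nonzero scalar, under conjugation by virtual crossings. This reduces to applying the commutative square \ex{eq:cmdgs} to the filtered diagrams defining $\chp$ and $\chm$, and carefully tracking grading and sign conventions through the tensor products; once this functoriality is in hand, every other step is a routine consequence of Remark~\ref{rm:tpsb}.
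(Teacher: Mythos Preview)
Your proposal is correct and follows essentially the same route as the paper: for \ex{eq:cvirbl} the paper tensors the cone presentation~\eqref{eq:fdgel} with $\sfvirt$ and invokes Remark~\ref{rm:tpsb} together with the commutative diagram~\eqref{eq:cmdgs}, exactly as you do via the equivalent exact-sequence formulation~\eqref{eq:exsq}; for \ex{eq:cvircr} the paper likewise sandwiches the \Rq\ cone by two copies of $\sfvirt$ and observes that $\chp$ is preserved because it is the canonical morphism attached to the filtration. Your anticipated ``technical hurdle'' about $\chp$ being preserved on the nose is disposed of in the paper by precisely this canonicity observation, so you need not chase scalars through~\eqref{eq:cmdgs}.
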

\begin{proof}
Consider the presentation~\eqref{eq:fdgel} of $\sfblbb$ as a cone. In view of Remark~\ref{rm:tpsb} and a commutative diagraim~\eqref{eq:cmdgs}, a tensor product of this presentation with $\sfvirt$ over the intermediate variables turns it into the similar presentation for $\sfblbg$. This proves the left isomorphisms of~\eqref{eq:cvirbl}. The right isomorphisms are proved similarly.

In order to prove the first isomorphism of~\eqref{eq:cvircr}, we sandwich the first presentation of $\sfbrdp$ in~\eqref{eq:rqco} by two bimodules $\sfvirt$. Both bimodules of the cone remain the same and the morphism $\chp$ transforms into itself, because it is a canonical morphism associated with the filtration. The second isomorphism of~\eqref{eq:cvircr} is proved similarly.
\end{proof}

\subsection{Morphism abbreviations within large diagrams}
In order to avoid overloading of large diagrams appearing in sections~\ref{s.srm},~\ref{s:thrm} and~\ref{s:mmvs}, we will omit labels on arrows connecting \bmdl s and their resolutions which correspond to two types of standard morphisms.

First, an unlabelled arrow connecting two \bmdl s (or their resolutions) of virtual braids, for example,
\[\begin{tikzcd} \sftpvir \rar & \sftrilo\end{tikzcd}\]
appearing in the diagram~\eqref{eq:lngsmth},
corresponds to the \tvsd~\eqref{eq:tvsd}.

Second, an unlabelled arrow connecting two \bmdl s (or their resolutions) of braid-graphs composed of black blobs, for example
\begin{equation}
\label{eq:ttvsd}
\begin{tikzcd}
\sftrilob \rar&
\sflpblbb
\end{tikzcd}
\end{equation}
appearing in the diagram~\eqref{eq:bgthrc} corresponds to the homomorphism $\chm$ which maps one of the blobs into two parallel arcs:
\begin{equation}
\label{eq:tpchm}
\begin{tikzcd}
\xxectvv{\zlblbbp{\lntx}{\lnty}{\lnte}}{\bfz}{\bfy}
\otQby
\xxectvv{\zlpblbb{\lntx}{\lnty}{\lnte}}{\bfy}{\bfx}
\rar{\chm\otimes\xId}
&
\xxectvv{\ztparl{\lntx}{\lnty}{\lnte}}{\bfz}{\bfy}
\otQby
\xxectvv{\zlpblbb{\lntx}{\lnty}{\lnte}}{\bfy}{\bfx}.
\end{tikzcd}
\end{equation}

\section{Multi-filtered structure of \Sgl\ \bmdl s and \Rq\ complexes}

\subsection{A category of \mfspl\ chain complexes}
Consider a lattice $\Zbstr$. We denote its vertices as $\vert = (\vert_1,\ldots,\vert_\blbn)$. The vertices are partially ordered: $\vertp\geq \vert$ if $\vertp_i\geq\vert_i$ for all $i$. We define the \ttdg\ of a vertex as $\tdgv{\vert} = \vert_1+\cdots+\vert_{\blbn}$.

For an additive category $\ctA$ we define an associated \nfspl\ category $\FnA$.
Its objects are $\blbn$-graded objects of $\ctA$, that is, an object of $\FnA$ is a \emph{finite} direct sum of objects of $\ctA$: $A = \bigoplus_{\vert\in\Zbstr} A_{\vert}$. The gradings define $\blbn$ filtrations $\xFlmi$, $1\leq i \leq \blbn$ of the object $A$: $\xFlmij A = \bigoplus_{\substack{\vert\in\Zbstr \\ \vert_i\geq j}} A_{\vert}.$ A morphism $A\xrightarrow{f} B$ between two objects of $\FnA$ is a morphism between them as objects of $\ctA$ which respects all filtrations, that is,  it has a nontrivial component $A_{\vert}\xrightarrow{f_{\vert,\vertp}}B_{\vertp}$ only when $\vertp\geq \vert$.

The category $\FnA$ is again additive, and we consider the category $\ChumA$
of bounded chain complexes over it.
An object of this category is a bounded chain complex
$\xcmpAd=\bigl(\cdots\xrightarrow{d_{i-1}} A_i \xrightarrow{d_i} A_{i+1}\xrightarrow{d_{i+1}}\cdots\bigr)$ whose chain objects $A_i$ are finitely $\blbn$-graded objects of $\ctA$ and whose differentials are \nfltd\ morphisms.
For $\vert\in\Zbstr$ the complex $A$ determines an associated graded complex
\[\xcmpAdv = \bigl( \cdots\rightarrow A_{i;\vert} \xrightarrow{d_{i;\vert} } A_{i+1;\vert}\rightarrow\cdots\bigr),\] which is an object in the category $\ChA$ of chain complexes over $\ctA$. We refer to $\xcmpAdv$ as \emph{\cnstc es} of $\xcmpAd$. We often drop the differential from the notation of the complex writing $\cpA$ instead of $\xcmpAd$.

\subsection{An associated homotopy category}
By the standard definition, if three morphisms $f,g,h\in\Hom_{\FnA}(\cpA,\cpB)$ satisfy a relation $f-g = [\cpd,h]$, then $f$ and $g$ are called homotopic. This defines the \mfspl\ homotopy category $\KumA$ over the additive category $\ctA$.

A filtration $\xFlmi$ determines two endofunctors $\xFlmij$ and $\xQlmij$ of the category $\ChFnA$: $\xFlmij\cpA$ is a subcomplex of $\cpA$, while $\xQlmij\cpA$ is the corresponding quotient complex. These two complexes are related to $\cpA$ by \emph{canonical} morphisms
\begin{equation}
\label{eq:mfcnmr}
\xymatrix{
\xFlmij \cpA
\xari[r]
&
\cpA
\xars[r]
&
\xQlmij\cpA
}.
\end{equation}
\begin{theorem}
\label{th:cnhmt}
The functors $\xFlmij$ and $\xQlmij$ as well as the canonical morphisms~\eqref{eq:mfcnmr} descend to the homotopy category $\KumA$.
\end{theorem}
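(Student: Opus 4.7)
The plan is to reduce the statement to two formal observations: that $\xFlmij$ and $\xQlmij$ upgrade to additive endofunctors of $\FnA$, and that every morphism appearing in the definition of $\KumA$ --- including the homotopies themselves --- is a morphism of $\FnA$. Together these immediately yield both that the functors descend to $\KumA$ and that the canonical morphisms do.

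I would first check functoriality. For any morphism $\phi\colon A\rightarrow B$ in $\FnA$ the component $\phi_{\vert,\vertp}\colon A_{\vert}\rightarrow B_{\vertp}$ is nonzero only when $\vertp\geq\vert$, so in particular $\vert_i\geq j$ forces $\vertp_i\geq j$. Hence $\phi$ restricts to a morphism $\xFlmij A\rightarrow\xFlmij B$ and descends to a morphism $\xQlmij A\rightarrow\xQlmij B$; both assignments are manifestly additive and functorial. Since the differentials of any complex in $\ChFnA$ are themselves morphisms of $\FnA$, both $\xFlmij$ and $\xQlmij$ extend componentwise to additive endofunctors of $\ChFnA$, and the canonical inclusion and projection~\eqref{eq:mfcnmr} become natural transformations $\xFlmij\hookrightarrow\xId$ and $\xId\twoheadrightarrow\xQlmij$.

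The second step is to verify that these functors preserve the homotopy relation. If $f,g\colon\cpA\rightarrow\cpB$ are homotopic in $\ChFnA$ via $f-g=[\cpd,h]$, then by the very definition of $\KumA$ the homotopy $h$ is a morphism of $\FnA$, so the first step applies to it as well. Applying the additive functor $\xFlmij$ to the homotopy identity produces
\[
\xFlmij f - \xFlmij g = [\cpd,\, \xFlmij h],
\]
so $\xFlmij f\sim \xFlmij g$ in $\KumA$, and symmetrically for $\xQlmij$. Well-definedness of the canonical morphisms is then automatic from naturality: pre- or post-composing $h$ with the inclusion $\xFlmij\cpB\hookrightarrow\cpB$ (resp.\ the projection $\cpA\twoheadrightarrow\xQlmij\cpA$) supplies the homotopy showing that the two resulting compositions agree in $\KumA$. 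I do not expect any genuine obstacle --- the argument is purely formal and rests only on the partial-order constraint built into morphisms of $\FnA$, which is inherited coordinate-by-coordinate by each individual filtration index $i$.
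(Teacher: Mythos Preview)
Your argument is correct. The paper does not actually give a proof of this theorem: it simply remarks that ``the proof is standard (triangulated structure of homotopy category) and we leave it to the reader.'' Your direct verification --- that morphisms in $\FnA$ automatically restrict to $\xFlmij$ and descend to $\xQlmij$ because the partial-order constraint $\vertp\geq\vert$ forces $\vertp_i\geq\vert_i$, and that homotopies in $\ChFnA$ are by definition morphisms of $\FnA$ and so are preserved by these additive functors --- is exactly the kind of standard check the authors had in mind. The parenthetical reference to triangulated structure is perhaps a hint that one could phrase the argument as ``additive functors preserve mapping cones, hence homotopy equivalences,'' but your componentwise verification is equivalent and arguably more transparent here.
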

The proof is standard (triangulated structure of homotopy category) and we leave it to the reader.

\subsection{Multi-cone structure of \mfspl\ chain complexes}

Since the complex $\cpA$ is bounded, there exists a finite \emph{\tdmn} $\sbsXA\subset\Zbstr$ such that the complexes $\Avr$ are trivial unless $\vert\in\sbsXA$. Note that a \tdmn\ $\sbsXA$ is not determined by the complex $\cpA$ uniquely, because one can add to $\sbsXA$ more vertices $\vert$ whose complexes $\Avr$ are trivial.

Define subsets of $\Zbstr$:
\[
\sbsGi = \{\vert\in\Zbstr\colon\tdgv{\vert}=i\},\qquad
\sbsFi = \{\vert\in\Zbstr\colon\tdgv{\vert}\geq i \} = \bigcup_{j\geq i} \sbsGv{j}.
\]

The total grading of an object of $\FnA$  is defined as the sum of individual $\blbn$ gradings:
$\xGlmtoti \cpA = \bigoplus_{\vert\in\sbsGi} A_{\vert}$. Then there is a forgetful functor
$\fFtot\colon(\FnA)\rightarrow(\FA)$ from $\FnA$ to the filtered \smspl\ category $\FA=\FoA$, which remembers only the total grading and filtration. This functor extends to the categories of complexes:
$\fFtot\colon\ChumA\rightarrow\ChuoA$.
%
%

The total complex $A$ is a \tmcn\ of associated graded complexes $A_{\vert}$: $A$ is the direct sum of complexes $\bigoplus_{\vert\in\Zbstr}A_{\vert}$ deformed by adding new differentials $d_{\vert,\vertp} = \sum_i d_{i;\vert,\vertp}$ for every pair $\vertp>\vert$. In particular, $\cpA$ is a multicone with respect to the \ttdg. Namely, define
\[
\xFlmtoti\cpA = \cpfbvv{ \bigoplus_{\vert\in\sbsFi} A_\vert }{\sum_{\vert,\vertp\in\sbsFi}\ydvvp}
\]
Then there is a recursive cone relation
\begin{equation}
\label{eq:cnrel}
\xFlmtoti\cpA \cong
\xCsv{
\xymatrix@C=3.5cm{
\xGlmtoti\cpA
\ar[r]^-
{\sum_{\vert\in\sbsGi,\;\vertp\in\sbsFv{i+1}}
\ydvvp
}
&
\xFlmtotio\cpA
}
}.
\end{equation}

\begin{theorem}
\label{th:gncntr}
If the complexes $\cpA$ and $\xcpAp$ are \het\ in $\ChFnA$, then
\begin{enumerate}
\item
their associated graded complexes $\cpA_{\vert}$ and $\xcpAp_{\vert}$ are homotopy equivalent in $\ChA$;
\item
length-one differentials $\cpA_{\vert}\xrightarrow{\dudvvpA}\cpA_{\vertp}$ and
$\xcpAp_{\vert}\xrightarrow{\dudvvpB}\xcpAp_{\vertp}$, where $\tdgv{\vertp-\vert}=1$, are homotopic as morphisms in $\ChA$.
\end{enumerate}
\end{theorem}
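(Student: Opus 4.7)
The plan is to exploit the block-triangular structure imposed by the multi-filtration. Every morphism, differential, and homotopy in $\ChFnA$ has components $(-)_{\vert,\vertp}$ that vanish unless $\vertp\geq\vert$, so in any composition the inner summation index is forced into the interval determined by the two outer indices. Diagonal parts will inherit the equivalence data from $\ChFnA$ essentially untouched; length-one components will inherit the chain-map identity only up to a correction, and that correction will turn out to be precisely the data of a chain homotopy between the relevant length-one differentials.

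First I would fix data witnessing the equivalence: filtered chain maps $f\colon\cpA\to\xcpAp$, $g\colon\xcpAp\to\cpA$ and filtered homotopies $h_A$, $h_B$ obeying $gf-\xId=[\cpd^A,h_A]$ and $fg-\xId=[\cpd^B,h_B]$. For every $\vert\in\Zbstr$ set $f_\vert=f_{\vert,\vert}$, $g_\vert=g_{\vert,\vert}$, $h_{A,\vert}=(h_A)_{\vert,\vert}$, and similarly for $B$. Projecting each of the four relations to the $(\vert,\vert)$ block collapses the inner sums to the single index $\vert''=\vert$, because we need simultaneously $\vert''\geq\vert$ from the right-hand factor and $\vert\geq\vert''$ from the left. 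The outcome is that $f_\vert$, $g_\vert$ are honest chain maps between $\cpA_\vert$ and $\xcpAp_\vert$ in $\ChA$, that $g_\vert f_\vert-\xId=[\cpd^A_{\vert,\vert},h_{A,\vert}]$, and symmetrically for $h_B$. Thus $f_\vert$ and $g_\vert$ are mutually inverse homotopy equivalences, giving (1).

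For part (2), I would extract the $(\vert,\vertp)$ component of $\cpd^B f-f\cpd^A=0$ with $\tdgv{\vertp-\vert}=1$. The same support analysis restricts the dummy index to $[\vert,\vertp]$, which by the length-one hypothesis consists only of the two endpoints. The surviving identity can be rearranged into
\[
\dudvvpB\,f_\vert-f_\vertp\,\dudvvpA=\pm[\cpd,\,f_{\vert,\vertp}],
\]
where the right-hand side is the standard commutator of $\cpd$ with the off-diagonal component $f_{\vert,\vertp}\colon\cpA_\vert\to\xcpAp_\vertp$ viewed as a degree-zero morphism in $\ChA$. This says that after transport along the equivalences $f_\vert,f_\vertp$ from part (1), the two length-one differentials $\dudvvpA$ and $\dudvvpB$ become equal up to chain homotopy in $\ChA$, with $f_{\vert,\vertp}$ providing the homotopy, which is the claim.

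The main obstacle is purely organisational: one has to keep track of the support constraints for every composition carefully and verify that the length-one hypothesis is exactly what reduces the interval $[\vert,\vertp]$ to its two endpoints. For $\tdgv{\vertp-\vert}\geq 2$ intermediate vertices $\vert''$ survive in the sums, contributing genuine higher correction terms of $A_\infty$ type, so a clean homotopy statement for longer-step differentials is not to be expected from this argument.
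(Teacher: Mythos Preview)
Your argument is correct and is exactly the ``obvious'' proof the paper has in mind: the paper states Theorem~\ref{th:gncntr} and then writes ``The proof of this theorem is obvious and we leave it to the reader,'' giving no details at all. Your block-triangular extraction of the $(\vert,\vert)$ components for part~(1) and the $(\vert,\vertp)$ components with $\tdgv{\vertp-\vert}=1$ for part~(2) is the intended computation, and your remark that longer-step differentials acquire genuine higher corrections is the right reason the statement stops at length one.
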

The proof of this theorem is obvious and we leave it to the reader.

\begin{theorem}
\label{th:hmeq}
Let $A$ be a complex in $\ChFnA$ with a \tdmn\ $\sbsXA$. For a collection of complexes $\Bvr$, $\vert\in\sbsXA$ such that $\Bvr\sim\Avr$ in $\ChA$ there exists a complex $\cpB\sim\cpA$ in $\ChFnA$, whose \cnstc es are $\Bvr$.
%
\end{theorem}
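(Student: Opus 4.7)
The plan is to perform downward induction on the total degree, using the recursive cone decomposition \eqref{eq:cnrel} to build $\cpB$ one t-degree layer at a time while transporting the differentials of $\cpA$ across the given constituent-wise homotopy equivalences $f_\vert\colon\Avr\to\Bvr$ and $g_\vert\colon\Bvr\to\Avr$.

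First I would handle the base case. Since $\sbsXA$ is finite, there is an $N$ with $\tdgv{\vert}\leq N$ for all $\vert\in\sbsXA$. For $i>N$ the complex $\xFlmtoti\cpA$ vanishes, and I set $\xFlmtoti\cpB=0$. At the top layer $i=N$, the object $\xGlmtoti\cpA=\bigoplus_{\vert\in\sbsGi\cap\sbsXA}\Avr$ carries no cross-differentials, because distinct vertices of equal t-degree are incomparable in the partial order. I therefore define $\xFlmtoti\cpB=\bigoplus_{\vert\in\sbsGi\cap\sbsXA}\Bvr$ and take $F_N=\bigoplus f_\vert$, $G_N=\bigoplus g_\vert$; these are trivially multi-filtered because they are diagonal.

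For the inductive step, suppose $\xFlmtotio\cpB$ has been constructed together with a multi-filtered homotopy equivalence given by chain maps $F_{i+1}\colon\xFlmtotio\cpA\to\xFlmtotio\cpB$ and $G_{i+1}\colon\xFlmtotio\cpB\to\xFlmtotio\cpA$. The map
\[
\alpha_i\;=\;\sum_{\vert\in\sbsGi,\,\vertp\in\sbsFv{i+1}}\ydvvp\colon\xGlmtoti\cpA\longrightarrow\xFlmtotio\cpA
\]
of \eqref{eq:cnrel} is a multi-filtered chain map by construction. Setting $G_i=\bigoplus_{\vert\in\sbsGi\cap\sbsXA} g_\vert$, I would define
\[
\beta_i\;=\;F_{i+1}\circ\alpha_i\circ G_i\colon\xGlmtoti\cpB\longrightarrow\xFlmtotio\cpB,
\]
which is a composition of multi-filtered chain maps. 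Let $\xFlmtoti\cpB$ be the mapping cone of $\beta_i$. A multi-filtered homotopy equivalence $\xFlmtoti\cpA\sim\xFlmtoti\cpB$ is then assembled via the standard naturality of the cone: the relation $\beta_i\circ(\bigoplus_\vert f_\vert)\sim F_{i+1}\circ\alpha_i$, an immediate consequence of $g_\vert f_\vert\sim 1$, is precisely the data needed to produce a homotopy equivalence of cones, with a correction term assembled from $F_{i+1}\circ\alpha_i$ and a constituent-wise homotopy. Iterating down to the minimum t-degree on $\sbsXA$ yields the desired $\cpB$ with the prescribed constituents.

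The main obstacle I anticipate is the bookkeeping required to verify that every auxiliary map and homotopy appearing in the cone construction lives in $\FnA$ rather than merely in $\ctA$. This is handled by the key observation that any filtered morphism between summands supported at vertices of equal t-degree must be diagonal, so the constituent-level data $f_\vert$, $g_\vert$, and their homotopies embed into $\FnA$ without modification; all off-diagonal contributions to the resulting $F_i$ are inherited from $F_{i+1}$ and $\alpha_i$, which are multi-filtered by the inductive hypothesis and by definition respectively.
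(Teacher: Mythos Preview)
Your proposal is correct and follows essentially the same approach as the paper: downward induction on total degree using the recursive cone decomposition~\eqref{eq:cnrel}, with the new edge map defined as $\beta_i = F_{i+1}\circ\alpha_i\circ G_i$, which is exactly the choice $\zdBot=\fAB\,\zdAot\,\fBA$ of the paper's Lemma~\ref{lm:lmcn}. The paper packages the cone step into that lemma and then observes that the explicit formulas~\eqref{eq:heqcn} are compositions of multi-filtered morphisms; your final paragraph makes the same point from a slightly different angle, noting that distinct vertices of equal total degree are incomparable so the constituent-level data embed diagonally into $\FnA$.
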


The proof of this theorem is based on a well-known lemma used to show that homotopy categories are triangulated.
\begin{lemma}
\label{lm:lmcn}
For two pairs of \hec\ complexes $\cpA_i\sim\cpB_i$, $i=1,2$ in $\ChA$ and for a morphism 
$\zdAot\colon\cpA_1\rightarrow\cpA_2$ there exists a morphism $\zdBot\colon\cpB_1\rightarrow\cpB_2$ such that their cones are \het:
\[
\bigl[\xymatrix{\cpA_1\ar[r]^-{\zdAot} & \cpA_2}\bigr] \sim
\bigl[\xymatrix{\cpB_1\ar[r]^-{\zdBot} & \cpB_2}\bigr].
\]
\end{lemma}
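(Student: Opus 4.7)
The plan is to transport $\zdAot$ along the given homotopy equivalences and then invoke the standard fact that the mapping cone in a homotopy category is well-defined up to equivalence under replacement of source and target by homotopy-equivalent complexes.

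First, I would fix data realizing the equivalences: chain maps $f_i\colon\cpA_i\to\cpB_i$ and $g_i\colon\cpB_i\to\cpA_i$ with homotopies $h_i^A$ and $h_i^B$ satisfying $g_if_i-\xId=[\cpd,h_i^A]$ and $f_ig_i-\xId=[\cpd,h_i^B]$ for $i=1,2$. The natural transport of $\zdAot$ is then
\[
\zdBot\;:=\;f_2\circ\zdAot\circ g_1\colon\cpB_1\longrightarrow\cpB_2.
\]

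Second, I would write the cones as $C(\zdAot)=\cpA_1[1]\oplus\cpA_2$ and $C(\zdBot)=\cpB_1[1]\oplus\cpB_2$ with the usual cone differentials, and construct comparison chain maps
\[
F=\begin{pmatrix}f_1 & 0\\ \alpha & f_2\end{pmatrix},\qquad G=\begin{pmatrix}g_1 & 0\\ \beta & g_2\end{pmatrix}.
\]
The only nontrivial chain-map condition on $F$ collapses to $f_2\zdAot-\zdBot f_1=[\cpd,\alpha]$, and expanding $\zdBot f_1=f_2\zdAot g_1f_1=f_2\zdAot(\xId+[\cpd,h_1^A])$ forces the choice $\alpha=-f_2\zdAot h_1^A$. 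The symmetric choice $\beta=-g_2\zdBot h_1^B$ makes $G$ a chain map.

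Third, I would verify $GF\sim\xId_{C(\zdAot)}$ and $FG\sim\xId_{C(\zdBot)}$ by direct matrix computation. The diagonal entries of $GF$ are $g_1f_1$ and $g_2f_2$, already null-homotopic to $\xId$ via $h_1^A$ and $h_2^A$; the off-diagonal entry produced when multiplying two triangular matrices is killed by augmenting the block-diagonal homotopy $\mathrm{diag}(h_1^A,h_2^A)$ with a single $(2,1)$ correction assembled from $h_2^A$, $\zdAot$, $g_2$, $\alpha$, and $h_1^A$; the analysis of $FG$ is symmetric.

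The main obstacle is purely bookkeeping — matching signs against the cone convention (which mixes $-d_{A_1}$ with $+d_{A_2}$) and pinning down the explicit $(2,1)$ correction in the homotopy on the cone. Conceptually the argument carries no surprises, and the authors' remark that this is \emph{``a well-known lemma used to show that homotopy categories are triangulated''} accurately summarizes the situation.
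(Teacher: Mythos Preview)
Your proposal is correct and follows the same route as the paper: set $\zdBot=f_2\,\zdAot\,g_1$, build explicit lower-triangular chain maps between the two cones, and verify they are mutually inverse up to homotopy. The paper simply records complete closed formulas~\eqref{eq:heqcn} for all four pieces of the equivalence data (with a slightly more elaborate off-diagonal correction than your $\alpha$), since those explicit expressions are reused downstream in the proofs of Theorem~\ref{th:hmeq} and of the third Reidemeister move.
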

\begin{proof}
Denote $\cpA = \cpA_1\oplus \cpA_2$, $\xdA = \xdAo + \xdAt$ and similarly for $\cpB$. By the assumption of the lemma there is a \hec
\[
\begin{tikzpicture}
\matrix (m) [smmt, column sep=2.75em]
{(\cpA,\xdA)&(\cpB,\xdB)\\};
\path[->,font=\scriptsize]
(m-1-1) edge [bend left=12] node[auto]{$\fAB$} (m-1-2)
(m-1-1) edge [loop left] node [auto]{$\hmA$} (m-1-1)
(m-1-2) edge [bend left=12] node[auto]{$\fBA$} (m-1-1)
(m-1-2) edge [loop right] node[auto]{$\hmB$} (m-1-2);
\end{tikzpicture}
\]
Choose $\zdBot = \fAB\, \zdAot\, \fBA$, then a straightforward calculation shows that the \hec
\[
\begin{tikzpicture}
\matrix (m) [smmt, column sep=2.75em]
{(\cpA,\xdA+\zdAot)&(\cpB,\xdB+\zdBot)\\};
\path[->,font=\scriptsize]
(m-1-1) edge [bend left=12] node[auto]{$\FAB$} (m-1-2)
(m-1-1) edge [loop left] node [auto]{$\HmA$} (m-1-1)
(m-1-2) edge [bend left=12] node[auto]{$\FBA$} (m-1-1)
(m-1-2) edge [loop right] node[auto]{$\HmB$} (m-1-2);
\end{tikzpicture}
\]
is established by the morphisms
\begin{equation}
\label{eq:heqcn}
\begin{aligned}
\FAB & = \fAB + \fAB \,\zdAot\,\hmA + (\hmB\,\fAB - \fAB\,\hmA)\, \zdAot,
\\
\FBA & = \fBA + \hmA\,\zdAot\, \fBA + \zdAot\,(\fBA\,\hmB - \hmA\,\fBA),
\\
\HmA & = \hmA + \fBA\,(\hmB\,\fAB - \fAB\,\hmA) + \hmA\, \zdAot\,\hmA - \zdAot\,(\hmA)^2,
\\
\HmB & = \hmB.
\end{aligned}
\end{equation}
\end{proof}

\begin{proof}[Proof of Theorem~\ref{th:hmeq}]
Let $\imn = \min\{ \tdgv{\vert},\;\vert\in\sbsXA\}$ and $\imx = \max\{ { \tdgv{\vert},\;\vert\in\sbsXA}\}$. We will prove the theorem for all subcomplexes $\xFlmtoti\cpA$ with \tdmn s $\sbsXAi = \sbsXA\cap\sbsFi$
by induction from $i = \imx + 1$ to $i = \imn$ by going over $i$ `backwards'.

If $i = \imx +1$,
then $\sbsXAi = \varnothing$,
the complex $\xFlmtoti\cpA$ is trivial and the claim of the theorem is obvious.
If $i = \imn$, then $\sbsXAi = \sbsXA$, $\xFlmtoti\cpA = \cpA$ and the veracity of the theorem for $\xFlmtoti\cpA$ implies that it holds for $\cpA$. Now assume that for some value of $i$ the theorem holds for $\xFlmtotio\cpA$. Its extension to $\xFlmtoti\cpA$ is performed with the help of \ex{eq:cnrel} and Lemma~\ref{lm:lmcn} by observing that the differentials, homotopy equivalences and homotopies defined explicitly by the formulas~\eqref{eq:heqcn} are compositions of \mfltd\ morphisms, so they are \mfltd\ themselves and define \hec\ in the category $\ChFnA$.
\end{proof}

From now on till the end of this subsection we make a distinction between the complex $\xcmpAd$, which is an object of $\ChFnA$ and the sum of its chain objects $A$, which is an object of $\FnA$ endowed with an extra (homological) grading. Any endomorphism $\xhen$ of $A$ can be split according to the total degree:
\[
\xhen = \sum_{i = 0}^{\infty} \xheni,\qquad
\xheni =
 \sum_{\substack{\vert,\vertp\in\sbsXA \\
\tdgv{\vertp-\vert}=i}} \xhensv{\vert,\vertp}.
\]
In particular, we split the differential $\cpd$ of the complex $\xcmpAd$: $\cpd = \sum_{i=0}^{\infty} \xbdi$.
%
%
Obviously, $\xbdz = \sum_{\vert\in\sbsXA}\yydvrt$ and $\xcmpAdz = \bigoplus_{\vert\in\sbsXA} \xcmpAdv$.
The condition $\cpd^2=0$ implies relations
\begin{equation}
\label{eq:dsqc}
\xbdi^2 = - \sum_{j=0}^{i-1}[\xbdj, \xbdv{2i-j}],\qquad
\sum_{j=0}^{i}[\xbdj,\xbdv{2i+1-j}]=0.
\end{equation}
These relations at $i=1$ say $[\xbdz,\xbdo]=0$, $\xbdo^2 = -[\xbdz,\xbdt]$, so $\xbdo$
is an element of $\EnChA\xcmpAdz$
with homological degree one and with the property that it is closed and its square is homotopic to zero.

\subsection{\Sbmdl s and \mfspl\ chain complexes}
For the category $\left. \Qbxyfr\right.$ of free \qgrdd\ $\Qbxy$-modules, consider an associated category of \mfspl\ chain complexes $\ChFnQbxyfr$. There is a functor $\fFtotD$
%
%
\begin{equation}
\label{eq:fftot}
\xymatrix{
\ChFnQbxyfr
\ar@/^5pc/[drr]^-{\fFtotD}
\ar[d]
\ar[r]^-{\fFtot}
&
\ChFoQbxyfr
\ar[d]
\ar[dr]^-{\fR}
\\
\KFnQbxyfr
\ar[r]^-{\fFtot}
&
\KFoQbxyfr
\ar[r]
&
\DQfbxy
}
\end{equation}
which factors through the homotopy categories.
The functor $\fR$  is surjective and if $\fR(A) = B$, then $A$ is a free resolution of the object $B$ in the derived category of filtered modules.

Recall that a $\Qbxy$-module $\cgnst$ of \ex{eq:spbmdq} corresponding to a permutation $\gnst\in\Smgn$, has a canonical Koszul resolution $\Prcnxv{\xctv{\gnst}}$ of \ex{eq:cnrsvK}, which is an object in $\ctCh\bigl(\Qbxyfr\bigr)$. To a \tbrgr\ $\brgr=\brgr_{\Ngr}\cdots\brgr_1$
presented as a product of $\Ngr$  elementary \tbrgr s of the type~\eqref{eq:brdgpm}
we will associate a canonical complex $\ccnbr$ from $\ChFnQbxyfr$ such that $\fFtotD\ccnbr \cong \cbrgr$, that is, $\fFtot\ccnbr$ is a free filtered resolution of $\cbrgr$.

As a first step towards $\ccnbr$ we define a different complex $\cfrbr$ from $\ChFnQbxyfr$ with the same property
$\fFtot\cfrbr\cong\cbrgr$.
%
%
If $\brgr$ is an elementary \tbrgr~\eqref{eq:brdgpm}, then $\blbn=1$ and we define $\cfrbr=\ccnbr$,
where $\ccnbr$ is defined by \ex{eq:cnrsml}
and the single filtration of $ \ctChuv{1}$ is the filtration of $\ccnbr$. For a general $\brgr=\brgr_{\Ngr}\cdots\brgr_1$ we define $\cfrbr$ as the product of complexes $\ccnbri$ over the intermediate variables, the $i$-th filtration of $\Chum$ being the filtration of the $i$-th factor $\ccnbri$.

Here is a simple example of $\cfrbr$ for $\Ngr=2$:
\def\lngtp#1#2
{
\Prcnxv{\xctvbzy{#1}}
\otQby
\Prcnxv{\xctvbyx{#2}}
}
\begin{equation}
\label{eq:exlngr}
\yectfrbzx{\zblgblb{\lncf}{-11}} =
\boxed{
\vcenter{
\xymatrix@C=1.5cm@R=1.5cm{
\lngtp{\zparlv{\lncf}{5}}{\zvirtv{\lncf}{5} }
\ar[r]^-{\vsdp\otimes\xId}
\ar[d]^-{\xId\otimes\vsdm}
&
\ssqt\;\lngtp{\zvirtv{\lncf}{5}}{\zvirtv{\lncf}{5} }
\ar[d]^-{\xId\otimes\vsdm}
\\
\ssqt\;\lngtp{\zparlv{\lncf}{5}}{\zparlv{\lncf}{5} }
\ar[r]^-{-\vsdp\otimes\xId}
&
\ssqt^2\;\lngtp{\zvirtv{\lncf}{5}}{\zparlv{\lncf}{5} }
}
}
}
\end{equation}

The constituent complexes $\cfrbrvr$ of $\cfrbr$ sit at the vertices of the unit $\blbn$-dimensional cube $\xCbm\subset\Zbstr$ and they are tensor products of canonical resolutions appearing in the cones~\eqref{eq:cnrsml} shifted by $\ssqt^{\tdv}$. The coordinates of a vertex $\vert$ of the cube determine which of the complexes $\ctcnIdptn$ or $\ctcngnsti$ appear at each position in this tensor product.
Let $\xdot = \xdotz+\xdoto$ be the differential of the complex $\cfrbr$ expanded according to the total degree. $\xdotz$ is the sum of differentials of the \cnstc es $\cfrbrvr$, while $\xdoto=\sum_{\edg}\xdote$ is the sum of components corresponding to the edges of the cube $\xCbm$ having the form $\xdote = \pm\xId^{\otimes i}\otimes \vsd\otimes \xId^{\otimes (\blbn - i -1)}$, where $\vsd$ is a \tvsd\ morphism $\vsdp$ or $\vsdm$.

Let $\gnstvert$ be the product of permutations $\Idptn$ or $\gnsti$ as they appear in the tensor product of complexes $\ctcnIdptn$ and $\ctcngnsti$ at $\vert$. Then there is a \hec\ $ \cfrbrvr\sim \svbrcn$ since, in view of \ex{eq:tpcmp}, both complexes are resolutions of the bimodule $\cgnstvert$. Hence, according to Theorem~\ref{th:hmeq}, the complex $\cfrbr$ is \het\ within the category $\ChFnQbxyfr$ to a complex $\ccnbr$, whose \cnstc es are $\ssqt^{\tdv}\svbrcn$ and whose differentials of positive total degree are determined by the choice of contraction homotopy. In particular, the complex~\eqref{eq:exlngr} transforms into the complex~\eqref{eq:sqdiag}.

Let $\cpd = \sum_{i=0}^{\blbn} \xbdi$ be the differential of the complex $\ccnbr$ expanded according to the total degree. The component $\xbdz$ is the sum of the differentials of the canonical complexes $\svbrcn$, hence it is predetermined. The component $\xbdo$ is the sum of components corresponding to the edges of the cube $\xCbm$: $\xbdo = \sum_{\edg} \xbde$.
If two vertices $\vert,\vertp\in\xCbm$ are connected by an edge $\edg$, then there exists a transposition $\gnstedg$ such that $\gnstvertp = \gnstvert\gnstedg$ and
in view of the commutative diagram~\eqref{eq:cmdgs} and Theorem~\ref{th:gncntr},
after the contraction of constituent complexes the differential of the edge $\edg$ becomes (up to a sign) homotopic to the \tvsd\ morphism $\vsde$  between
the resolutions: $\xbde \sim \pm \vsde$.
The contraction might also generate longer differentials $\xbdi$, $i\geq 2$ in $\ccnbr$. We show that the total degree one differentials of $\ccnbr$ are (up to a sign) precisely the \tvsd\ morphisms $\vsde$: $\xbde = \pm\vsde$ and differentials of higher total degree are absent.
\begin{theorem}
\label{th:cncp}
The differential $\cpd$ of the complex $\gnC$ has the following canonical form:
$\xbde=\pm \vsde$ and $\xbdi=0$ for $i\geq 2$.%
\end{theorem}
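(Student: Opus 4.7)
The plan is to prove the theorem by induction on the total degree $i$, first normalizing the edge differentials $\xbde$ in degree one and then successively killing each $\xbdi$ for $i \geq 2$. At each step the adjustment proceeds by modifying the contraction homotopy provided by Theorem~\ref{th:hmeq}, with the obstruction encoded by a morphism space between canonical Koszul resolutions of permutation bimodules; a degree-based vanishing of these morphism spaces allows one to conclude.

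For the base case, Theorem~\ref{th:gncntr} combined with the commutative diagram~\eqref{eq:cmdgs} yields $\xbde \sim \pm\vsde$ as chain morphisms between the canonical resolutions $\svbrcn$ attached to the two endpoints of the edge $\edg$, in the appropriate $(\dga, \dgq, \shs)$-bidegree. Hence there is a chain homotopy $h$ realizing $\xbde - (\pm\vsde) = [\xbdz, h]$. I would first check that $h$ can be chosen as a multi-filtered morphism in $\FnQbxyfr$, and then use it to modify the contraction data so that $\xbde$ becomes $\pm\vsde$ exactly. The signs are those of the standard cube-of-resolutions convention, fixed so that the two compositions around every square face of the cube $\xCbm$ agree up to sign.

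Once $\xbde = \pm\vsde$ is enforced on every edge, the anticommutativity of virtual-slide compositions across each square face, readable directly from~\eqref{eq:cmdgs} and~\eqref{eq:mcns}, gives $\xbdo^2 = 0$ on the nose. Combined with the relations~\eqref{eq:dsqc}, an inductive argument shows that, assuming $\xbdj = 0$ for $2 \leq j < i$, the total-degree-$i$ component of $\cpd^2 = 0$ collapses to $[\xbdz, \xbdi] = 0$. Hence $\xbdi$ is a closed morphism between the canonical resolutions at vertices $\vert$ and $\vertp$ satisfying $\tdgv{\vertp - \vert} = i$.

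The technical core of the argument, and the main obstacle, is to show that any such $\xbdi$ is null-homotopic through a multi-filtered nullhomotopy, after which modifying the contraction data as in the base case eliminates it. This reduces to computing the relevant $\Ext^1$-group between the permutation bimodules at $\vert$ and $\vertp$ in the specific trigraded degree carried by $\xbdi$, and verifying its vanishing. Since these bimodules factor over $\IQ$ into tensor products of one-variable pieces and the permutations $\gnstvert$ and $\gnstvertp$ differ in at least $i \geq 2$ independent transpositions of strands, the Ext computation factorizes; in each elementary two-strand factor only a single nontrivial extension class, represented by $\vsdp$ or $\vsdm$, appears, and it carries $\shs$-shift exactly one. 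Consequently the trigrading demanded by a higher differential with $\tdgv{\vertp - \vert} \geq 2$ cannot be realized, forcing $\xbdi$ to be null-homotopic and closing the induction.
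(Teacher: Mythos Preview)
Your strategy of normalizing the edge differentials and then inductively killing the higher components is more elaborate than needed, and the $\Ext$ argument you sketch at the end contains a concrete error.

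The paper's proof is a single degree count at the chain level, not an $\Ext$ computation. Since the canonical Koszul resolution $\Prcnxv{\xctv{\gnst}}$ places its $i$-th term at shift $\saqt^i$, any homogeneous $\Qbxy$-homomorphism between two such resolutions satisfies $\dgq h \geq 2\,\dga h$. Incorporating the additional $\ssqt^{\tdv}$ shifts carried by the constituent complexes of $\gnC$ upgrades this to $\dgq h \geq 2(\dgtot h + \dga h)$ for every homogeneous endomorphism $h$ of $\gnC$. Now the homotopy realizing $\xbde \sim \pm\vsde$ would need $\dgq = 0$, $\dga = 0$, $\dgtot = 1$, violating the bound; hence it is zero and $\xbde = \pm\vsde$ already, with no modification of contraction data required. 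Likewise any $\xbdi$ with $i\geq 2$ would need $\dgq = 0$, $\dga = -1$, $\dgtot = i$, again violating the bound; hence $\xbdi = 0$ outright. There is no induction and nothing to kill.

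Your factorization argument does not go through because the claim that $\gnstvert$ and $\gnstvertp$ ``differ in at least $i$ independent transpositions'' is false. For the braid-graph $\fblbi\,\fblbi$ the cube vertices $\vert=(0,0)$ and $\vertp=(1,1)$ both carry the identity permutation, yet $\tdgv{\vertp-\vert}=2$; more generally, repeated blobs at the same strand position make the assigned permutations coincide at many pairs of cube vertices with arbitrary total-degree separation. The vanishing you need is not controlled by how the two permutations differ; it is the uniform $q$-degree inequality above, which holds regardless of whether $\gnstvert = \gnstvertp$.
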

For example, according to this theorem,
\begin{equation}
\label{eq:sqdiag}
\yectcnv{\zblgblb{\lncf}{-11}} =
\boxed{
\vcenter{
\xymatrix{
\Prcnxv{\sfvirt}
\ar[r]^-{\vsdm}
\ar[d]_-{\vsdp}
&
\ssqt\,\Prcnxv{\sfparl}
\ar[d]^-{\vsdp}
\\
\ssqt\,
\Prcnxv{\sfparl}
\ar[r]_-{-\vsdm}
&
\ssqt^2\,
\Prcnxv{\sfvirt}
}
}
}
\end{equation}
The proof of Theorem~\ref{th:cncp} is based on the following lemma.
\begin{lemma}
If $\yhm$ is a $\Qbxy$-endomorphism of the complex
$\gnC$
and $\yhm$ is homogeneous with respect to \tadgr, \tqdgr\ and total degree, then
\begin{equation}
\label{eq:qbnd}
\dgq \yhm \geq 2 (\dgtot \yhm + \dga \yhm ).
\end{equation}
\end{lemma}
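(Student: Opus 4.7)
The plan is a direct degree count. The key observation is that every chain object of $\gnC$ is a rank-one free $\Qbxy$-module with an explicit collection of grading shifts, so any $\Qbxy$-linear endomorphism restricts on each chain object to multiplication by a single polynomial in $\Qbxy$. The inequality then just encodes the tautology that elements of $\Qbxy$ have non-negative $q$-degree.

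First I would write the chain objects of $\gnC$ down explicitly. Each constituent complex $\svbrcn$ at a vertex $\vert \in \xCbm$ is a tensor product of two-term Koszul complexes of the form $\saqt\Qxy \xrightarrow{y-x} \Qxy$, so its chain objects are indexed by Koszul subsets $I$ and take the form $\sha^{|I|}\shq^{2|I|}\Qbxy$. The constituent at $\vert$ carries a further outer shift $\ssqt^{\tdv} = \shs^{\tdv}\shq^{2\tdv}$; in the multi-graded category $\ChFnQbxyfr$ the $\shs^{\tdv}$ part is encoded by placing the complex at the $n$-multi-index $\vert$ (whose total grading is $\tdv$), while the $\shq^{2\tdv}$ part remains as a genuine $q$-degree shift. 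The chain object at the bi-index $(\vert, I)$ is therefore
\[
M_{\vert, I} \;=\; \sha^{|I|}\shq^{2\tdv + 2|I|}\Qbxy,
\]
whose generator sits at $(a,q)$-bidegree $\bigl(|I|,\,2\tdv + 2|I|\bigr)$ and at $n$-multi-grading $\vert$.

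Next I decompose $\yhm$ into $\Qbxy$-linear components $M_{\vert, I}\to M_{\vertp, I'}$. Because source and target are rank-one free, each such component is multiplication by some polynomial $p\in\Qbxy$. Reading off the change in the three gradings of the generators, if the component goes from $(\vert,I)$ to $(\vertp,I')$ then
\begin{align*}
\dga\yhm &= |I'| - |I|,\qquad \dgtot\yhm = \tdgv{\vertp} - \tdgv{\vert},\\
\dgq\yhm &= 2\bigl(\tdgv{\vertp} - \tdgv{\vert}\bigr) + 2(|I'| - |I|) + \dgq p = 2\bigl(\dgtot\yhm + \dga\yhm\bigr) + \dgq p.
\end{align*}
Since $p$ lies in the polynomial ring $\Qbxy$ whose generators have $q$-degree $2$, we have $\dgq p \geq 0$, yielding the desired bound. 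Triple-homogeneity of $\yhm$ ensures that every non-zero component satisfies this identity with the same triple $(\dga\yhm,\dgq\yhm,\dgtot\yhm)$, so the inequality passes from components to $\yhm$ itself.

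The main obstacle is purely bookkeeping of shift conventions: correctly combining the inner Koszul shift $\saqt = \sha\shq^2$ with the outer shifts $\ssqt^{\tdv} = \shs^{\tdv}\shq^{2\tdv}$, and recognising which factor contributes to $\dgtot$ (equivalently, to $\dgs$ in the collapsed filtered model) and which to $\dgq$. Once this is settled and the chain objects are realized as shifted rank-one free modules as above, the inequality reduces to the statement that $\Qbxy$ has no elements of negative $q$-degree.
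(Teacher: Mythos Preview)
Your argument is correct and follows the same route as the paper: both proofs record that every summand of $\gnC$ is a shifted free $\Qbxy$-module of the form $\sha^{|I|}\shq^{2\tdv+2|I|}\Qbxy$, so any homogeneous component is multiplication by a polynomial and the inequality reduces to $\dgq p\geq 0$. The paper organizes the bookkeeping in two steps (first $\dgq h\geq 2\dga h$ for maps between unshifted Koszul resolutions, then the $\ssqt^{\tdv}$ shift contributes the extra $2\,\dgtot h$), whereas you do both at once; one small imprecision is your opening claim that ``every chain object of $\gnC$ is rank-one free''---it is a direct sum of such---but your subsequent decomposition by $(\vert,I)$ handles this correctly.
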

\begin{proof}
It is easy to see that the \Kszl\ resolution $\Prcnxv{\xctv{\gnst}}$ of \ex{eq:cnrsvK} has the form
\[
\Prcnxv{\xctvbyx{\gnst}} = \bigl(\saqt^{\blbn}\frFv{\blbn}\rightarrow\cdots\rightarrow
\saqt^i\frFi \rightarrow\cdot\rightarrow \frFv{0} \bigr),
\qquad\text{where}\quad
\frFi = \underbrace{\Qbxy\oplus\cdots\oplus\Qbxy}_{{\blbn \choose i}\;\text{times}}.
\]
Therefore, for any homogeneous homomorphism $\yhm\in\Hom_{\Qbxy} (\Prcnxv{\xctvbyx{\gnst}},
\Prcnxv{\xctvbyx{\gnstp}})$, there is a bound $\dgq h \geq 2\dga h$.
Hence for any homogeneous homomorphism
\begin{equation}
\label{eq:hmvvp}
\yhm\in\Hom_{\Qbxy}\bigl(\ssqt^{\tdv}\Prcnxv{\cgnstvert},
\ssqt^{\tdvp}\Prcnxv{\cgnstvertp}\bigr)
\end{equation}
between two \cnstc es of $\gnC$ there is a bound
\begin{equation}
\label{eq:smbnd}
\dgq h \geq 2(\tdgv{\vertp-\vert} + \dga h),
\end{equation}
 and it implies the bound~\eqref{eq:qbnd}.
\end{proof}
\begin{proof}[Proof of Theorem~\ref{th:cncp}]
Since $\xbde\sim\pm \vsde$, there should exist a homomorphism~\eqref{eq:hmvvp}, $\vert$ and $\vertp$ being the endpoints of $\edg$, such that
\[
\xbde = \pm\vsde + \yydvrtp\, \yhm + \yhm\,\yydvrt,\qquad
\dgq \yhm = 0,\quad\dga\yhm = 0,
\]
Since $\tdgv{\vertp-\vert} = 1$, the degree requirements on $\yhm$ are incompatible with the bound~\eqref{eq:smbnd}, hence $\yhm=0$ and $\xbde = \pm \vsde.$

A component $\xbdi$ of the differential $\cpd$ of the complex $\gnC$ is a homogeneous endomorphism of $\gnC$ with degrees
\[
\dgq \xbdi = 0,\quad\dga\xbdi = -1,\quad\dgtot \xbdi = i,
\]
and if $i\geq 2$, then they are incompatible with the bound~\eqref{eq:qbnd}, which means that $\xbdi = 0$.
\end{proof}

\subsection{Multi-filtered \Rq\ complexes}


For an additive category $\ctA$ and for a positive integer $\blbn$ we define a category of \mcpl es $\ChbumA$. Its objects are pairs $\xcmqAd$, where $\cpA$  is an object of the $\blbn$-graded category over $\ctA$: $\cpA = \bigoplus_{\vert\in\sbsXA} A_{\vert}$, where $\sbsXA\subset\Zbstr$ is a finite subset, $\dgtt=\deg_1+\cdots +\deg_{\blbn}$ being the homological degree, while $\cqd\in\End_{\ctA}(\cpA)$, $\cqd = \cqdo + \cdots \cqdm$, $\deg_i \cqdj = \delta_{ij}$, where $\delta_{ij}$ is the Kronecker symbol and $[\cqdi,\cqdj]=0$. For our present purposes we don't have to define morphisms in $\ChbumA$. There is an obvious forgetful functor $\fFtt\colon\ChbumA \rightarrow \ChA$. A combination of this functor and the forgetful functor $\fFtotD$ of \ex{eq:fftot} yields a forgetful functor
\[
\xymatrix@C=2cm{
\ChbmFnQbxyfr
\ar[d]^-{\fFtt}
\ar[dr]^-{\fFttD}
\\
\ChboFnQbxyfr
\ar[r]^-{\fFtotD}
&
\ChDQfbxy
}
\]
which factors through the category $\ChboFnQbxyfr$.

For a \brwd\ $\brwb$ we define a canonical object $\ccnbw$ of the category $\ChbmFnQbxyfr$ such that $\fFttD\ccnbw\cong \cbrwb$. First, we define the object $\cfrbw$ in the same category by taking the tensor product of \Rqc es~\eqref{eq:rqco} and~\eqref{eq:rqct} over the algebras of intermediate variables selected in accordance with the content of $\brwb$. The degree of each complex is considered as independent, so the result (up to an overall shift in \tmtdg) is the complex supported in the unit cube $\xCbmp\subset\Zbstr$.

The coordinates $0$ and $1$ in an $i$-th coordinate direction correspond to a replacement of the $i$-th elementary factor in the \brwd\ $\brwb$ by either the identity braid $\Idptn$ or by a \blgn\ $\fblbj$ or $\fblgj$, where $j$ is the index of the left strand participating in the $i$-th elementary crossing.
To each vertex $\wert\in\xCbmp$ corresponds a \tbrgr\ $\brgrw$, which is the composition of identity braids and \blgn s selected according to its coordinates. A \cnst\ object at $\wert$ is the complex $\cfrbraw$.
At an edge $\etdg$ connecting the vertices $\brgrw$ and $\brgrwp$ sits a component of the differential $\cnqdte\colon\cfrbraw \rightarrow\cfrbrawp$, which is a canonical morphism corresponding to the $i$-th filtration.

In order to obtain the canonical complexes $\ccnbw$ we replace the \cnst\ objects $\cfrbraw$ with \het\ objects $\ccnbrw$. According to Theorem~\ref{th:cnhmt}, the definition of a canonical morphism extends to the homotopy category, so the differential $\cqde$ at the edge $\etdg$ is still
the canonical morphism associated with $i$-th filtration.

The following example illustrates the structure of the canonical complex $\ccnbw$:
\begin{eqnarray}
\sfcnbrpbrm  & = &
\left[
\begin{tikzcd}[column sep=2.25em]
\sfcnparblb
\rar
\dar
&
\sfcnparpar
\dar
\\
\sfcnblgblb
\rar
&
\sfcnblgpar
\end{tikzcd}
\right]
=
\left[
\hspace{-0.4cm}
\vcenter{
\begin{tikzpicture}
\node (P11) at (3.75,-3) 
{
\begin{tikzpicture}[baseline=(current bounding box.base)]
\matrix (m) [smmt]
{ \sfnvirt \\
\sfnparl \\};
\path[->,font=\scriptsize]
(m-1-1.250) edge node[auto] {} (m-2-1.110);
\node[draw,fit=(m-1-1) (m-2-1) ] {};
\end{tikzpicture}
};
\node (P01) at (0,-3) 
{
\begin{tikzpicture}[baseline=(current bounding box.base)]
\matrix (m) [smmt]
{\sfnvirt & \sfnparl \\
\sfnparl & \sfnvirt \\};
\path[->,font=\scriptsize]
(m-1-1.250) edge node[auto] {} (m-2-1.110)
(m-1-1) edge node[auto] {} (m-1-2)
(m-1-2.250) edge node[auto] {} (m-2-2.110)
(m-2-1) edge node[auto] {} (m-2-2);
\node[draw,fit=(m-1-1) (m-2-2),use as bounding box ] {};
\end{tikzpicture}
};
\node (P10) at (3.75,0) 
{
\begin{tikzcd}
\sfnparl
\end{tikzcd}
};
\node (P00) at (0,0) 
{
\begin{tikzpicture}[commutative diagrams/every diagram,baseline=(current bounding box.base)]
\matrix (m) [smmt]
{ \sfnparl
& \sfnvirt \\};
\path[->,font=\scriptsize]
(m-1-1) edge node[auto] {} (m-1-2);
\node[draw,fit=(m-1-1) (m-1-2) ] {};
\end{tikzpicture}
};
\path[commutative diagrams/.cd, every arrow, every label]
(P00) edge node {} (P10)
(P00) edge node {} (P01)
(P10) edge node {} (P11)
(P01) edge node {} (P11);
\end{tikzpicture}
}
\hspace{-3.7in}
\right]
\end{eqnarray}
We omitted all shift functors in order to emphasize the structure of \cnst\ complexes $\ccnbrw$.

\section{Second Reidemeister move}
\label{s.srm}
\begin{theorem}
\label{th:R2}
The following complexes are \het\ in $\ChDQfbxy$:
\begin{equation}
\label{eq:hcrtw}
\sfbrpbrm\sim\sfparl,\qquad
\sfbrmbrp\sim\sfparl.
\end{equation}
\end{theorem}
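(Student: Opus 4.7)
I will prove the first equivalence $\sfbrpbrm\sim\sfparl$; the second is obtained by the same argument with $\chp,\chm$ and $\sfblbb,\sfblbg$ interchanged. The plan is filtered Gaussian elimination on the tensor-product bicomplex.

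First, using \eqref{eq:rqco} and \eqref{eq:rqct}, I would expand $\sfbrpbrm=\sfbrdp\otQby\sfbrdm$ as a $2\times 2$ outer bicomplex whose corners are the tensor products $\sfparl\otQby\sfblbb$, $\sfparl\otQby\sfparl$, $\sfblbg\otQby\sfblbb$ and $\sfblbg\otQby\sfparl$, with outer differentials induced by $\chp\otimes\xId$ and $\xId\otimes\chm$. Three of these corners simplify at once via $\sfparl\otQby X\cong X$ and $X\otQby\sfparl\cong X$, yielding $\sfblbb$, $\sfparl$ and $\sfblbg$ respectively.

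Next, I would resolve each remaining $\sfblbb$ and $\sfblbg$ through its fdg \eqref{eq:fdgel}, so that the inner constituents of every corner are (shifts of) $\sfparl$ and $\sfvirt$ only, connected by $\vsdp$, $\vsdm$ and identity maps. For the fourth corner $\sfblbg\otQby\sfblbb$ the same expansion, combined with the tensor-product rules of Remark~\ref{rm:tpsb} (which give $\sfvirt\otQby\sfvirt\cong\sfparl$ and $\sfvirt\otQby\sfparl\cong\sfvirt\cong\sfparl\otQby\sfvirt$), yields a short complex of $\sfparl$ and $\sfvirt$ bimodules. Within the combined bicomplex I would then identify a pair of isomorphic $\sfparl$-summands joined by an identity differential, coming from the composite of $\chp\otimes\xId$ and $\xId\otimes\chm$ acting as the identity on the $\sfparl$-quotient of $\sfblbb$ and the $\sfparl$-subbimodule of $\sfblbg$. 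Filtered Gaussian elimination on this pair, followed by the cancellation of the two matched $\sfvirt$-summands produced by $\vsdp\otimes\xId$ and $\xId\otimes\vsdm$ in $\sfblbg\otQby\sfblbb$, leaves precisely one copy of $\sfparl$ at outer degree zero with the trivial filtration, which is $\sfparl$ up to the required overall shifts $\stasaqt\stasaqti=\xId$.

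The main obstacle will not be the algebraic simplification, which is the classical Soergel--Rouquier R2 calculation, but the bookkeeping required to certify that the entire procedure lives in $\ChDQfbxy$ rather than in the underlying unfiltered homotopy category. Each Gaussian elimination must be carried out through a morphism of filtration degree zero, which amounts to checking that the two summands being cancelled sit at matching total $\shs$-shifts, so that the cancelling isomorphism and its inverse are both filtered. I would organise this check by working with the canonical Koszul resolutions of the previous subsection and invoking Theorem~\ref{th:cncp}, which rules out differentials of total degree $\geq 2$ in $\gnC$ and thereby confines the verification to the length-one edge morphisms already displayed in \eqref{eq:fdgel} and \eqref{eq:rqco}--\eqref{eq:rqct}.
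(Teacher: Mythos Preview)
Your outline is on the right track and is morally the same argument as the paper's: expand the tensor bicomplex and cancel in a filtered way. The paper, however, organises the computation around a single filtered \emph{splitting} lemma that absorbs all of the bookkeeping you anticipate. Namely, it first proves (Lemma~\ref{lm:splto}) that the double-blob bimodule splits \emph{as a filtered bimodule},
\[
\sfblgblb \;\cong\; \sfblbg \,\oplus\, \ssqt\,\sfblbb,
\]
and that under this isomorphism the two outer differentials $\chp\otimes\xId$ and $\xId\otimes\chm$ become the matrices $\bigl(\begin{smallmatrix}0\\ \xId\end{smallmatrix}\bigr)$ and $(\xId\;\;\xpsim)$. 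The $2\times 2$ bicomplex then visibly contains a filtered-contractible subcomplex $\shs^{-1}\sfblbg\xrightarrow{\xId}\shs^{-1}\sfblbg$ and, after its removal, a filtered-contractible quotient $\sfblbb\xrightarrow{\xId}\sfblbb$; the single survivor is $\sfparl$. So the two cancellations are of whole Soergel bimodules $\sfblbg$ and $\sfblbb$, not of individual $\sfparl$/$\sfvirt$ constituents.

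Your more granular expansion down to $\sfparl$ and $\sfvirt$ pieces is precisely what the \emph{proof} of Lemma~\ref{lm:splto} does (the change of basis in diagram~\eqref{eq:dcmpo}), so you are re-deriving that lemma inline rather than invoking it. That is fine, but two points of your write-up should be fixed. First, ``the composite of $\chp\otimes\xId$ and $\xId\otimes\chm$'' is not well formed: these are the vertical and horizontal outer differentials of the square and do not compose; the identity components you need come from each of them \emph{separately}, landing respectively in the $\ssqt\sfblbb$ and $\sfblbg$ summands of $\sfblgblb$. Second, the appeal to Theorem~\ref{th:cncp} is unnecessary here: that result rules out long differentials in large braid-graph resolutions, whereas in the present $2\times 2$ situation every complex is already explicit and the filtered compatibility of the cancellations is read off from matching $\shs$-shifts, exactly as the splitting lemma records.
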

The proof is based on the lemma:
\begin{lemma}
\label{lm:splto}
The `double \blb' bimodules split;
\[
\sfblgblb \cong\sfblbg \oplus\ssqt\, \sfblbb,\qquad
\sfblbblg \cong \sfblbg \oplus\ssqt\, \sfblbb,
\]
and the following diagrams are commutative:
\begin{equation}
\label{eq:twcmdgs}
\xymatrix{
\sfparblb
\xari[r]^-{\chp\otimes\xId}
\ar[d]^-{\cong}
&
\sfblgblb
\xars[r]^-{\xId\otimes\chm}
\ar[d]^-{\cong}
&
\sfblgpar
\ar[d]^-{\cong}
\\
\sfblbb
\ar[r]^-{{\xsmmt{0 \\ \xId}}}
&
\sfblbg\oplus\ssqt\sfblbb
\ar[r]^-{{\xsmmt{\xId & \xpsim}}}
&
\sfblbg
}
\qquad
\xymatrix{
\sfblbpar
\xari[r]^-{\xId\otimes\chp}
\ar[d]^-{\cong}
&
\sfblbblg
\xars[r]^-{\chm\otimes\xId}
\ar[d]^-{\cong}
&
\sfparblg
\ar[d]^-{\cong}
\\
\sfblbb
\ar[r]^-{{\xsmmt{0 \\ \xId}}}
&
\sfblbg\oplus\ssqt\sfblbb
\ar[r]^-{{\xsmmt{\xId & \xpsim}}}
&
\sfblbg
}
\end{equation}
\end{lemma}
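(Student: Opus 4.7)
The plan is to establish the filtered splitting via explicit generators and to verify the commutative diagrams by direct computation using Soergel-type identities in $\mMblb$.

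First, I would identify $\sfblgblb$ explicitly. Since both factors of $\sfblgblb$ have underlying module $\mMblb$ and the Soergel relations $y_1+y_2 = x_1+x_2$ and $y_1y_2=x_1x_2$ propagate through the tensor product, the intermediate variables $y_1, y_2$ are forced to be roots of the same quadratic polynomial as the outer variables $\bfx$ and $\bfz$, yielding $(y_2-y_1)^2 = \xm^2 = (z_2-z_1)^2$ inside $\sfblgblb$. Consequently $\sfblgblb$ is a free module of rank two over $\mMblb$ (in the outer variables) with basis $\{1, y_2 - y_1\}$, and one obtains the identities $((y_2-y_1) \pm \xm)((y_2-y_1) \mp \xm) = 0$ together with the crucial relation $((y_2-y_1)+(z_2-z_1))((y_2-y_1)-\xm) = ((z_2-z_1)-\xm)((y_2-y_1)-\xm)$.

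Second, I would construct the splitting $\sfblbg \oplus \ssqt \sfblbb \xrightarrow{\cong} \sfblgblb$ on generators. The left-hand square of~\eqref{eq:twcmdgs} forces the image of the second-summand generator $1 \in \ssqt \sfblbb$ to be $(\chp\otimes\xId)(1_{\sfparblb}) = (z_2-z_1)+(y_2-y_1) \in \sfblgblb$. The first-summand generator $1 \in \sfblbg$ is mapped to a complementary element chosen so that the resulting $2\times 2$ matrix of $\mMblb$-module maps (in the basis $\{1, y_2-y_1\}$) is invertible; this determines the splitting uniquely modulo an allowable adjustment inside the filtered piece. Filtration compatibility then follows from the identities above: the generators $(y_2-y_1)+(z_2-z_1)$ and $(y_2-y_1)-\xm$ of the tensor-product $\xFlo$ on $\sfblgblb$ correspond to the generators of the direct-sum $\xFlo$ on $\sfblbg \oplus \ssqt \sfblbb$, and the critical product identity identifies the deeper filtration piece with the corresponding piece of $\ssqt \sfblbb$.

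Third, for commutativity of the right-hand square, I would apply $\xId \otimes \chm$, which under the identification $\sfblgpar \cong \sfblbg$ corresponds to the substitution $y_i \mapsto x_i$, to the image of the splitting in $\sfblgblb$, and compare with the direct-sum map $(a,b) \mapsto a + \xpsim(b)$. The comparison reduces to the Soergel identities and the formula for $\xpsim$ as multiplication by $\ym - \xm$. The second pair of diagrams, for $\sfblbblg$, is handled by the analogous construction with the roles of $\sfblbg$ and $\sfblbb$ swapped in the tensor product.

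The main obstacle is the careful bookkeeping of filtration shifts $\ssqt$ and sign conventions arising from the definitions of $\chp, \chm$, and $\xpsim$ as morphisms in the filtered derived category, and matching these against the explicit splitting. The essential algebraic input throughout is the Soergel relation $(y_2-y_1)^2 = \xm^2$ in $\sfblgblb$, which encapsulates the rank-two free $\mMblb$-module structure and yields all the relations needed for both the splitting and the commutativity of the diagrams.
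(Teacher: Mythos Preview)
Your approach is correct but takes a different route from the paper. You work directly with the underlying bimodules: you identify $\sfblgblb$ as a rank-two free module over $\mMblb$ (in the outer variables) with basis $\{1,\,y_2-y_1\}$, use the Soergel relation $(y_2-y_1)^2=\xm^2$ to build the splitting on explicit generators, and then check filtration compatibility by tracking the elements $(y_2-y_1)+(z_2-z_1)$ and $(y_2-y_1)-\xm$ that generate the filtration pieces. The paper instead never touches elements of the bimodule: it works entirely with the canonical filtered resolution~\eqref{eq:sqdiag} and applies a unipotent change of basis $\left(\begin{smallmatrix}1&-1\\0&1\end{smallmatrix}\right)$ to the middle term $\ssqt\bigl(\sfnparl\oplus\sfnparl\bigr)$, after which the complex visibly decomposes as $\sfnblbg\oplus\ssqt\,\sfnblbb$; filtration compatibility is automatic because the change of basis is upper-triangular in the $\shs$-grading. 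Your method is the classical Soergel-bimodule calculation adapted to the filtered setting and makes the algebra concrete, at the cost of some bookkeeping to match the element-level filtration with the cone-defined one. The paper's method stays inside the resolution framework used throughout (so the same trick recurs verbatim in Lemma~\ref{lm:decdblbb}) and makes the filtered isomorphism transparent without ever naming an element; the commutative diagrams are then dispatched as a short computation on the resolutions.
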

\begin{proof}
We will prove only the first splitting, the second one is proved similarly. We represent the \lhs bimodule by its canonical resolution~\eqref{eq:sqdiag} and then replace it by the isomorphic complex which splits:
\begin{equation}
\label{eq:dcmpo}
\begin{tikzpicture}[baseline=-0.25em]
\matrix (m) [ssmt,row sep=6.5em,column sep=1.5cm,ampersand replacement=\&]
{
\sfcnblgblb
\&
\sfnvirt
\&
\ssqt\bigl(\sfnparl \oplus \sfnparl\bigr)
\&
\ssqt^2\, \sfnvirt
\\
\sfnblbg \oplus\ssqt\, \sfnblbb
\&
\sfnvirt
\&
\ssqt\bigl(\sfnparl \oplus \sfnparl\bigr)
\&
\ssqt^2\, \sfnvirt
\\
};
\def\yshu{-1.25em}
\def\yshd{2.25em}
\node (Q1) at (m-1-2) [yshift=1.25em] {};
\node (Q2) at (m-2-2) [yshift=1.25em] {};
\node  (P12) at (m-1-2) [yshift=\yshu] {};
\node (P22) at (m-2-2) [yshift=\yshd] {};
\node  (P13) at (m-1-3) [yshift=\yshu] {};
\node (P23) at (m-2-3) [yshift=\yshd] {};
\node  (P14) at (m-1-4) [yshift=\yshu] {};
\node (P24) at (m-2-4) [yshift=\yshd] {};
\node (R11) at (m-1-1) [yshift=-1.25em] {};
\node at (m-1-1) [xshift=4.75em] {$\cong$};
\node at (m-2-1) [xshift=4.75em] {$\cong$};
\path[->,font=\footnotesize]
(m-1-2) edge node[auto] {$\bigl( \begin{smallmatrix}\vsd \\ \vsd \end{smallmatrix} \bigr)$} (m-1-3)
(m-2-2) edge node[auto] {$\bigl( \begin{smallmatrix} 0  \\ \vsd \end{smallmatrix} \bigr)$} (m-2-3)
(m-1-3) edge node[auto] {$\bigl( \begin{smallmatrix}\vsd & -\vsd \end{smallmatrix} \bigr)$} (m-1-4)
(m-2-3) edge node[auto] {$\bigl( \begin{smallmatrix}\vsd & 0 \end{smallmatrix} \bigr)$} (m-2-4)
(R11) edge node[auto] {$\cong$} (m-2-1)
(P12) edge node[auto] {$\xId$} (P22)
(P13) edge node[auto] {${\bigl( \begin{smallmatrix}1 & -1 \\ 0 &\; \;\;1 \end{smallmatrix} \bigr)}$} (P23)
(P14) edge node[auto] {$\xId$} (P24);
\node[draw,fit=(Q1) (m-1-2) (m-1-4) ] {};
\node[draw,fit=(Q2) (m-2-2) (m-2-4) ] {};
\end{tikzpicture}
\end{equation}
The lower lines of the diagrams~\eqref{eq:twcmdgs} are established by a straightforward computation.
\end{proof}
\begin{proof}[Proof of Theorem~\ref{th:R2}]
We prove only the first \hec\ of~\eqref{eq:hcrtw}, the second one is proved similarly.
The proof is well-known, we just have to verify that the isomorphisms and homotopies appearing there are filtered:
\[
\sfbrpbrm  \cong
\left[
\begin{tikzcd}[row sep=3em]
\sfparblb
\arrow{r}{\xId\otimes\chm}
\arrow{d}{\chp\otimes\xId}
&
\sfparpar
\arrow{d}{\chp\otimes\xId}
\\
\shs^{-1}\sfblgblb
\arrow{r}{-\xId\otimes\chm}
&
\shs^{-1}\sfblgpar
\end{tikzcd}
\right]
\cong
\left[
\begin{tikzcd}[ampre,row sep=3em]
\sfblbb
\rar{\chm}
\dar{\bigl( \begin{smallmatrix}0 \\ \xId\end{smallmatrix} \bigr)}
\&
\sfparl
\dar{\chp}
\\
\shs^{-1}\sfblbg \oplus\sfblbb
\rar{{\xsmmt{\xId & \xpsim}}}
\&
\shs^{-1}\sfblbg
\end{tikzcd}
\right]
\sim
\sfparl.
\]
The last homotopy comes from contracting the subcomplex $\shs^{-1}\sfblbg\xrightarrow{\;\xId\;}\shs^{-1}\sfblbg$ and then contracting the quotient complex
$\sfblbb\xrightarrow{\;\xId\;}\sfblbb$. Both complexes are filtered contractible.
\end{proof}

\section{Third Reidemeister move}
\label{s:thrm}
\begin{theorem}
\label{th:trrx}
There exists a \hec\
\begin{equation}
\label{eq:thrdh}
\begin{tikzpicture}
\matrix (m) [smmt, column sep=2.75em]
{\yectv{\zzvv{\lncf}{-25}{\dbrdm}{\dbrdm}{\dbrdm}}
&
\yectv{\zzmvv{\lncf}{-25}{\dbrdm}{\dbrdm}{\dbrdm}}\\};
\path[->,font=\scriptsize]
([yshift=\shar] m-1-1.east)  edge node[auto]{$\flr$} ([yshift=\shar] m-1-2.west)
([yshift=-\shar] m-1-2.west) edge node[auto]{$\frl$} ([yshift=-\shar] m-1-1.east);
\end{tikzpicture}
\end{equation}
such that the \hec\ morphisms $\flr$ and $\frl$ violate filtration by at most two units.
\end{theorem}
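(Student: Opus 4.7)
The plan is to track the filtration degree of an explicit Rouquier-style homotopy equivalence between the two 3-strand braid complexes, using the multi-filtered cube structure from the previous section together with the double-blob splittings of Lemma~\ref{lm:splto}.

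First, I will present each side of~\eqref{eq:thrdh} as a 3-dimensional multi-cube in $\ChbmFnQbxyfr$: each vertex $(\varepsilon_1,\varepsilon_2,\varepsilon_3)\in\{0,1\}^3$ of the cube for $\sigma_i\sigma_{i+1}\sigma_i$ carries a braid-graph bimodule $\xctvbyx{\brgrw}$ obtained by replacing the $k$-th crossing either by the identity (if $\varepsilon_k=0$) or by a black blob $\fblbj$ (if $\varepsilon_k=1$), with edge differentials given by the canonical morphisms $\chp$ and $\chm$; likewise for $\sigma_{i+1}\sigma_i\sigma_{i+1}$. Next, at each vertex I will simplify the associated braid-graph bimodule using repeated applications of the splitting $\sfblgblb\cong\sfblbg\oplus\ssqt\sfblbb$ (and its mirror) from Lemma~\ref{lm:splto}, so that the `triangle' graphs appearing on both sides of the move, which as purely symmetric-group objects give identical permutation bimodules, are written in a manifestly common form up to $\ssqt$-shifted summands.

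The second step is to construct $\flr$ and $\frl$ by first passing to the six-strand algebra (so both cubes share the same variable labels) and then carrying out a sequence of Gaussian eliminations that cancel the contractible subcomplexes appearing in the decompositions produced above. This is the same combinatorial reduction used in the classical Rouquier proof of the third Reidemeister move, but now performed inside $\ChbmFnQbxyfr$. I will then push the result down through the forgetful functor $\fFttD$ to obtain morphisms between $\yectv{\zzvv{\lncf}{-25}{\dbrdm}{\dbrdm}{\dbrdm}}$ and $\yectv{\zzmvv{\lncf}{-25}{\dbrdm}{\dbrdm}{\dbrdm}}$, and invoke Theorem~\ref{th:gncntr} and Theorem~\ref{th:hmeq} to guarantee a genuine homotopy equivalence at each stage.

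The filtration bookkeeping is the key, and the essential observation is that after the double-blob splittings, the only morphism appearing in the reduction that is not filtered in the strict sense is the $\xpsim$-type component (multiplication by $\ym-\xm$), which carries a $\ssqt$ on one side of its defining diagram and thus violates filtration by exactly two units. Every other building block, namely $\chp$, $\chm$, $\vsdp$, $\vsdm$, the identities used in Gaussian elimination, and the splitting isomorphisms from Lemma~\ref{lm:splto}, is filtered. Because in the explicit reduction each $\xpsim$ (or $\xpsip$) enters $\flr$ and $\frl$ at most once in any single matrix entry and no two such violators are composed, the total filtration violation of each component is bounded by two units; equivalently, $\shs^{2}\flr$ and $\shs^{2}\frl$ are filtered chain maps.

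The main obstacle will be in checking that no cascade of violations occurs during the Gaussian-elimination passes: specifically, that when one eliminates a contractible summand whose zig-zag involves a $\xpsim$, the resulting induced morphism on the complementary summand does not accumulate an additional $\ssqt$-shift beyond what is already present. I expect to handle this by organizing the Gaussian eliminations so that every cancelled pair is connected by a strictly filtered identity morphism (inherited from the splittings of Lemma~\ref{lm:splto}), confining the $\xpsim$ contributions to the matrix entries that describe $\flr$ and $\frl$ themselves. A completely analogous argument controls the homotopies $\hmA$ and $\hmB$, but the statement of the theorem only requires the bound on $\flr$ and $\frl$.
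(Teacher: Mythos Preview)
Your proposal has a genuine gap: you have misidentified where the filtration violation actually occurs. The morphisms $\xpsim$ and $\xpsip$ are \emph{filtered} as defined (with their $\ssqt$ shifts), and the double-blob splittings of Lemma~\ref{lm:splto} are likewise filtered isomorphisms. So the reduction you outline, using only these ingredients together with $\chp$, $\chm$, and Gaussian elimination along filtered identities, would produce a \emph{filtered} homotopy equivalence if it went through --- which it cannot, since the filtered complexes on the two sides of R3 are genuinely inequivalent.

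The place where your argument breaks is at the cube vertex carrying the triple-blob bimodule. In the standard Rouquier reduction one uses the (unfiltered) splitting of the three-blob bimodule into the ``big'' singular Soergel bimodule plus a copy of a two-blob bimodule. That splitting is \emph{not} a filtered isomorphism: the two direct summands acquire different filtrations on the two sides of the move, and the projections and inclusions that are inverse to the filtered canonical maps are exactly the morphisms that violate filtration. Two-strand double-blob splittings alone never touch this vertex. The paper handles this by building a reflection-symmetric intermediate complex in which the triple blob is replaced by the big Soergel bimodule itself; the filtration violation is then localized in the retraction from the cone of $\xxi$ onto this bimodule, and the bound of two units comes from the filtration depth of the triple-blob canonical resolution, not from any $\xpsim$.

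So to repair your plan you must confront the triple-blob splitting explicitly, identify which of its components are filtered and which are not, and bound the latter; your current bookkeeping never sees this step.
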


\begin{proof}
The general strategy is standard. The braids of \ex{eq:thrdh} are related by the reflection symmetry with respect to the vertical axis. We construct a special reflection symmetric complex $\cpCsm$ and establish two \hec s
\begin{equation}
\label{eq:thrdc}
\begin{tikzpicture}
\matrix (m) [smmt, column sep=2.75em]
{\yectv{\zzvv{\lncf}{-25}{\dbrdm}{\dbrdm}{\dbrdm}}
&
\cpCsm
&
\yectv{\zzvv{\lncf}{-25}{\dbrdm}{\dbrdm}{\dbrdm}}\\};
\path[->,font=\scriptsize]
([yshift=\shar] m-1-1.east)  edge node[auto]{$\fls$} ([yshift=\shar] m-1-2.west)
([yshift=-\shar] m-1-2.west) edge node[auto]{$\fsl$} ([yshift=-\shar] m-1-1.east)
([yshift=\shar] m-1-2.east)  edge node[auto]{$\fsr$} ([yshift=\shar] m-1-3.west)
([yshift=-\shar] m-1-3.west) edge node[auto]{$\frs$} ([yshift=-\shar] m-1-2.east);
\end{tikzpicture},
\end{equation}
such that $\fsl$ and $\fsr$ are filtered, while $\fls$ and $\frs$ violate filtration by two units. By symmetry, it is sufficient to establish only the left \hec, and the claim of the theorem follows.

Consider the left complex in the diagram~\eqref{eq:thrdc}:
\begin{eqnarray}
\label{eq:bgthrc}
\sthmmmo
& = &
\left[
\begin{tikzpicture}[baseline=(current bounding box.base)]
\node (N) at (0,0)
{
\begin{tikzcd}[column sep=0.7cm]
&
\sftlelob
\arrow{r}
\arrow{dr}
&
\sflblbbp
\arrow{dr}
\arrow[crossing over, leftarrow]{dl}
\\
\sftvrbb
\arrow{ur}
\arrow{r}
\arrow{dr}
&
\sfdblbbp
&
\sflpblbb
\arrow{r}
&
\sftparl
\\
&
\sftrilob
\arrow{r}
\arrow{ur}
&
\sflblbbp
\arrow{ur}
\arrow[ crossing over, leftarrow]{ul}
\end{tikzcd}
};
\end{tikzpicture}
\right]
\\
\nonumber
&=&
\Biggl[
\begin{tikzcd}[column sep=2cm]
\sthmmb
\arrow{r}{\xId\otimes\chm\otimes\xId}
&
\sthmmp
\end{tikzcd}
\Biggr]
\end{eqnarray}
Here unlabelled arrows correspond to the morphisms $\chm$ of the type~\eqref{eq:tpchm}.
The cone in the last line comes from splitting the cube complex into the back face and the front face. The second complex of the cone can be simplified with the help of two lemmas. The first one is the analog of Lemma~\ref{lm:splto}:
\begin{lemma}
\label{lm:decdblbb}
The following filtered bimodule splits:
\begin{equation}
\label{eq:spldblb}
\sfblbblb \cong\sfblbb \oplus\ssqt\, \sfblbg,
\end{equation}
and the following diagram is commutative:
\[
\xymatrix{
\sfparblb
\ar[d]^-{\cong}
&
\sfblbblb
\xars[l]_-{\chm\otimes\xId}
\xars[r]^-{\xId\otimes\chm}
\ar[d]^-{\cong}
&
\sfblbpar
\ar[d]^-{\cong}
\\
\sfblbb
&
\sfblbb\oplus
\ssqt\,
\sfblbg
\ar[r]^-{{\xsmmt{\xId & 0}}}
\ar[l]_-{{\xsmmt{\xId & \xpsip}}}
&
\sfblbb
}
\]
\end{lemma}
\begin{proof}
The proof is similar to that of Lemma~\ref{lm:splto}, the analog of the diagram~\eqref{eq:dcmpo} being
\begin{equation}
\label{eq:dcmpt}
\begin{tikzpicture}[baseline=-0.25em]
\matrix (m) [ssmt,row sep=6.5em,column sep=1.5cm,ampersand replacement=\&]
{
\sfcnblbblb
\&
\sfnparl
\&
\ssqt\bigl(\sfnvirt \oplus \sfnvirt\bigr)
\&
\ssqt^2\, \sfnparl
\\
\sfnblbb \oplus\ssqt\, \sfnblbg
\&
\sfnparl
\&
\ssqt\bigl(\sfnvirt \oplus \sfnvirt\bigr)
\&
\ssqt^2\, \sfnparl
\\
};
\def\yshu{-1.25em}
\def\yshd{2.25em}
\node (Q1) at (m-1-2) [yshift=1.25em] {};
\node (Q2) at (m-2-2) [yshift=1.25em] {};
\node  (P12) at (m-1-2) [yshift=\yshu] {};
\node (P22) at (m-2-2) [yshift=\yshd] {};
\node  (P13) at (m-1-3) [yshift=\yshu] {};
\node (P23) at (m-2-3) [yshift=\yshd] {};
\node  (P14) at (m-1-4) [yshift=\yshu] {};
\node (P24) at (m-2-4) [yshift=\yshd] {};
\node (R11) at (m-1-1) [yshift=-1.25em] {};
\node at (m-1-1) [xshift=4.75em] {$\cong$};
\node at (m-2-1) [xshift=4.75em] {$\cong$};
\path[->,font=\footnotesize]
(m-1-2) edge node[auto] {$\bigl( \begin{smallmatrix}\vsd \\ \vsd \end{smallmatrix} \bigr)$} (m-1-3)
(m-2-2) edge node[auto] {$\bigl( \begin{smallmatrix} 0  \\ \vsd \end{smallmatrix} \bigr)$} (m-2-3)
(m-1-3) edge node[auto] {$\bigl( \begin{smallmatrix}\vsd & -\vsd \end{smallmatrix} \bigr)$} (m-1-4)
(m-2-3) edge node[auto] {$\bigl( \begin{smallmatrix}\vsd & 0 \end{smallmatrix} \bigr)$} (m-2-4)
(R11) edge node[auto] {$\cong$} (m-2-1)
(P12) edge node[auto] {$\xId$} (P22)
(P13) edge node[auto] {${\bigl( \begin{smallmatrix}1 & -1 \\ 0 &\; \;\;1 \end{smallmatrix} \bigr)}$} (P23)
(P14) edge node[auto] {$\xId$} (P24);
\node[draw,fit=(Q1) (m-1-2) (m-1-4) ] {};
\node[draw,fit=(Q2) (m-2-2) (m-2-4) ] {};
\end{tikzpicture}
\end{equation}
\end{proof}
\begin{lemma}
There is a \hec
\begin{equation}
\label{eq:dblbrhe}
\sfbrmbrm \sim
\left[
\begin{tikzcd}
\ssqt\,\sfblbg
\arrow{r}{\xpsip}
&
\sfblbb
\arrow{r}{\chm}
&
\sfparl
\end{tikzcd}
\right].
\end{equation}
\end{lemma}
\begin{proof}
Consider the isomorphism of complexes
\begin{equation}
\label{eq:dcmph}
\begin{tikzcd}[row sep=1cm,column sep=1cm,ampersand replacement=\&]
\&
\Biggl[\sfblbblb
\arrow
{d}{\cong}
\arrow{rr}{\bigl(\begin{smallmatrix}\chm\otimes\xId \\ \xId\otimes\chm \end{smallmatrix}\bigr)}
\&\&
\sfblbb\oplus\sfblbb
\arrow{rr}{{( \begin{smallmatrix}\chm, & - \chm \end{smallmatrix} )}}
\arrow{d}{{\bigl( \begin{smallmatrix}1 & -1 \\ 0 &\; \;\;1 \end{smallmatrix} \bigr)}}
\&\&
\sfparl\Biggr]
\arrow{d}{\xId}
\\
\&
\Bigl[\sfblbb \oplus\ssqt\, \sfblbg
\arrow{rr}{{\bigl( \begin{smallmatrix}  0 &\xpsip \\ \xId & 0  \end{smallmatrix} \bigr)}}
\&\&
\sfblbb\oplus\sfblbb
\arrow{rr}{{( \begin{smallmatrix}\chm & 0 \end{smallmatrix} )}}
\&\&
\sfparl\Bigr]
\end{tikzcd}
\end{equation}
The complex at the top line represents $\sfbrmbrm$.
The complex at the bottom line is a sum of the contractible complex $\bigl[ \sfblbb\xrightarrow{\;\xId\;}\sfblbb \bigr]$ and the complex in the \rhs of \ex{eq:dblbrhe}.
\end{proof}
 Adding an extra strand to the \hec~\eqref{eq:dblbrhe} turns it into
 \begin{equation}
\label{eq:dblbrht}
\sfbrmbrmp \sim
\left[
\begin{tikzcd}
\ssqt\,\sfblbgp
\arrow{r}{\xpsip}
&
\sfblbbp
\arrow{r}{\chm}
&
\sfparlp
\end{tikzcd}
\right].
\end{equation}
A substitution of the \rhs complex for the front face of the cube complex~\eqref{eq:bgthrc} turns the latter into the complex
\begin{eqnarray}
\label{eq:trintcn}
\sthmmmo & \sim &
\left[
\begin{tikzcd}[row sep=0cm,column sep=0.7cm]
&
\shs\sflblbgp
\arrow{ddddr}{\xpsip}
&
\\
{}
\\
{}
\\
{}
\\
&
\sftlelob
\arrow{r}
\arrow{ddr}
&
\sflblbbp
\arrow{dr}
\\
\sftvrbb
\arrow{uuuuur}{\xxi}
\arrow{ur}
\arrow{dr}
&
{}
&
{}
&
\sftparl
\\
&
\sftrilob
\arrow[crossing over]{uur}
\arrow{r}
&
\sflpblbb
\arrow{ur}
\end{tikzcd}
\right]
\\
\label{eq:trbrscn}
&\sim &
\left[
\begin{tikzcd}
\cpA \arrow{r}{\xkapt}
&
\cpB
\end{tikzcd}
\right].
\end{eqnarray}
Here $\xxi$ is a composition of morphisms
\begin{equation}
\label{eq:xxi}
\begin{tikzcd}[column sep=1.5cm]
\sftvrbb
\arrow{r}{\xId\otimes\chm\otimes\xId}
\arrow[bend left]{rr}{\xxi}
&
\sfdblbbp
\arrow{r}{\prg}
&
\shs\sflblbgp,
\end{tikzcd}
\end{equation}
while
\begin{equation}
\label{eq:cpsAB}
\cpA = \left[
\begin{tikzcd}
\sftvrbb \arrow{r}{\xxi}
&
\shs\sflblbgp
\end{tikzcd}
\right],
\qquad
\cpB =
\left[
\begin{tikzcd}[row sep=0cm,column sep=0.7cm]
\sftlelob
\arrow{r}
\arrow{ddr}
&
\sflblbbp
\arrow{dr}
\\
{}
&
{}
&
\sftparl
\\
\sftrilob
\arrow[crossing over]{uur}
\arrow{r}
&
\sflpblbb
\arrow{ur}
\end{tikzcd}
\right]
\end{equation}
and the morphism $\xkapt$ in the cone~\eqref{eq:trbrscn} is determined by the diagram~\eqref{eq:trintcn}.

It is well-known (see \eg Lemma~4.7 in~\cite{KRpw} and references therein) that \emph{if we ignore filtration}, then the bimodule appearing at the head of the complex~\eqref{eq:xxi} splits:
\begin{equation}
\label{eq:splstr}
\sftvrbb \cong \sftvrtb \oplus \sflblbgp,
\end{equation}
where
\[
\sftvrtb=\Qbxy/(\dfbxyv{1},\dfbxyv{2},\dfbxyv{3})
\]
and  $p_k(\bfx) = x_1^k + x_2^k + x_3^k$. Moreover,
\begin{equation}
\label{eq:kercng}
\sftvrtb \cong \ker\xxi
\end{equation}
and $\xxi$ acts as an isomorphism on the second component of the sum~\eqref{eq:splstr}.
In terms of the derived category, the isomorphism~\eqref{eq:kercng} says that there is a \qiso
\[
\sftvrtb\simeq\Conv{\xxi}
\]
and the splitting~\eqref{eq:splstr} says that one of the canonical maps in the corresponding distinguished triangles is trivial:
\begin{equation}
\label{eq:dstto}
\begin{tikzcd}
\sflblbgp
\arrow{r}{0}
&
\Conv{\xxi}\cong\sftvrtb
\arrow{r}
&
\sftvrbb
\arrow{r}{\xxi}
&
\sflblbgp.
\end{tikzcd}
\end{equation}

Now consider the \cres\ of the bimodule~\eqref{eq:splstr}:
%
%
%
\begin{eqnarray}
\label{eq:lngsmth}
\sthbcn & = &
\begin{tikzpicture}[baseline=(current bounding box.base)]
\node (N) at (0,0)
{
\begin{tikzcd}[column sep=0.7cm]
&
\shs\sfcntvirp
\arrow{r}
\arrow{dr}
&
\shst
\sfcntrilo
\arrow{dr}
\arrow[crossing over, leftarrow]{dl}
\\
\sfcntparl
\arrow{ur}
\arrow{r}
\arrow{dr}
&
\shs
\sfcntpvir
&
\shst\sfcntparl
\arrow{r}
&
\shsh\sfcntvrtr
\\
&
\shs\sfcntvirp
\arrow{r}
\arrow{ur}
&
\shst\sfcntlelo
\arrow{ur}
\arrow[crossing over, leftarrow]{ul}
\end{tikzcd}
};
\node[draw,fit=(N) ] {};
\end{tikzpicture}
\\
\nonumber
&=&
\begin{tikzpicture}[baseline=(current bounding box.base)]
\node (N) at (0,0)
{
\begin{tikzcd}[column sep=2cm]
\sthbbpcn
\arrow{r}{\xId\otimes\vsdp\otimes\xId}
&
\shs\sthbbvcn
\end{tikzcd}
};
\node[draw,fit=(N) ] {};
\end{tikzpicture}
\end{eqnarray}
The arrows in the large boxed diagram correspond to \tvsd s similar to~\eqref{eq:ttvsd}.
The last cone comes from the presentation~\eqref{eq:rqct} for the middle \blb, its components correspond to the back and front faces of the cube complex. Decomposing the first component of the cone in accordance with Lemma~\ref{lm:decdblbb}, we turn the cube complex~\eqref{eq:lngsmth} into the following complex
%
%
\begin{eqnarray}
\label{eq:lngsmtf}
\sthbcn & \cong &
\begin{tikzpicture}[baseline=(current bounding box.base)]
\node (N) at (0,0)
{
\begin{tikzcd}[row sep=0cm,column sep=0.7cm]
&
\shs\sfcntvirp
\arrow{r}
\arrow{ddddr}
&
\shst\sfcntparl
\arrow{dddddr}
\\
{}
\\
{}
\\
{}
\\
&
\shs\sfcntvirp
\arrow{r}
\arrow{ddr}
&
\shst\sfcntrilo
\arrow{dr}
\\
\sfcntparl
\arrow{ur}
\arrow{dr}
&
{}
&
{}
&
\shsh\sfcntvrtr
\\
&
\shs\sfcntpvir
\arrow[crossing over]{uur}
\arrow{r}
&
\shst\sfcntlelo
\arrow{ur}
\end{tikzcd}
};
\node[draw,fit=(N) ] {};
\end{tikzpicture}
\\
\label{eq:trvcn}
&=&
\begin{tikzpicture}[baseline=(current bounding box.base)]
\node (N) at (0,0)
{
\begin{tikzcd}[column sep=1cm]
\shs\, \sflcnblbgp
\arrow{r}{\xfxi}
&
\xcpCs
\end{tikzcd}
};
\node[draw,fit=(N) ] {};
\end{tikzpicture}
\end{eqnarray}
The first component of the cone~\eqref{eq:trvcn} is the complex in the first row in the big diagram, while the second component is the complex in the other two bottom rows. Since the trasformation of the diagram~\eqref{eq:lngsmth} into the diagram~\eqref{eq:lngsmtf} is performed by the splitting~\eqref{eq:spldblb}, which is a part of the morphism $\xxi$ in the diagram~\eqref{eq:xxi}, it follows that $\xxi$ is one of the canonical morphisms in the distinguished triangle of the cone~\eqref{eq:trvcn}:
\begin{equation}
\label{eq:dsttt}
\begin{tikzcd}
\shs\,\sflblbgp
\arrow{r}{\xfxi}
&
\xcpCs
\arrow{r}{\xei}
&
\sftvrbb
\arrow{r}{\xxi}
&
\shs\,\sfcnlblbgp.
\end{tikzcd}
\end{equation}
Both distinguished traingles~\eqref{eq:dsttt} and~\eqref{eq:dstto} are associated with the same morphism $\xxi$, hence they are the same. This identification means
that the complex $\xcpCs$ is \qisc\ to the bimodule $\sftvrtb$ and it represents a free resolution of the latter. Hence the filtration of $\xcpCs$ determines the filtration of the bimodule $\sftvrtb$ and we declare the former to be the \cfres\ of the latter:
\begin{equation}
\label{eq:blbtcrs}
\sfcntvrtb =
\begin{tikzpicture}[baseline=(current bounding box.base)]
\node (N) at (0,0)
{
\begin{tikzcd}[row sep=0cm]
&
\shs\sfcntvirp
\arrow{r}
\arrow{ddr}
&
\shst\sfcntrilo
\arrow{dr}
\\
\sfcntparl
\arrow{ur}
\arrow{dr}
&
{}
&
{}
&
\shsh\sfcntvrtr
\\
&
\shs\sfcntpvir
\arrow[crossing over]{uur}
\arrow{r}
&
\shst\sfcntlelo
\arrow{ur}
\end{tikzcd}
};
\node[draw,fit=(N) ] {};
\end{tikzpicture}
\end{equation}
%
%
Thus the splitting~\eqref{eq:splstr} can be presented as a pair of (unfiltered) isomorphisms
\[
\begin{tikzpicture}[ampersand replacement=\&]
\matrix (m) [smmt, column sep=2.75em]
{
\sftvrbb
\&
\left(\sftvrtb \oplus \shs\sflblbgp\right),
\\};
\path[->,font=\scriptsize]
([yshift=\shar] m-1-1.east)  edge node[auto]
{$\bigl( \begin{smallmatrix}\xeit \\ \xxi \end{smallmatrix} \bigr)$} ([yshift=\shar] m-1-2.west)
([yshift=-\shar] m-1-2.west) edge node[auto]{$( \begin{smallmatrix}\xei & \xxit \end{smallmatrix} )$} ([yshift=-\shar] m-1-1.east);
\end{tikzpicture}
\]
where the untilded homomorphisms $\xxi$ and $\xei$ are filtered and appear in the distinguished triangle~\eqref{eq:dsttt}, whereas the tilded homomorphisms $\xxit$ and $\xeit$ violate filtration. However, it is easy to see that the depth of filtration limits these violation to two units.

\begin{lemma}
There is a \hec\ (retraction) from $\cpA$ of~\eqref{eq:cpsAB} to $\sftvrtb$
\begin{equation}
\label{eq:smhmte}
\begin{tikzpicture}
\matrix (m) [smmt, column sep=1.75em]
{
\cpA = \Biggl [
\sftvrbb
&
\shs\sflblbgp\Biggr]
&&
\sftvrtb,
\\};
\path[->,font=\scriptsize]
([yshift=\shar] m-1-2.east)  edge node[auto]
{$\xfAas$} ([yshift=\shar] m-1-4.west)
([yshift=-\shar] m-1-4.west) edge node[auto]{$\xfasA$} ([yshift=-\shar] m-1-2.east)
(m-1-1)  edge node[auto]{$\xxi$} (m-1-2)
([yshift=-0.3em]m-1-1.west) edge [loop left,distance=0.75cm,
] node [auto]{$\hmA$} ([yshift=0.3em]m-1-1.west)
;
\end{tikzpicture}
\end{equation}
such that $\xfasA$ is filtered, while $\xfAas$ and $\hmA$ violate filtration by two units, and there is a relation
$\hmA\xfasA=0$.
\end{lemma}
\begin{proof}
The \hec s $\xfasA$ and $\xfAas$ are
\[
\begin{tikzcd}
\sftvrtb
\arrow{d}{\xei}
\\
\Biggl [
\sftvrbb \arrow{r}{\xxi}
\arrow{d}{\xeit}
&
\shs\sflblbgp\Biggr]
\\
\sftvrtb
\end{tikzcd}
\]
(note that $\xxi\xei=0$) while the homotopy $\hmA$ is
\[
\begin{tikzcd}
\Biggl [
\sftvrbb \arrow{r}{\xxi}
&
\shs\sflblbgp\Biggr]
\arrow[swap]{dl}{\xxit}
\\
\Biggl [
\sftvrbb \arrow{r}{\xxi}
&
\shs\sflblbgp\Biggr].
\end{tikzcd}
\]
\end{proof}

Define the complex $\cpCsm$ of \ex{eq:thrdc} as
\begin{equation}
\label{eq:dfcpsm}
\cpCsm =
\begin{tikzcd}
\Bigl[\sftvrtb
\arrow{r}{\xkap}
&
\cpB
\Bigr],
\end{tikzcd}
\end{equation}
where $\cpB$ is defined by \ex{eq:cpsAB}, while the
morphism $\xkap$ is a composition of $\xfAas$ and $\xkapt$:
\[
\begin{tikzpicture}
\matrix (m) [smmt, column sep=1.75em]
{
\sftvrtb
&&
\cpA=\Biggl [
\sftvrbb
&
\shs\sflblbgp\Biggr]
&&
\cpB
\\};
\path[->,font=\scriptsize]
(m-1-1) edge node[auto]{$\xfAas$} (m-1-3)
(m-1-3) edge node[auto]{$\xxi$} (m-1-4)
(m-1-4) edge node[auto]{$\xkapt$} (m-1-6)
(m-1-1) edge [bend left=25] node[auto]{$\xkap$} (m-1-6);
\end{tikzpicture}
\]
Since $\xfAas$ and $\xkapt$ are filtered, $\xkap$ is also filtered.
\begin{lemma}
There is a \hec\ (retraction)
\[
\begin{tikzpicture}
\matrix (m) [smmt, column sep=2em]
{
\cpCsm
=
\Bigl[\sftvrtb
&
\cpB
\Bigr]
&&
\Bigl[\cpA
&
\cpB
\Bigr]
\\};
\path[->,font=\scriptsize]
(m-1-1) edge node[auto]{$\xkap$} (m-1-2)
(m-1-4) edge node[auto]{$\xkapt$} (m-1-5)
([yshift=\shar] m-1-2.east)  edge node[auto]{$\zzf$} ([yshift=\shar] m-1-4.west)
([yshift=-\shar] m-1-4.west) edge node[auto]{$\zzg$} ([yshift=-\shar] m-1-2.east)
([yshift=0.3em]m-1-5.east) edge [loop right,distance=0.75cm,] node [auto]{$\zzh$} ([yshift=-0.3em]m-1-5.east);
\end{tikzpicture}
\]
such that $\zzf$ is filtered while $\zzg$ and $\zzh$ violate filtration by at most two units.
\end{lemma}
\begin{proof}
Since there is a relation $\hmA\xfasA=0$, then in accordance with Lemma~\eqref{lm:lmcn} one can choose \hec s $\zzf$ and $\zzg$  as
\[
\begin{tikzcd}
\Bigl[\cpA \arrow{r}{\xkapt}
\arrow{d}{\xfAas}
\arrow{dr}{\xkapt\,\hmA}
&
\cpB
\Bigr]
\arrow{d}{\xId}
\\
\Bigl[\sftvrtb
\arrow{r}{\xkap}
\arrow{d}{\xfasA}
&
\cpB
\Bigr]
\arrow{d}{\xId}
\\
\Bigl[\cpA \arrow{r}{\xkapt}
&
\cpB
\Bigr],
\end{tikzcd}
\]
while the homotopy $\zzh$ can be chosen as
\[
\begin{tikzcd}
\Bigl[\cpA
\arrow{r}{\xkapt}
\arrow{d}{\hmA}
&
\cpB
\Bigr]
\\
\Bigl[\cpA
\arrow{r}{\xkapt}
&
\cpB
\Bigr]
\end{tikzcd}
\]
Since $\xfasA$ and $\xkapt$ are filtered, while $\hmA$ violates filtration by at most two units, it follows that $\zzf$ is filtered, while $\zzg$ and $\zzh$ violate filtration by at most two units.
\end{proof}
Combining this lemma with the filtered \hec~\eqref{eq:trbrscn}, we get the following
\begin{corollary}
There is a \hec\ (retraction)
\[
\begin{tikzpicture}
\matrix (m) [smmt, column sep=2.75em]
{
\cpCsm
&
\yectv{\zzvv{\lncf}{-25}{\dbrdm}{\dbrdm}{\dbrdm}}
\\};
\path[->,font=\scriptsize]
([yshift=\shar] m-1-1.east)  edge node[auto]{$\fsr$} ([yshift=\shar] m-1-2.west)
([yshift=-\shar] m-1-2.west) edge node[auto]{$\frs$} ([yshift=-\shar] m-1-1.east);
\end{tikzpicture}
\]
such that $\fsr$ is filtered, while $\frs$ violates filtration by two units.
\end{corollary}

It remains to show that the complex $\cpCsm$ of \ex{eq:dfcpsm} is symmetric with respect to reflection of the diagrams across the vertical axis. Indeed, the complex has the form
\[
\cpCsm =
\left[
\begin{tikzcd}[row sep=0cm,column sep=0.7cm]
&
\sftlelob
\arrow{r}
\arrow{ddr}
&
\sflblbbp
\arrow{dr}
\\
\sftvrtb
\arrow{ur}[pos=0.65]{\zzfu}
\arrow{dr}[swap,pos=0.65]{\zzfd}
&
{}
&
{}
&
\sftparl
\\
&
\sftrilob
\arrow[crossing over]{uur}
\arrow{r}
&
\sflpblbb
\arrow{ur}
\end{tikzcd}
\right]
\]
where the unmarked arrows denote the morphisms $\chm$ similar to~\eqref{eq:tpchm}, while $\zzfu$ and $\zzfd$ are components of the morphism $\xkap$. Since the homotopy equivalence from the complex~\eqref{eq:bgthrc} to the complex~\eqref{eq:trintcn} involves the homotopy of only the front face of the cube~\eqref{eq:bgthrc}, the morphisms $\zzfu$ are compositions of $\xei$ and \tvsd\ morphisms:
\[
\begin{tikzcd}
&&
\sftlelob
\\
\sftvrtb
\arrow{r}{\xei}
\arrow[bend left=10]{rru}{\zzfu}
\arrow[bend right=10]{rrd}[swap]{\zzfd}
&
\sftvrbb
\arrow{ru}
\arrow{rd}
\\
&&
\sftrilob
\end{tikzcd}
\]
Now it is easy to verify that $\zzfu$ and $\zzfd$ are canonical morphisms associated with two presentations of $\sftlelob$ as cones originating from the splits of the canonical complex~\eqref{eq:blbtcrs}:
\[
\sftvrtb \simeq
\begin{tikzpicture}[commutative diagrams/every diagram]
\node (N) at (0,0)
{
\begin{tikzcd}
\sftlelob
\rar
&
\shs^2\sftvrbu
\end{tikzcd}
};
\node[draw,fit=(N)] {};
\end{tikzpicture}
\simeq
\begin{tikzpicture}[commutative diagrams/every diagram]
\node (N) at (0,0)
{
\begin{tikzcd}
\sftrilob
\rar
&
\shs^2\sftvrbd
\end{tikzcd}
};
\node[draw,fit=(N)] {};
\end{tikzpicture}
\]
%
%
%
%
the first (resp. the second) cone corresponing to splitting off the upper-right (resp. upper-left) side of the hexagonal complex~\eqref{eq:blbtcrs}. Now the symmetry of the complex $\cpCsm$ becomes apparent, and this completes the proof of Theorem~\ref{th:trrx}.
\end{proof}

\section{Markov moves}
\label{s:mmvs}
\subsection{Filtered homology of a link diagram}
%
%

Let $\xLbb$ be the oriented unframed link diagram constructed by the circular closure of an $\bstr$-strand \brwd\ $\brwb$.
We define the \fcompl\ $\ctxLbb$ of a link diagram $\xLbb$ as the result of replacing the filtered $\Qbxy$-modules in the complex $\xctvbyx{\brwb}$ by their derived tensor products with the \dgbm\ corresponding to the $\bstr$-strand identity braid $\xIdbstr$
with a special grading shift:
\begin{equation}
\label{eq:cmplb}
\ctxLbb =
\stashf^{\bstr}
\,\Bigl( \xctvbyx{\brwb}\otdr_{\Qbxy}
\xctvbyx{\xIdbstr}\Bigr),
\end{equation}
where
$
\xctvbyx{\xIdbstr}=\Qbxy/(y_1 - x_1,\ldots,y_{\bstr} - x_{\bstr}).
$
A derived tensor product of two filtered complexes in the derived category $\DQfbxy$ is an object in the homotopy category of filtered complexes $\KQfb$. Hence $\ctxLbb$ is an object of the category $\ChKQfb$ of chain complexes over $\KQfb$. Note that generally a complex in the category $\KQfb$ is not \het\ to its homology because of the filtration.

\begin{remark}
\label{rm:mrmvo}
Two \brwd s $\brwb$ and $\brwbp$ yield the same link diagram by circular closure if they
are related by the first Markov move, that is, if there exist \brwd s $\brwbo$ and $\brwbt$ such that
$\brwb = \brwbt\brwbo$, $\brwbp = \brwbo\brwbt$. Obviously, the complex $\ctxLbb$ is invariant under the first Markov move, because of the isomorphisms
\begin{equation}
\label{eq:cplcls}
\ctxLbb \cong
\stashf^{\bstr}
\,
\Bigl(\xctvbyx{\brwbo} \otdr_{\Qbxy} \xctvbxy{\brwbt}\Bigr)
\cong
\ctxLbbp.
\end{equation}
Hence the complex~\eqref{eq:cmplb} is indeed determined by the link diagram $\xLbb$ rather than by the \brwd\ $\brwb$.
\end{remark}

An application of the homology with respect to the inner differential (that is, of the category $\KQfb$) and with respect to the outer differential (that is, of category $\ctChv{\dmmy}\;$) produces two more invariants of the diagram $\xLbb$:
\begin{equation}
\label{eq:innhom}
\Hmint(\ctxLbb) =
\stashf^{\bstr}
\,\HHm(\ctbrwb),
\end{equation}
($\HHm$ being the \Hchh), which is an object of the category $\ChQfb$ of filtered, graded chain complexes, and the filtered triply graded homology
\[
\Htg(\xLbb) = \Hmext\bigl(\Hmint(\ctxLbb)\bigr).
\]

\begin{remark}
\label{rm:sqimp}
Since $\Hmint(\ctxLbb)$ is derived from $\ctxLbb$ and $\Htg(\xLbb)$ is derived from $\Hmint(\ctxLbb)$, then for two \brwd s $\brwb$ and $\brwbp$ there is a sequence of implications:
\[
\ctxLbbp\sim\ctxLbb \quad\Rightarrow\quad \Hmint(\ctxLbbp)\sim \Hmint(\ctxLbb)
\quad\Rightarrow\quad\Htg(\ctxLbbp)\cong\Htg(\ctxLbb).
\]
\end{remark}


\subsection{Markov moves}

Fix an $\bstr$-strand \brwd\ $\brwb$. By definition, the braids
\begin{equation}
\label{eq:mrkmts}
\brwbp = (\sgo\sqbr\xIdbrn)(\xIdbro\sqbr\brdb),\qquad \brwbpp = (\sgomo\sqbr\xIdbrn)(\xIdbro\sqbr\brdb)
\end{equation}
are the result of applying Markov moves 2A and 2B to $\brwb$, so the circular closures $\xLbb$, $\xLbbp$ and $\xLbbpp$ represent isotopic link diagrams.

\begin{theorem}
\label{th:mrkmvt}
For any \brwd\ $\brwb$ and for the \brwd s~\eqref{eq:mrkmts}, which result from applying Markov moves 2A and 2B to $\brwb$, there are \hec s
\begin{equation}
\label{eq:smveqv}
\ctxLbbp\sim\ctxLbb,\qquad
\Hmint(\ctxLbbpp)\sim \Hmint(\ctxLbb).
\end{equation}
\end{theorem}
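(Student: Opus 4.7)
The plan is to localise each Markov move to the two leftmost strands and reduce the statement to a two-strand Hochschild computation, after which the asymmetry between 2A and 2B reflects the directionality of the filtrations of $\sfblbg$ and $\sfblbb$.

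First, I would use the cyclic identification~\eqref{eq:cplcls} (the Markov~I invariance from Remark~\ref{rm:mrmvo}) to move the factor $\sgo\sqbr\xIdbrn$ (respectively $\sgomo\sqbr\xIdbrn$) of~\eqref{eq:mrkmts} next to the closing diagonal bimodule. This concentrates the modification in a two-variable block: the trace over $(x_1,y_1)$ of $\xctv{\sgo^{\pm 1}}$, with the remaining action of $\brdb$ supplying a fixed filtered \Sbmdl\ that tensors in from the outside. By the monoidal structure of $\ctdm$ and the compatibility of the derived tensor product with the grading shifts in~\eqref{eq:cmplb}, it suffices to analyse this two-strand block.

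For Markov~2A I would insert the filtered Rouquier resolution~\eqref{eq:rqco} of $\sfbrdp$ and tensor with the diagonal bimodule for the closed strand. The closure of $\sfparl$ gives a rank-one free $\Qbx$-module in the appropriate $q$-shift, while the closure of $\sfblbg$, unfolded via~\eqref{eq:fdgel}, splits into a piece $\ssqt\sfparl$ that is cancelled by the image of $\chp$ through a \emph{filtered} contraction, together with a piece $\sfvirt$ whose closure is filtered-contractible. The result is a filtered homotopy equivalence in $\ChKQfb$ to a single copy of $\sfparl$ with exactly the shift needed to re-assemble $\ctxLbb$; transfer of this equivalence across the remaining tensor factors, by Lemma~\ref{lm:lmcn}, yields $\ctxLbbp\sim\ctxLbb$.

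For Markov~2B the analogous analysis starts from~\eqref{eq:rqct}, but now $\sfblbb$ carries the filtration $\sfblbb\supset\shq^2\sfvirt$ with quotient $\sfparl$, and the extension class linking them is the morphism $\vsdp$ of~\eqref{eq:mcns}. Closing up and expanding the cone produces an outer differential piece coming from $\vsdp$ that is \emph{not} filtered-null-homotopic in $\ChKQfb$. However, it becomes null-homotopic in $\KQfb$ once the inner Hochschild homology of each constituent bimodule has been taken: using the explicit form of $\vsdp$ and the Hochschild differential one checks that this piece lies in the image of the inner differential. A cone-transfer argument in the style of Lemma~\ref{lm:lmcn}, applied after this inner reduction, then identifies $\Hmint(\ctxLbbpp)$ with a filtered shift of $\Hmint(\ctxLbb)$ in $\ChQfb$.

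The main obstacle is Markov~2B: one has to verify that the filtration-violating contribution from the extension $\vsdp$ inside the closed-up $\sfblbb$ is exact with respect to the inner differential (rather than merely homotopic to zero in the unfiltered sense). This bookkeeping parallels the two-unit filtration violation in the third Reidemeister move (Theorem~\ref{th:trrx}) and explains why the second equivalence of~\eqref{eq:smveqv} cannot in general be upgraded to a filtered equivalence $\ctxLbbpp\sim\ctxLbb$. The 2A case, by contrast, is direct because the filtered factor $\sfblbg$ enters through $\chp$ in the ``favourable'' direction, so all the required contractions are genuinely filtered.
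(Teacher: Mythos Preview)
Your overall strategy---localising the move to the two leftmost strands and reducing to a partial one-strand closure of the elementary crossing---matches the paper exactly; the paper packages this via the $(1,1)$-tangle complex $\ctxtLbb$ and the key Lemma~\ref{lm:heqlm}. But your account of the 2A cancellation is inverted, and this is a genuine gap.

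The partial closure of $\sfparl$ is not a single rank-one module: by~\eqref{eq:qusoo} it is $\sverlxyt\otimes\stashf(\Qz\oplus\saqt\,\Qz)$, two copies differing by $\saqt$. The partial closure of $\sfvirt$ is \emph{not} filtered-contractible; by~\eqref{eq:qusot} it is precisely $\stashf\,\sverlxyt$, and this is the piece that \emph{survives}. What actually happens is that after closure the saddle $\vsdm$ inside the cone~\eqref{eq:fdgel} for $\sfblbg$ becomes homotopically zero, so the closed $\sfblbg$ splits; the outer $\chp$ then identifies the incoming $\sfparlcl$ with the $\sfparlcl$ summand coming from that split, contracting both in the outer direction, and what remains is $\sfvirtcl\sim\sfverl$. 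Your proposed mechanism (with $\sfvirtcl$ contractible and the original $\sfparl$ term surviving) would leave the wrong module and the wrong shifts.

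For 2B your description is in the right spirit but underspecified. The precise statement (second relation of~\eqref{eq:heqlm}) is that $\sfbrdmcl$ is filtered homotopy equivalent to an outer cone whose first term is $A$ with $A=\boxed{\sha\,\sfverl\xrightarrow{\;\xId\;}\shs\,\sfverl}$; this $A$ is not filtered-contractible (the identity shifts filtration by one unit), but it is contractible once filtration is forgotten, so $\Hmint(A\otimes -)=0$ and only the $\sfverl$ term of the cone survives. This pinpoints the obstruction more cleanly than your appeal to exactness of $\vsdp$ against the Hochschild differential.
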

\begin{remark}
In view of Remark~\ref{rm:sqimp}, the first equivalence of~\eqref{eq:smveqv} is stronger than the second equivalence.
\end{remark}

\begin{proof}[Proof of Theorem~\ref{th:mrkmvt}]
Denote by $\xtLbb$ the diagram of a (1,1)-tangle resulting from the circular closure of all strands of $\brwb$ except the first strand. Define the corresponding complex $\ctxtLbb$ similar to \ex{eq:cmplb}:
\[
\ctxtLbbxyo =
\stashf^{\bstr-1}
\left(
\ctbrwbbxy \otdr_{\Qv{\bfx',\bfy'}}
\xctvbyxp{\xIdv{\bstr-1}}
\right),
\]
where $\bfxp = x_2,\ldots, x_{\bstr}$ and $\bfyp$ is defined similarly.
Also
for any 2-strand diagram $\zdgrfv{\lncf}{5}$ we define the complex of its partial (1-strand) closure as a  derived tensor product similar to~\eqref{eq:cplcls}:
\begin{equation}
\label{eq:dfprcl}
\sfdgrclxyt =
\stashf
\Bigl(
\sfdgrfxy
\otdr_{\Qv{x_1,y_1}}
\sfverlbxyo
\Bigr).
\end{equation}
Now it is easy to see that the complexes of the \brwd s~\eqref{eq:mrkmts} can be presented as derived tensor products
\begin{align}
\label{eq:brtlo}
\ctxLbbp
&\cong
\stashf
\left(
\sfbrdcpyxt \otdr_{\Qv{x_2,y_2}}\ctxtLbbxyt
\right)
,
\\
\label{eq:brtlt}
\ctxLbbpp
&\cong
\stashf
\left(
\sfbrdcmyxt
\otdr_{\Qv{x_2,y_2}}\ctxtLbbxyt
\right),
\end{align}
whereas
\begin{equation}
\label{eq:cltll}
\ctxLbb \cong \stashf
\left(
\sfverlbxyo \otdr_{\Qv{x_1,y_1}}
\ctxtLbbxyo
\right).
\end{equation}
\begin{lemma}
\label{lm:heqlm}
There are \hec s
\begin{equation}
\label{eq:heqlm}
\sfbrdpcl\sim \sfverl,\qquad
\sfbrdmcl\sim
\left[\;
\begin{tikzpicture}[baseline=-0.25em]
\matrix (m) [ssmtt]
{
\sha\,\sfverl
&
\sfverl
\\
\shs\,\sfverl
\\};
\node  (PH) at (m-1-1) [xshift=0.75cm]
{};
\path[->,font=\scriptsize]
(m-1-1) edge node[auto] {$\xId$} (m-2-1)
(PH) edge node[auto] {$\xId$} (m-1-2);
\node (bx) [draw,fit=(m-1-1) (m-2-1) ] {};
\path (bx) node [xshift=-2em] {$\sht\,$};
\end{tikzpicture}
\!\!\!
\right].
\end{equation}
\end{lemma}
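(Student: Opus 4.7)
The plan is to apply the partial closure functor defined in~\eqref{eq:dfprcl} to the filtered Rouquier cones~\eqref{eq:rqco} for $\sfbrdp$ and~\eqref{eq:rqct} for $\sfbrdm$, and then simplify the resulting complexes via Koszul-type cancellations, keeping track of the filtered structure throughout. Since derived tensor product commutes with cones, applying the partial closure termwise reduces the problem to computing closures of the elementary bimodules $\sfparl$, $\sfvirt$, $\sfblbb$, $\sfblbg$ and of the morphisms $\chp$, $\chm$, $\vsdp$, $\vsdm$ between them.

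First I would compute the constituent closures by tensoring with the length-one free resolution $[\saqt\,\Qv{x_1,y_1} \xrightarrow{y_1-x_1} \Qv{x_1,y_1}]$ of $\sfverlbxyo$. On one hand, $y_1 - x_1$ acts on $\sfvirt$ as the non-zero-divisor $x_2 - x_1$ (using $y_1 = x_2$), with cokernel $\sfverl$, so $\sfvirt_{cl} \sim \sfverl$. On the other hand, $y_1 - x_1$ acts as zero on $\sfparl$, so $\sfparl_{cl}$ is the split two-term complex $[\saqt\,\sfparl \xrightarrow{0} \sfparl]$, i.e.\ a loop contribution together with its Hochschild shift. The filtered resolutions~\eqref{eq:fdgel} then give $\sfblbg_{cl}$ and $\sfblbb_{cl}$ as cones built from these pieces.

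For the positive crossing, applying closure to~\eqref{eq:rqco} produces a total complex built from $\sfparl_{cl}$ and the two-term $\sfblbg_{cl}$ connected via $\chp_{cl}$. The morphism $\chp$ is the canonical filtered map, and after closure it identifies the matching $\sfparl$-layers appearing in $\sfparl_{cl}$ (source) and in $\sfblbg_{cl} = [\sfvirt_{cl} \to \ssqt\,\sfparl_{cl}]$ (target), producing a filtered acyclic subcomplex. Contracting that subcomplex leaves a single copy of $\sfverl$ at the correct bidegree, yielding the first equivalence of~\eqref{eq:heqlm}. For the negative crossing, the analogous procedure applied to~\eqref{eq:rqct} produces the same kind of complex, but now the cancellation is incomplete: three $\sfverl$-pieces survive, organized into the cone $[\sht(\sha\,\sfverl \xrightarrow{\xId}\shs\,\sfverl) \xrightarrow{\xId} \sfverl]$ displayed in the statement, with the three connecting morphisms all reducing to identity maps (because $\chm$, $\vsdp$, $\vsdm$ act by identity on the common underlying module factor).

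The main obstacle will be the careful verification that the acyclic subcomplexes being contracted are filtered contractible, not merely contractible, so that every step lives in the filtered homotopy category. In particular, one must check that the homotopies used to kill the matching $\sfparl_{cl}$-summands in the positive case are themselves filtered, and that the surviving terms in the negative case sit at the filtration depths that give the stated $\sha$ vs.\ $\shs$ shifts. Tracking the composite shift functors $\stashf$, $\stasaqt$, $\saqt$, $\ssqt$, $\sht$ through the Koszul contractions is the bulk of the bookkeeping, but once the cancellations are identified the filtered verification is mechanical.
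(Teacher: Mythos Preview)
Your plan is essentially the paper's own argument: compute the partial closures of the constituent bimodules by tensoring with the Koszul resolution of the diagonal, observe that $\sfvirt$ closes to $\sfverl$ while $\sfparl$ closes to a split two-term complex, and then simplify the closed Rouquier cones accordingly. The paper carries this out after the change of variables $z=(x_2+y_2)-(x_1+y_1)$, which makes the Koszul cancellation for $\sfvirtcl$ transparent, but your direct computation with $y_1-x_1$ acting as $x_2-x_1$ on $\sfvirt$ amounts to the same thing.

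One point worth sharpening: for the positive crossing, the paper's key sentence is that \emph{after closure the morphism $\vsdm$ becomes homotopically trivial}, so the inner cone $[\sfvirtcl\xrightarrow{\vsdm}\sfparlcl]$ actually splits in the filtered homotopy category. Your phrasing ``$\chp$ identifies the matching $\sfparl$-layers, producing a filtered acyclic subcomplex'' hides this step: the second copy of $\sfparlcl$ sits inside an inner cone, and it is only a direct summand once you know $\vsdm_{cl}\sim 0$. If instead you intend to run Gaussian elimination on the total canonical resolution, that also works and yields the same result, but you should say so explicitly rather than calling the eliminated piece a subcomplex. With that clarification in place the rest of your bookkeeping, including the survival of three filtered copies of $\sfverl$ in the negative case, matches the paper exactly.
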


Replacing the first factor in the tensor product~\eqref{eq:brtlo} with the help of the first \hec\ of~\eqref{eq:heqlm} converts the \rhs of~\eqref{eq:brtlo} into the \rhs of~\eqref{eq:cltll}, thus proving the first \hec\ of~\eqref{eq:smveqv}.

A substitution of the second \hec\ of~\eqref{eq:heqlm} into \ex{eq:brtlt} yields the presentation of $\ctxLbbpp$ as a cone of two tensor products:
%
\begin{equation}
\label{eq:hmeqcn}
\ctxLbbpp
\sim
\stashf
\left[
\sht\left(
A
\otlgt
\right)
\xrightarrow{\;\;\;\;f\otimes\xId\;\;\;\;}
\left(\sverlxyt
\otlgt
\right)
\right]
\end{equation}
where
\[
A =
\begin{tikzpicture}[baseline=-0.25em]
\matrix (m) [ssmto]
{
\left(\sha\;\sverlxyt\right)
&
\left(\shs\;\sverlxyt\right)
\\};
\path[->,font=\scriptsize]
(m-1-1) edge node[auto] {$\xId$} (m-1-2);
\node[draw,fit=(m-1-1) (m-1-2) ] {};
\end{tikzpicture}.
\]
If we forget filtration, then the cone $A$ is contractible, hence
\[
\Hmint\left(A
\otlgt
\right) = 0
\]
and the application of $\Hmint$ to both sides of the \hec~\eqref{eq:hmeqcn} produces the second \hec\ of~\eqref{eq:smveqv}.
\end{proof}

\begin{proof}[Proof of Lemma~\ref{lm:heqlm}]
We compute the derived tensor product in~\eqref{eq:dfprcl} for the \rhs of this presentation by using the \cfres\ for 2-strand graphs and presenting $\sfverlbxyo$ as the diagonal bimodule $\dgMo=\Qxyo/(y_1-x_1)$:
\[
\sfdgrclxyt \cong
\stashf
\Bigl(
\sfcdgrfxy
\otimes_{\Qv{x_1,y_1}}
\dgMo
\Bigr).
\]
Taking the tensor product with $\dgMo$ amounts to setting $x_1=y_1$ and forgetting the structure of a module over $\Qxyo$. After we replace the dummy variables $x_1=y_1$ with another dummy variable  $z=(x_2 + y_2) - (x_1 + y_1)$, the complexes of 1-strand closures of the diagrams $\zparlv{\lncf}{5}$ and $\zvirtv{\lncf}{5}$ take the form (\cf \ex{eq:stkzrdo})
\begin{eqnarray}
\label{eq:qusoo}
\sfparlclxyt
& \simeq &
\stashf
\left(\dgbxyt \otimes
\begin{tikzpicture}[baseline=-0.25em]
\matrix (m) [ssmt]
{
\saqt\Qz
&
\Qz
\\};
\path[->,font=\scriptsize]
(m-1-1) edge node[auto] {$0$} (m-1-2);
\node[draw,fit=(m-1-1) (m-1-2) ] {};
\end{tikzpicture}
\;
\right)
\simeq
\sverlxyt\otimes\stashf\bigl(\Qclcz\bigr)
\\
\label{eq:qusot}
\sfvirtclxyt
& \simeq &
\stashf
\left(\dgbxyt \otimes
\begin{tikzpicture}[baseline=-0.25em]
\matrix (m) [ssmt]
{
\saqt\Qz
&
\Qz
\\};
\path[->,font=\scriptsize]
(m-1-1) edge node[auto] {$z$} (m-1-2);
\node[draw,fit=(m-1-1) (m-1-2) ] {};
\end{tikzpicture}
\;
\right)
\simeq
\stashf\;
\sverlxyt
\end{eqnarray}
In the last line we used a splitting $\Qz \cong \Qzz \oplus \zQz$, which isolates a contractible subcomplex:
\[
\begin{tikzpicture}[baseline=-0.25em]
\matrix (m) [ssmt]
{
\saqt\Qz
&
\Qz
\\};
\path[->,font=\scriptsize]
(m-1-1) edge node[auto] {$z$} (m-1-2);
\node[draw,fit=(m-1-1) (m-1-2) ] {};
\end{tikzpicture}
\cong
\Qzz \oplus
\begin{tikzpicture}[baseline=-0.25em]
\matrix (m) [ssmt]
{
\saqt\Qz
&
\zQz
\\};
\path[->,font=\scriptsize]
(m-1-1) edge node[auto] {$\xId$} (m-1-2);
\node[draw,fit=(m-1-1) (m-1-2) ] {};
\end{tikzpicture}
\sim\Qzz\cong\IQ.
\]

Now we apply the 1-strand closures to the definitions~\eqref{eq:rqco} and~\eqref{eq:rqct}. We begin with the complex~\eqref{eq:rqco}. After the closure, the morphism $\vsdm$ becomes homotopically trivial, hence
\begin{eqnarray}
\nonumber
\sfbrdpcl
& \cong &
\stasaqt
\left[
\begin{tikzpicture}[baseline=-0.25em]
\matrix (m) [ssmth]
{
&
(\stat\ssqt)^{-1}\sfvirtcl
\\
\sfparlcl
&
\stati\sfparlcl
\\};
\node  (PH) at (m-2-2) [xshift=-1.75cm] {};
\path[->,font=\scriptsize]
(m-1-2) edge node[auto] {$0$} (m-2-2)
(m-2-1) edge node[auto] {$\xId$} (PH);
\node[draw,fit=(m-1-2) (m-2-2) ] {};
\end{tikzpicture}
\;\;
\right]
\\
\nonumber
& \cong &
\xsht^{-1}\sfvirtcl \oplus
 \xsht\saqt
 \;
 \left[
\begin{tikzpicture}[baseline=-0.25em]
\matrix (m) [ssmtf]
{
\sfparlcl
&
\stati\sfparlcl
\\};
\path[->,font=\scriptsize]
(m-1-1) edge node[auto] {$\xId$} (m-1-2);
\end{tikzpicture}
\right]
\sim\sfverl
\end{eqnarray}
The last \hec\ is due to the contractibility of the complex in square brackets, and we also used the \qiso~\eqref{eq:qusot}. Thus we proved the first relation of~\eqref{eq:heqlm}.

Now we apply closure to the complex~\eqref{eq:rqct}:
\[
\sfbrdmclxyt \cong \sverlxyt\otimes B,
\]
where
\begin{eqnarray}
\nonumber
B
& \cong &
\stasaqti
\left[\;
\begin{tikzpicture}[baseline=-0.25em,ampre]
\matrix (m) [ssmth]
{
\Qclczbr
\&
\xsht\Qclczbr
\\
\ssqt\,\Qzzbr
\&
\\};
\node  (PH) at (m-1-1) [xshift=+1.75cm] {};
\path[->,font=\scriptsize]
(PH) edge node[auto] {$\xId$} (m-1-2)
(m-1-1) edge node[auto] {($\begin{smallmatrix} 1 & 0 \end{smallmatrix})$} (m-2-1);
\node (bx) [draw,fit=(m-1-1) (m-2-1) ] {};
\path (bx) node [xshift=-5em] {$\xsht\stat$};
\end{tikzpicture}
\;\;
\right]
\\
\nonumber
& \cong &
\left[\;
\begin{tikzpicture}[baseline=-0.25em]
\matrix (m) [ssmts]
{\sha\,\Qzz
&
\Qzz
\\
\shs\,\Qzz
\\};
\node (PH) at (m-1-1) [xshift=+2.75em]{};
\path[->,font=\scriptsize]
(PH) edge node[auto] {$\xId$} (m-1-2)
(m-1-1) edge node[auto] {$\xId$} (m-2-1);
\node (bx) [draw, fit=(m-1-1) (m-2-1) ] {};
\path (bx) node [xshift=-3.5em] {$\sht$};
\end{tikzpicture}
\right]
\\
&&\quad
\oplus
\left[
\begin{tikzpicture}[baseline=-0.25em]
\matrix (m) [ssmts]
{
\stat
\bigl(\zQz\oplus
\saqt^{-1}
\Qz
\bigr)
&
\bigl(\zQz\oplus
\saqt^{-1}
\Qz
\bigr)
\\
};
\path[->,font=\scriptsize]
(m-1-1) edge node[auto] {$\xId$} (m-1-2);
\end{tikzpicture}
\right]
\end{eqnarray}
Since the complex in the last line is contractible, the second \hec\ of~\eqref{eq:heqlm} follows.
\end{proof}

\section{Filtered homology of two-strand torus braids and links}
\label{s:fhtbr}

Denote $\ysgmm= \zbrdmv{\lncf}{5}$.
\begin{theorem}
\label{th:fhtbr}
For $n\geq 0$, the complex of the \brwd\ $\ysgmmn=(\ysgmm)^{n}$ has a form
\begin{align}
\ctsgmtn &\sim
(\stasaqt)^{-2n}\stat
\left[
\underbrace{
\stot^{2n-1}\sfblbg\xrightarrow{\xpsip}
\cdots
\xrightarrow{\xpsip}
\stot^2\sfblbb
\xrightarrow{\xpsim}
\stot\sfblbg
\xrightarrow{\xpsip}
\sfblbb
}_{2n}
\xrightarrow{\chm}
\stat^{-1}\sfparl
\right]
\\
\ctsgmtno &\sim
(\stasaqt)^{-(2n+1)}\stat
\left[
\underbrace{
\stot^{2n}\sfblbb\xrightarrow{\xpsim}
\cdots
\xrightarrow{\xpsim}
\stot\sfblbg
\xrightarrow{\xpsip}
\sfblbb
}_{2n+1}
\xrightarrow{\chm}
\stat^{-1}\sfparl
\right]
\end{align}
where $\stot = \stat\ssqt$.
\end{theorem}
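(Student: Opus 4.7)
My plan is to proceed by induction on $n$, establishing both formulas simultaneously through the single recursion
\[
\text{(complex for }\ysgmm^{n+1}\text{)}\;\sim\;\text{(complex for }\ysgmm^{n}\text{)}\otimes \sfbrdm.
\]
The base cases are $\ysgmm^{0}=\sfparl$ (the empty blob chain, matching the even formula at $n=0$) and $\ysgmm^{1}=\sfbrdm$, given by \eqref{eq:rqct} (matching the odd formula at $n=0$).

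For the inductive step, I will insert the cone presentation $\sfbrdm=\stasaqti[\stat\,\sfblbb\xrightarrow{\chm}\sfparl]$ and display the complex for $\ysgmm^{n+1}$ as a two-term cone whose bottom entry is the complex for $\ysgmm^{n}$ (already in the desired form) and whose top entry is that complex tensored on the right with $\sfblbb$. Since by the inductive hypothesis this complex is a linear chain of $\sfblbb$'s and $\sfblbg$'s capped off with $\sfparl$, the tensor product splits term-wise via the two decompositions already proved in the paper: Lemma~\ref{lm:decdblbb} gives $\sfblbblb\cong\sfblbb\oplus\ssqt\,\sfblbg$, and Lemma~\ref{lm:splto} gives $\sfblgblb\cong\sfblbg\oplus\ssqt\,\sfblbb$. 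The commutative squares in both lemmas identify the horizontal differentials $\chm\otimes\xId$, $\xId\otimes\chm$ and the vertical filtered morphisms under the splittings; in particular, the morphism connecting the two parallel chains produced by the splitting is exactly $\xpsim$ or $\xpsip$, as appropriate.

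After substitution, the top component of the cone becomes two parallel shifted copies of the previous chain connected mostly by identity maps. Collapsing the contractible $X\xrightarrow{\xId}X$ subcomplexes leaves a single linear chain identical to the old one except that one new blob is attached at the top, connected to the previous top blob by $\xpsip$ (if the new blob is $\sfblbg$) or $\xpsim$ (if the new blob is $\sfblbb$); the parity of $n+1$ dictates which. The outer factor $\stasaqti$ from the cone and the $\ssqt$ from each splitting, together with the identity $\stot=\stat\ssqt$, combine to reproduce the prefactor $(\stasaqt)^{-(n+1)}\stat$ and the internal shifts $\stot^{k}$ on individual blobs claimed in the theorem.

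The main obstacle will be the combinatorial bookkeeping at the head of the chain: verifying that after all cancellations exactly one copy of $\xpsim$ or $\xpsip$ survives as the new top arrow, with the correct direction. This is not a conceptual difficulty but requires carefully matching the remaining edges of the decomposed cone against the filtered identifications in the commutative squares of Lemmas~\ref{lm:splto} and~\ref{lm:decdblbb}; some additional care is needed because both length-one edges and length-two composite arrows appear in the iterated cone that arises from the splittings.
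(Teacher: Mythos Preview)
Your approach is essentially the paper's: both argue by induction on $n$, with the same base cases, and both reduce the inductive step to understanding what tensoring the known chain with one more copy of $\sfbrdm$ does. The paper packages this step as a separate lemma stating that $\sfbrdm\otimes\sfblbb\sim\sfblbg$ and $\sfbrdm\otimes\sfblbg\sim\sfblbb$ (with shifts) together with a commutative diagram that tracks $\xId\otimes\xpsim$, $\xId\otimes\xpsip$ and $\chm\otimes\xId$; the proof of that lemma is left to the reader. Your plan is effectively an attempt to prove this lemma inline via the splittings of Lemmas~\ref{lm:splto} and~\ref{lm:decdblbb}.

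There is one genuine gap in your outline. Lemmas~\ref{lm:splto} and~\ref{lm:decdblbb} identify only the morphisms $\chm\otimes\xId$, $\xId\otimes\chm$ and $\chp\otimes\xId$ under the splittings of $\sfblbblb$ and $\sfblgblb$. They say nothing directly about how $\xpsim\otimes\xId$ and $\xpsip\otimes\xId$ act, and those are precisely the differentials in the top row of your cone once you tensor the inductive chain with $\sfblbb$. So ``the commutative squares in both lemmas identify the horizontal differentials'' is an overstatement: you still have to compute, for instance, the map $\sfblbg\oplus\ssqt\,\sfblbb\to\sfblbb\oplus\ssqt\,\sfblbg$ induced by $\xpsip\otimes\xId$ under the two splittings, and check it is upper-triangular with $\xpsip$ and $\xpsim$ on the diagonal. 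This is a short explicit calculation with the multiplication maps $\ym\pm\xm$ against the change-of-basis matrices in the diagrams~\eqref{eq:dcmpo} and~\eqref{eq:dcmpt}, but it is new work not contained in the lemmas you cite; it is exactly the content of the auxiliary lemma the paper states after the theorem. Once you supply that computation, your cancellation argument goes through and the two proofs coincide.
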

\begin{proof}
This theorem is proved by induction over $n$. It obviously holds for $n=0$ and for $n=1$. The step of induction is proved with the help of the following lemma whose proof we leave for the readers.
\end{proof}
\begin{lemma}
There are \hec s
\begin{equation}
\label{eq:twhecs}
\sfbrmblb \sim
\stashf
\sfblbg,\qquad
\sfbrmblg \sim
\stashf
\sfblbb.
\end{equation}
which make the following diagram commutative:
\begin{equation*}
\begin{tikzcd}
\sfbrmblb
\arrow{d}{\sim}
\arrow{r}{\xId\otimes\xpsim}
&
\sfbrmblg
\arrow{d}{\sim}
\arrow{r}{\xId\otimes\xpsip}
&
\sfbrmblb
\arrow{d}{sim}
\arrow{r}{\chm\otimes\xId}
&
\sfbrmpar
\arrow{d}{\cong}
\\
\sfblbg
\arrow{r}{\xpsip}
&
\sfblbb
\arrow{r}{\xpsim}
&
\sfblbg
\arrow{r}{\xthet}
&
\sfbrdm
\end{tikzcd}
\end{equation*}
where $\xthet$ is the morphism
\[
\begin{tikzpicture}[baseline=-0.25em]
\matrix (m) [ssmt]
{
\sfblbg
&
\sfblbb
\\
&
\sfparl
\\};
\path[->,font=\scriptsize]
(m-1-1) edge node[auto] {$\xpsip$} ([xshift=-1.75em] m-1-2)
(m-1-2) edge node[auto] {$\chm$} (m-2-2);
\node (P1) [fit=(m-1-2) (m-2-2), left delimiter={[},right delimiter={]},inner sep=-0.1cm ] {};
\path (P1) node [xshift=3.5em] {$\cong \sfbrdm$};
\end{tikzpicture}
\]
\end{lemma}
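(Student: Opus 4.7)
The plan is to expand each $\sfbrm(\cdot)$ via the cone presentation of $\sfbrdm$ in \eqref{eq:rqct}, apply the splitting of the relevant double-blob bimodule, and Gaussian-eliminate a filtered-contractible subcomplex. Explicitly, \eqref{eq:rqct} gives
\[
\sfbrmblb \cong \stasaqti\bigl[\stat\,\sfblbblb \xrightarrow{\chm\otimes\xId}\sfblbb\bigr],
\]
since $\sfparl\otimes\sfblbb\cong\sfblbb$. Lemma~\ref{lm:decdblbb} provides $\sfblbblb\cong\sfblbb\oplus\ssqt\,\sfblbg$ and identifies $\chm\otimes\xId$ with $(\xId,\xpsip)$ under this splitting. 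A standard Gaussian elimination then splits off the filtered-contractible piece $\stat\sfblbb\xrightarrow{\xId}\sfblbb$, leaving only $\stat\ssqt\,\sfblbg$; collecting shifts yields $\sfbrmblb\sim\stashf\,\sfblbg$. An entirely analogous computation, using Lemma~\ref{lm:splto}'s splitting $\sfblbblg\cong\sfblbg\oplus\ssqt\,\sfblbb$ with differential $(\xId,\xpsim)$, gives $\sfbrmblg\sim\stashf\,\sfblbb$.

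For the commutative diagram, we trace each horizontal arrow through these homotopy equivalences. Gaussian elimination acts as a projection onto the surviving summand of the splitting, so the map induced on $\stashf\,\sfblbg$ (resp.\ $\stashf\,\sfblbb$) is obtained by restricting the original morphism to the $\ssqt\sfblbg$ (resp.\ $\ssqt\sfblbb$) component and reading off its image in the target summand. For the first square, $\xId\otimes\xpsim$ takes $\sfblbblb$ into $\sfblbblg$; combining the explicit splitting isomorphisms from~\eqref{eq:dcmpo} and~\eqref{eq:dcmpt} with the action of $\xpsim$ as multiplication by $\ym-\xm$ shows that the induced map $\sfblbg\to\sfblbb$ agrees with $\xpsip$ (multiplication by $\ym+\xm$). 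The second square is symmetric. For the third, $\chm\otimes\xId:\sfbrmblb\to\sfbrmpar$ restricted to the surviving $\sfblbg$ summand lands in the $\stat\sfblbb$-position of $\sfbrdm$ via $\xpsip$, which is precisely the definition of $\xthet$.

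The main obstacle is combinatorial rather than conceptual: keeping consistent track of signs and grading shifts through the sequence of splittings and Gaussian eliminations. The change-of-basis matrices in diagrams~\eqref{eq:dcmpo} and~\eqref{eq:dcmpt} introduce sign flips (in particular the $\bigl(\begin{smallmatrix}1&-1\\0&\phantom{-}1\end{smallmatrix}\bigr)$ block), and these signs propagate into the induced maps in the bottom row. Verifying that the end result is the unadorned $\xpsip,\xpsim,\xthet$ rather than their negatives requires a uniform sign convention across the two splittings, and the shift bookkeeping — most delicately the identification $\stasaqti\cdot\stat\ssqt=\stashf$ after restriction to the surviving summand — must be done in the same normalization as in \eqref{eq:rqct} and \eqref{eq:fdgel}.
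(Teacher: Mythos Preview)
The paper does not actually prove this lemma: immediately after stating it, the authors write ``whose proof we leave for the readers.'' So there is no argument in the paper to compare against.

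Your approach is the natural one and is clearly what the authors have in mind: expand $\sfbrdm$ as the cone \eqref{eq:rqct}, tensor, apply the double-blob splittings of Lemmas~\ref{lm:splto} and~\ref{lm:decdblbb} (which also identify $\chm\otimes\xId$ with $(\xId,\xpsip)$ resp.\ $(\xId,\xpsim)$), and Gaussian-eliminate the filtered-contractible $\xId$ piece. The shift bookkeeping $\stasaqti\cdot\stat\cdot\ssqt=\stashf$ is correct. Your treatment of the commutative squares is sketched rather than fully executed, but the method---restrict to the surviving summand and read off the induced map using the explicit change-of-basis matrices in \eqref{eq:dcmpo} and \eqref{eq:dcmpt}---is exactly right, and your honest flagging of the sign/shift bookkeeping as the only real labor is appropriate.
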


\begin{theorem}
\label{th:fhmtbr}
The \ftghm\ homology of the circular closure of the \brwd\ $\ysgmmn$ is
\begin{gather*}
\Htg(\xLsgmtno)
 \cong (\stashf\saqt)^{-2n}\Htg(\bigcirc)\otimes
\left(\IQ\oplus
\stat\bigoplus_{k=0}^{n-1}\stot^{2k+1}\bigl(\ssqt\IQ\oplus\saqt^{-1}\IQ\bigr)
\right),
\\[2\jot]
\Htg(\xLsgmtn)  \cong
(\stasaqt)^{-1}\Htg(\xLsgmtnmo) \oplus \stashf^{2n-1} \saqt^{-1}\stat\Htg(\bigcirc)\otimes\bigl(
\saqt\ssqt\Qz\oplus\ssqt\Qz\oplus\IQ \bigr).
\end{gather*}
In the latter formula it is assumed that $n\geq 1$.
\end{theorem}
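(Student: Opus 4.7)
The plan is to combine Theorem~\ref{th:fhtbr}, which presents the complexes $\ctsgmtn$ and $\ctsgmtno$ explicitly as linear complexes of the Soergel bimodules $\sfblbg$, $\sfblbb$, $\sfparl$, with a direct computation of the Hochschild homology of the circular closure of each of these bimodules, and then to compute the outer homology of the resulting complex of complexes.

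First I would compute the Hochschild homology of each closed constituent. Using the method from the proof of Lemma~\ref{lm:heqlm}---representing the bimodule by its canonical Koszul resolution and tensoring with the diagonal bimodule so as to set $y_i=x_i$---the closure of $\sfparl$ contributes, up to shifts, a tensor product $\Htg(\bigcirc)^{\otimes 2}$ of two unknots, while the closure of $\sfvirt$ contributes a single $\Htg(\bigcirc)$. The closures of $\sfblbb$ and $\sfblbg$ are then obtained as multi-cones from the filtered exact sequences~\eqref{eq:exsq} together with the filtered resolutions~\eqref{eq:fdgel}, and their Hochschild homologies split as filtered sums of pieces coming from the closures of $\sfparl$ and $\sfvirt$.

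Second, I would determine the induced action of the morphisms $\xpsip$, $\xpsim$, and $\chm$ on these Hochschild homologies. Since $\xpsip$ and $\xpsim$ act, through the factorisations~\eqref{eq:cmfct}, by multiplication by $\ym\pm\xm$, and since the diagonal bimodule identifies $y_i=x_i$, these morphisms become null-homotopic on the Hochschild homology of the summands they connect---in analogy with the vanishing of $\vsdm$ after closure observed in the proof of Lemma~\ref{lm:heqlm}. Consequently the outer complex of the Hochschild homology of the closure splits into a direct sum: one short piece at the far end containing the arrow $\chm$ into $\stat^{-1}\sfparl$, plus $n$ further length-two pieces indexed by $k=0,\ldots,n-1$, each built from one consecutive pair $(\sfblbg,\sfblbb)$ in the chain of Theorem~\ref{th:fhtbr}. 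Evaluating each piece with the shifts dictated by Theorem~\ref{th:fhtbr} and assembling them yields the first formula.

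For the inductive formula, I would compare $\ctsgmtn$ with the complex given by Theorem~\ref{th:fhtbr} for the $(2n-1)$-st power. The former is obtained from the latter by prepending one extra term $\stot^{2n-1}\sfblbg$ connected via $\xpsip$ to the head $\stot^{2n-2}\sfblbb$, and by rescaling the overall prefactor by $(\stasaqt)^{-1}$. Applying $\Hmint$ to the closure, the un-prepended part reproduces $(\stasaqt)^{-1}\Htg(\xLsgmtnmo)$, while the prepended term contributes the extra summand, whose three pieces $\saqt\ssqt\Qz \oplus \ssqt\Qz \oplus \IQ$ come from the Hochschild homology of the closure of $\sfblbg$: two pieces from the filtration layers of $\sfblbg$ via~\eqref{eq:exsq}, and one piece from the residual interaction with the adjacent $\stot^{2n-2}\sfblbb$ surviving after $\xpsip$ is made null-homotopic by the closure.

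The main obstacle is to verify the null-homotopy of $\xpsip$ and $\xpsim$ after closure as \emph{filtered} morphisms, and to track all four shift functors $\sht$, $\sha$, $\shq$, $\shs$ through the entire reduction. Because $\KQfb$ is not triangulated in the usual sense, one cannot freely split the complex by contracting acyclic subcomplexes; instead one must exhibit explicit homotopies that respect the filtration, in the spirit of the proofs of Theorems~\ref{th:R2} and~\ref{th:mrkmvt}.
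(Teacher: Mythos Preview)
Your overall plan---apply Theorem~\ref{th:fhtbr}, compute the Hochschild homology of the closure of each bimodule in the chain, and then track the induced outer differentials---is exactly the paper's route. The paper packages this as a single lemma that computes $\HHm$ of the sequence $\sfblbb\xrightarrow{\xpsim}\sfblbg\xrightarrow{\xpsip}\sfblbb\xrightarrow{\chm}\sfparl$ together with the induced maps, and then reads off both formulas from that lemma.

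There is, however, a real error in your key step. The claim that both $\xpsip$ and $\xpsim$ become null-homotopic after closure is false, and your heuristic (``multiplication by $\ym\pm\xm$ vanishes once $y_i=x_i$'') is wrong on two counts. First, even taken naively it kills only $\xpsim$, since $\ym-\xm\to 0$ while $\ym+\xm\to 2\xm\neq 0$; so it would predict that $\xpsim$ vanishes and $\xpsip$ survives. Second, that naive prediction is \emph{backwards}. One must compute on the filtered resolutions, not on the bare bimodules: in the diagrams defining $\xpsim$ and $\xpsip$, the morphism $\xpsim$ is the identity between the $\sfparl$-layers of the two cones, while $\xpsip$ is the identity between the $\sfvirt$-layers. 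After closure the $\sfparl$-layer carries two $\Qz$ summands of Hochschild homology while the $\sfvirt$-layer contributes only a single $\IQ$. Working out the induced maps one finds that $\xpsip$ is the zero map on $\HHm$, whereas $\xpsim$ acts by $z$ on one $\Qz$ summand and by $1$ on the other; $\chm$ has the same nonzero pattern. This is precisely the content of the paper's lemma.

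Consequently your decomposition is shifted by one slot: the outer complex breaks at the $\xpsip$-arrows, not at the $\xpsim$-arrows, so the length-two pieces are $[\sfblbb\xrightarrow{\xpsim}\sfblbg]$ (and one piece $[\sfblbb\xrightarrow{\chm}\sfparl]$ at the end), not pairs $(\sfblbg,\sfblbb)$. With the correct pairing each two-term piece has trivial kernel and cokernel $\ssqt\IQ\oplus\saqt^{-1}\IQ$ (up to the global shift), matching the first formula. Your argument for the second, inductive formula happens to land correctly, because prepending $\sfblbg$ attaches via $\xpsip$, which genuinely vanishes on $\HHm$; the three-piece contribution $\saqt\ssqt\Qz\oplus\ssqt\Qz\oplus\IQ$ is then simply $\HHm$ of the closed $\sfblbg$, and no ``residual interaction'' with the adjacent $\sfblbb$ is needed.
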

This theorem is an easy corollary of the following lemma:
\begin{lemma}
An application of the \Hchh\ functor $\HHm$  to
the sequence of morphisms
\begin{equation}
\label{eq:sqfr}
\begin{tikzcd}
\sfblbb
\arrow{r}{\xpsim}
&
\sfblbg
\arrow{r}{\xpsip}
&
\sfblbb
\arrow{r}{\chm}
&
\sfparl
\end{tikzcd}
\end{equation}
apart from the common factor $\stashf^{-1}\Htg(\bigcirc)$, where
\[
\Htg(\bigcirc) = \stashf\Bigl (\saqt\Qw \oplus \Qw \Bigr)
\]
is the homology of the unknot diagram,
yields the following sequence of maps between graded, filtered vector spaces
\begin{equation}
\label{eq:chvs}
\begin{tikzpicture}[commutative diagrams/every diagram]
\matrix (m) [smmt, column sep=2.75em]
{
\saqt\shq^2\,
\Qz
&
\saqt\ssqt\,
\Qz
&
\saqt\shq^2\,
\Qz
&
\saqt\,
\Qz
\\
\Qz
&
\ssqt\,
\Qz
&
\Qz
&
\Qz
\\
&
\IQ
\\};
\path[->,font=\scriptsize]
(m-1-1) edge node[auto]{$z$} (m-1-2)
(m-1-3) edge node[auto]{$z$} (m-1-4)
(m-2-1) edge node[auto]{$1$} (m-2-2)
(m-2-3) edge node[auto]{$1$} (m-2-4)
;
\path (m-1-1) -- (m-2-1) node[midway] {$\oplus$};
\path (m-1-2) -- (m-2-2) node[midway] {$\oplus$};
\path (m-1-3) -- (m-2-3) node[midway] {$\oplus$};
\path (m-1-4) -- (m-2-4) node[midway] {$\oplus$};
\path (m-2-2) -- (m-3-2) node[midway] {$\oplus$};
\end{tikzpicture}
\end{equation}
where $\xdmz$ and $\xdmw$ are `dummy' variables, and each column in the diagram represents the Hochschild homology of the corresponding module in the sequence~\eqref{eq:sqfr}.
\end{lemma}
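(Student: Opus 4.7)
The strategy is to compute the Hochschild homology of each of the four bimodules in the sequence~\eqref{eq:sqfr} by using the filtered cone presentations~\eqref{eq:fdgel} to reduce to the Hochschild homology of $\sfparl$ and $\sfvirt$, and then to identify how the morphisms $\xpsim$, $\xpsip$, $\chm$ descend to these traces. This is a full-closure analogue of the partial-closure calculations~\eqref{eq:qusoo}--\eqref{eq:qusot} that were carried out in the proof of Lemma~\ref{lm:heqlm}.

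First, I would compute $\HHm(\sfparl)$ and $\HHm(\sfvirt)$ by tensoring their canonical Koszul resolutions~\eqref{eq:cnrsvK} with the 2-strand diagonal bimodule over $\Qbxy$. Factoring each Koszul complex into its $(\xp,\yp)$-part and $(\xm,\ym)$-part as in~\eqref{eq:cmfct} and imposing the full-closure substitution $y_i = x_i$, the $(\xp,\yp)$-part contributes the common factor $\stashf^{-1}\Htg(\bigcirc)$ corresponding to one of the two trace circles. The $(\xm,\ym)$-part has zero differential for $\sfparl$, producing a residual polynomial variable $z$ together with a second circle generator (this accounts for column~4, $\saqt\Qz\oplus\Qz$), while for $\sfvirt$ the $(\xm,\ym)$-differential becomes proportional to $\xm$ and contracts one of the generators, supplying the building block used in the other columns.

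Second, I would apply $\HHm$ to the filtered cones $\sfblbb\simeq[\sfparl\xrightarrow{\vsdp}\ssqt\sfvirt]$ and $\sfblbg\simeq[\sfvirt\xrightarrow{\vsdm}\ssqt\sfparl]$ and pass to cones of the Hochschild complexes from the previous step. For $\sfblbb$, the morphism $\vsdp$ is multiplication by $\ym+\xm$ on the $(\xm,\ym)$-part; after closure one factor of the resulting cone is acyclic and contracts, leaving $\saqt\shq^2\Qz\oplus\Qz$ for columns~1 and~3. For $\sfblbg$ the dual reduction via $\vsdm$ produces an analogous contraction but leaves behind a residual $\IQ$ summand coming from the homological-degree zero piece of the $\sfparl$ top quotient of the filtration, which is not hit by $\vsdm$; this gives $\saqt\ssqt\Qz\oplus\ssqt\Qz\oplus\IQ$ for column~2.

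Finally, I would track how the morphisms act on these complexes. The maps $\xpsim$, $\xpsip$ are multiplication by $\ym-\xm$ and $\ym+\xm$ respectively, so after closure each restricts to multiplication by $z$ on the appropriate $\saqt$-shifted summand and to the identity on the other, reproducing the $z$-labels and $1$-labels of~\eqref{eq:chvs}; the canonical projection $\chm$ descends likewise to multiplication by $z$ on the top row and the identity on the middle row. The main technical obstacle is the bookkeeping of combined quantum, filtration, and homological shifts through the iterated cone contractions, together with verifying that the residual $\IQ$ summand in column~2 is genuinely annihilated by both adjacent differentials; this last point uses the splitting structure of Lemma~\ref{lm:splto} after closure to ensure that the off-diagonal cone differentials do not couple this $\IQ$ to the $z$-multiplication tower above it.
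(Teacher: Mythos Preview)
Your approach is the paper's: split each canonical resolution as $\dgbyxp\otimes\sfmdgrf$ via~\eqref{eq:cmfct}, observe that the morphisms of~\eqref{eq:sqfr} act as the identity on the common factor $\dgbyxp$, and then impose the closure relations $\yp=\xp$, $\ym=\xm$ on the chain of $(\xm,\ym)$-parts (the paper records this chain explicitly as the diagram~\eqref{eq:chnmrl}) to read off~\eqref{eq:chvs}.

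One correction to your bookkeeping for column~2: the residual $\IQ$ comes from the $\sfvirt$ piece of $\sfblbg$, which is the filtration-grade-$0$ \emph{quotient}, not from the $\sfparl$ piece (which is the grade-$1$ subobject and contributes the two $\ssqt$-shifted copies of $\Qz$). After closure the internal Koszul differential $\ym+\xm$ on $\sfmvirt$ becomes $2z$, and its cokernel is exactly this $\IQ$. Relatedly, $\vsdp$ and $\vsdm$ are not multiplication operators: as in~\eqref{eq:mcns} they are the identity on one Koszul generator, while the factors $\ym\pm\xm$ live in the internal differentials of $\sfmparl$ and $\sfmvirt$. This does not affect your outline, but it matters when you verify which summands survive and how the filtration sits on them.
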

\begin{proof}[Sketch of the proof]
After we pass from the variables $\bfx$ and $\bfy$ to the variables~\eqref{eq:dfdfv}, the \cres\ of the \bmdl\ of every diagram $\zdgrfv{\lncf}{5}$ appearing in~\eqref{eq:sqfr} splits into the tensor product (over $\IQ$) of the complexes of $\Qv{\xp,\yp}$-modules and complexes of $\Qv{\xm,\ym}$-modules: $\sfndgrf \cong \dgbyxp\otimes \sfmdgrf$ (\cf \ex{eq:cmfct}), so that apart from the common factor $\dgbyxp$, on which the morphisms~\eqref{eq:sqfr} act as identity, the sequence~\eqref{eq:sqfr} becomes
\begin{equation}
\label{eq:chnmrl}
\begin{tikzpicture}[baseline=-0.25em]
\matrix (m) [ssmt, column sep=6em]
{
\sfmblbb
&
\sfmblbg
&
\sfmblbb
&
\sfmparl
\\
\sfmparl
&
\sfmvirt
&
\sfmparl
&
\sfmparl
\\
\sfmvirt
&
\sfmparl
&
\sfmvirt
\\};
\node (P21) at (m-2-1) [xshift=2em] {};
\node (P22) at (m-2-2) [xshift=2em] {};
\node (P23) at (m-2-3) [xshift=2em] {};
\path[->,font=\scriptsize]
(m-1-1) edge node[auto] {$\xpsim$} (m-1-2)
(m-1-2) edge node[auto] {$\xpsip$} (m-1-3)
(m-1-3) edge node[auto] {$\chm$} (m-1-4)
(m-2-1) edge node[auto] {$\vsdp$} (m-3-1)
(m-2-2) edge node[auto] {$\vsdm$} (m-3-2)
(m-2-3) edge node[auto] {$\vsdp$} (m-3-3)
(m-1-1) edge node[auto] {$=$} ([yshift=1.5em] m-2-1)
(m-1-2) edge node[auto] {$=$} ([yshift=1.5em] m-2-2)
(m-1-3) edge node[auto] {$=$} ([yshift=1.5em] m-2-3)
(m-1-4) edge node[auto] {$=$} (m-2-4)
(P21) edge node[auto] {$\xId$} ([xshift=-2.5em] m-3-2)
(P22) edge node[auto] {$\xId$} ([xshift=-2.5em] m-3-3)
(P23) edge node[auto] {$\xId$} ( m-2-4)
;
\node [draw, fit=(m-2-1) (m-3-1) ] {};
\node [draw, fit=(m-2-2) (m-3-2) ] {};
\node [draw, fit=(m-2-3) (m-3-3) ] {};
\end{tikzpicture}
\end{equation}
where the $\Qv{\xm,\ym}$-modules $\sfmparl$ and $\sfmvirt$ are defined by \ex{eq:dfvrtm}, while the homomorphisms $\vsdp$ and $\vsdm$ are defined by \ex{eq:mcns}.
The application of \Hchh\ to the resolutions amounts to taking the quotient over the relation $\xp=\yp$, which transforms the common factor $\dgbyxp$ into $\stashf^{-1}\Htg(\bigcirc)$, as well as over the relation $\xm=\ym$, which transforms the chain of \bmdl s~\eqref{eq:chnmrl} into the chain of vector spaces~\eqref{eq:chvs}.
\end{proof}


\begin{bibdiv}
\begin{biblist}

\bib{Khtgr}{article}
{
author={Khovanov, Mikhail}
title={Triply-graded link homology and Hochschild homology of Soergel bimodules}
journal={International journal of mathematics}
volume={18}
number={8}
year={2007}
pages={869-885}
eprint={arXiv:math.GT/0510265 }
}

\bib{KRO}{article}
{
author={Khovanov, Mikhail}
author={Rozansky, Lev}
title={Virtual crossings, convolutions and a categorifcation of
the \tSOtN\ Kauffman polynomial}
journal={Journal of GGT}
volume={1}
year={2007}
pages={116-214}
eprint={arXiv:math/0701333}
}

\bib{KRpw}{article}
{
author={Khovanov, Mikhail}
author={Rozansky, Lev}
title={Positive half of the Witt algebra acts on triply graded link homology}
eprint={arXiv:1305.1642}
}

\bib{Rou}{article}
{
author={Rouquier, Rapha\"{e}l}
title={Categorification of the braid group}
eprint={arXiv:math.RT/0409593}
}

\bib{Sgl1}{article}
{
author={Soergel, Wolfgang}
title={The combinatorics of Harish-Chandra bimodules}
journal={Journal f\"{u}r die reine und angewandte Mathematik}
volume={429}
year={1992}
pages={49-74}
}

\bib{Sgl2}{article}
{
author={Soergel, Wolfgang},
title={Gradings on representation categories},
conference={
title={Proceedings of the International Congress of Mathematicians}
address={Z\"{u}rich}
date={1994}
},
book={
volume={1,2}
pages={800-806}
publisher={Birkh\"{a}user}
address={Basel}
date={1995}
}
}

\bib{Th}{article}
{
author={Thiel, Anne-Laure}
title={Categorification of the virtual braid groups}
journal={Annales Math\'{e}matiques Blaise Pascal}
volume={18}
year={2011}
number={2}
pages={231-243}
}

\bib{Wag}{article}
{
author={Wagner, Emmanuel}
title={A refinement of Khovanov-Rozansky link homology}
eprint={arXiv:0706.2247}
}

\end{biblist}
\end{bibdiv}

\end{document}
